\def\struckint{\mathop{%
\def\mathpalette##1##2{\mathchoice{##1\displaystyle##2}%
  {##1\textstyle##2}{##1\scriptstyle##2}{##1\scriptscriptstyle##2}}%
\mathpalette
{\vbox\bgroup\baselineskip0pt\lineskiplimit-1000pt\lineskip-1000pt
\halign\bgroup\hfill$}
{##$\hfill\cr{\intop}\cr\diagup\cr\egroup\egroup}%
}\limits}
\newtheorem{theorem}{Theorem}[section]
\newtheorem{lemma}[theorem]{Lemma}
\newtheorem{corollary}[theorem]{Corollary}
\newtheorem{definition}[theorem]{Definition}
\theoremstyle{remark}
\newtheorem{remark}[theorem]{Remark}
\newtheorem{question}[theorem]{Question}
\newcommand{\integers}{\mathbb{Z}}
\DeclareMathOperator{\tr}{tr}
\DeclareMathOperator{\SL}{SL}
\DeclareMathOperator{\Out}{Out}
\DeclareMathOperator{\Inn}{Inn}
\DeclareMathOperator{\Aut}{Aut}
\providecommand{\nch}[1]{\left\langle \left\langle #1 \right\rangle\right\rangle}
\begin{document}
\title{Rigidity of fibering}
\author{Igor Rivin}
\address{Department of Mathematics, Temple University, Philadelphia}
\address{School of Mathematics, Institute for Advanced Study, Princeton}
\email{rivin@temple.edu}
\thanks{I would like to thank Ilya Kapovich for much enlightenment on geometric group theory, D. Arapura for supplying the proof of Theorem \ref{bsa}, Igor Belegradek and Alain Valette on putting me abreast of the recent developments on groups with finite outer automorphism group, Fred Cohen for interesting discussions on different sorts of monodromy, N. Katz and P. Sarnak, who piqued the author's interest in monodromy, and the Institute for Advanced Study for its hospitality, which made this work possible. I would like to thank Igor Belegradek, Tom Church, and David Futer on comments on a previous version of this preprint -- in particular, Belegradek had pointed out the very interesting preprint \cite{churchfarbthib}. I would also like to thank Jim Bryan, Ron Donagi, Andras Stipsicz, and Benson Farb for interesting discussions of the first version of this paper}
\begin{abstract}
Given a manifold $M,$ it is natural to ask in how many ways it fibers
 (we mean fibering in a general way, where the base might be an orbifold -- this could be described as \emph{Seifert fibering})There are group-theoretic obstructions to the existence of even one fibering, and in some cases (such as K\"ahler manifolds or three-dimensional manifolds) the question reduces to a group-theoretic question. In this note we summarize the author's state of knowledge of the subject.
\end{abstract}
\maketitle
\tableofcontents
\section{Introduction}
This note is inspired by the following celebrated theorem of A.~Beauville (\cite{beauville}) and Y.-T.~Siu (\cite{siu87}):
\begin{theorem}[Beauville-Siu]
\label{beauvillesiu}
Let $X$ be a compact K\"ahler manifold and $g \geq 2$ an integer. The $X$ admits a non-constant holomorphic map to some compact Riemann surface of genus $g^\prime \geq g$ if and only if there is a surjective homomorpjsm $h:\pi_1(X) \rightarrow \pi_1(C_g),$ where $\pi_1(C_g)$ is the fundamental group of a compact Riemann surface of genus $g.$
\end{theorem}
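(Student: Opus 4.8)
The plan is to prove the two implications separately; the forward ("only if") direction is elementary topology, while the reverse ("if") direction carries essentially all of the depth and is where the K\"ahler hypothesis and the work of Beauville and Siu enter.

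For the forward direction, suppose $f\colon X \to C$ is a non-constant holomorphic map onto a compact Riemann surface $C$ of genus $g' \geq g$. Replacing $f$ by the first factor $X \to C'$ of its Stein factorization $X \to C' \to C$, I may assume $f$ has connected fibers; since $C' \to C$ is a finite cover of a curve of genus $\geq 2$, Riemann--Hurwitz gives $g(C') \geq g' \geq g$. A proper surjective holomorphic map with connected fibers induces a surjection on fundamental groups, so $f_*\colon \pi_1(X) \to \pi_1(C')$ is onto. Finally, writing $\pi_1(C') = \langle a_1,b_1,\dots,a_{g'},b_{g'} \mid \prod_i [a_i,b_i]\rangle$, the "handle-collapsing" assignment sending $a_i,b_i$ to the corresponding generators of $\pi_1(C_g)$ for $i \leq g$ and to the identity for $i > g$ respects the surface relation and is surjective; composing yields the required $h$.

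For the reverse direction I begin with a surjection $h\colon \pi_1(X)\to\pi_1(C_g)$, $g\geq 2$. Since $C_g$ is aspherical it is a $K(\pi_1(C_g),1)$, so $h$ is induced by a continuous map $\phi\colon X \to C_g$, unique up to homotopy. I equip $C_g$ with its hyperbolic metric; because the target is compact and negatively curved, the Eells--Sampson existence theorem provides a harmonic map $u\colon X \to C_g$ homotopic to $\phi$, so that $u_* = h$ on $\pi_1$. The heart of the matter is then Siu's Bochner--K\"ahler rigidity argument: using the K\"ahler identity on $X$ together with the negativity of the curvature of $C_g$, one integrates Siu's $\partial\bar\partial$-Bochner formula to conclude that $u$ is pluriharmonic, and a rank argument then forces $\bar\partial u = 0$ or $\partial u = 0$, i.e. $u$ is holomorphic or anti-holomorphic. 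The degenerate possibility --- that $u$ factors up to homotopy through a geodesic --- is excluded because $u_*(\pi_1(X)) = \pi_1(C_g)$ is non-abelian. In the anti-holomorphic case I conjugate the complex structure of $C_g$, which does not change the genus. Since $u_* = h$ is surjective, $u$ is non-constant, hence a surjective holomorphic map onto the genus-$g$ surface $C_g$, which is exactly the fibering sought (with $g' = g$).

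The main obstacle is precisely the rigidity step --- showing the harmonic map is holomorphic or anti-holomorphic. This is where the K\"ahler assumption is indispensable: the analogous statement fails for general Riemannian domains, and the genuinely deep input (Siu's vanishing theorem via the refined Bochner formula) is concentrated here. I note that Beauville's alternative route reaches the same conclusion without harmonic maps, by extracting from $h^{*}H^1(C_g,\mathbb{C})$ an isotropic subspace of holomorphic $1$-forms and invoking the Castelnuovo--de Franchis theorem; in that approach the corresponding difficulty is the Hodge-theoretic argument that the pulled-back classes can be realized by genuine elements of $H^{1,0}(X)$.
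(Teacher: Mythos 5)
First, note that the paper does not actually prove this statement: it is quoted as a known theorem with citations to Beauville and Siu, so there is no internal proof to compare yours against; your proposal has to stand on its own.

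Your forward direction is fine (Stein factorization, surjectivity of $\pi_1$ for a proper surjection with connected fibers, Riemann--Hurwitz, and the handle-collapsing surjection $\pi_1(C_{g'})\twoheadrightarrow\pi_1(C_g)$). The reverse direction, however, contains a genuine error at its central step. You claim that Siu's $\partial\bar\partial$-Bochner argument forces the harmonic map $u\colon X\to C_g$ itself to be holomorphic or anti-holomorphic, so that the theorem is realized with $g'=g$ and target $C_g$. This is false for one-dimensional targets: take $X$ to be a \emph{generic} compact Riemann surface of genus $3$ and $g=2$. Then $\pi_1(X)$ surjects onto $\pi_1(C_2)$, but for generic moduli there is no non-constant holomorphic or anti-holomorphic map from $X$ to a fixed genus-$2$ curve (by de Franchis finiteness and a dimension count); the harmonic map in the homotopy class of the surjection is simply not $\pm$-holomorphic. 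The rank dichotomy you invoke (``holomorphic, anti-holomorphic, or factors through a geodesic'') is the correct trichotomy only for higher-rank locally symmetric targets; for a hyperbolic Riemann surface target the correct conclusion of pluriharmonicity is Siu's \emph{factorization} theorem: when $du$ has real rank $2$ somewhere, the level sets of $u$ are analytic, and $u$ factors as $v\circ f$ where $f\colon X\to C'$ is a non-constant holomorphic map onto some compact Riemann surface $C'$ and $v\colon C'\to C_g$ is harmonic. Surjectivity of $h$ then forces $\pi_1(C')\twoheadrightarrow\pi_1(C_g)$, hence $g(C')\geq g$ -- which is exactly why the theorem asserts a map to \emph{some} curve of genus $g'\geq g$ rather than to $C_g$. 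So the missing idea is precisely this foliation/factorization step (or, in Beauville's route that you mention only in passing, the isotropic-subspace argument plus the generalized Castelnuovo--de Franchis theorem); without it the reverse implication does not go through.
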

This results was shown to hold by D.~Kotschick for compact complex surfaces -- see \cite{compactkahler}[Theorem 2.17]
In addition, there is the following ``orbifold'' version  of Theorem \ref{beauvillesiu}:
\begin{theorem}[D. Arapura, \cite{araflow}]
\label{bsa}
Let $X$ be a compact K\"ahler manifold, then $\pi_1(X)$ admits a surjective map on a fundamental group of a compact \emph{hyperbolic} orbifold if and only if $X$ admits a non-constant holomorphic  map to an orbifold of negative Euler characteristic.
\end{theorem}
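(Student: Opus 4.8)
The two implications are of very different character, and I would treat them separately.

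For the ``only if'' direction (holomorphic fibration $\Rightarrow$ surjection), suppose $f\colon X \to \mathcal{O}$ is a non-constant holomorphic map onto an orbifold with $\chi^{\mathrm{orb}}(\mathcal{O}) < 0$. Passing to the Stein factorization I may assume the generic fiber is connected, at which point the orbifold structure on the base is exactly the one recording the multiplicities of the multiple fibers. The standard long-exact-sequence argument for a fibration with connected fibers then shows that $f_*\colon \pi_1(X) \to \pi_1^{\mathrm{orb}}(\mathcal{O})$ is surjective, and $\pi_1^{\mathrm{orb}}(\mathcal{O})$ is precisely the fundamental group of a compact hyperbolic $2$-orbifold (a cocompact Fuchsian group) because $\chi^{\mathrm{orb}}(\mathcal{O}) < 0$. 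This direction requires no Hodge theory.

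The substance is the converse. Suppose $\rho\colon \pi_1(X) \twoheadrightarrow \Gamma$ where $\Gamma = \pi_1^{\mathrm{orb}}(\mathcal{O})$ is a cocompact Fuchsian group, realized as a lattice in $\mathrm{PSL}(2,\mathbb{R}) = \mathrm{Isom}^+(\mathbb{H}^2)$. One route is to reduce to Theorem \ref{beauvillesiu}: by Selberg's lemma $\Gamma$ has a normal, torsion-free, finite-index subgroup $\Gamma_0 = \pi_1(C)$ with $C$ a compact Riemann surface of genus $\geq 2$, corresponding to a finite orbifold cover $C \to \mathcal{O}$ and, via $\rho$, to a finite manifold cover $\widetilde{X} \to X$ equipped with a surjection $\pi_1(\widetilde{X}) \twoheadrightarrow \Gamma_0$. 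Theorem \ref{beauvillesiu} then produces a non-constant holomorphic fibration $\widetilde{f}\colon \widetilde{X} \to C'$ onto a curve of genus $\geq 2$. The crux is descent: I would invoke the finiteness of the set of genus-$\geq 2$ fibrations of a fixed compact K\"ahler manifold, observe that the deck group $G = \Gamma/\Gamma_0$ permutes this finite set and that the fibration attached to the $G$-invariant datum $\rho$ is itself $G$-invariant, and conclude that $\widetilde{f}$ descends to an orbifold fibration $X \to \mathcal{O}'$ with $\chi^{\mathrm{orb}} < 0$.

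I expect the descent step to be the main obstacle in that first route, since matching $C'$ against a quotient of $C$ and controlling the induced orbifold multiplicities is delicate. For this reason I would actually prefer the equivariant-harmonic-map approach, which builds in equivariance from the start and sidesteps descent altogether. As $\Gamma$ is a lattice in the semisimple group $\mathrm{PSL}(2,\mathbb{R})$ it is Zariski dense by the Borel density theorem, so $\rho$ is reductive; by the theory of Corlette, Donaldson, and Labourie there is a $\rho$-equivariant harmonic map $u\colon \widetilde{X} \to \mathbb{H}^2$ from the universal cover. Because $X$ is K\"ahler and the target $\mathbb{H}^2$ has strictly negative (hence nonpositive Hermitian) curvature, the Siu--Sampson Bochner formula forces $u$ to be pluriharmonic and forces $\partial u$ to have complex rank at most one; the rank-one pluriharmonicity makes the kernel foliation a holomorphic foliation whose leaf space is a Riemann surface, so $u$ factors $\rho$-equivariantly as a holomorphic map (or, after conjugating the complex structure on the target, an anti-holomorphic one) onto a curve composed with an immersion into $\mathbb{H}^2$. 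Equivariance then pushes this down to a non-constant holomorphic map $X \to \mathbb{H}^2/\Gamma = \mathcal{O}$, non-constant because $\rho$ has infinite, non-elementary image. Here the genuinely hard analytic input is the Siu--Sampson rigidity yielding pluriharmonicity and the rank bound, together with the verification that the branch data induced on the quotient is exactly the orbifold structure of $\mathcal{O}$.
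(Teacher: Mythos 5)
The paper does not prove this statement: Theorem \ref{bsa} is quoted as Arapura's result from \cite{araflow} (the acknowledgments credit Arapura with supplying the proof), so there is no in-paper argument to compare yours against. Judged on its own terms, your sketch correctly identifies the standard architecture and the genuinely hard inputs. The forward direction is fine once you observe that the intermediate curve of the Stein factorization, carrying the orbifold structure recorded by the multiple fibers, still has negative orbifold Euler characteristic (Riemann--Hurwitz for the finite part of the factorization). For the converse, your second route is essentially the argument in the literature: Corlette--Labourie existence of a $\rho$-equivariant harmonic map (reductivity from Zariski density of the lattice), Siu--Sampson pluriharmonicity for K\"ahler domains and nonpositively curved targets, and a Castelnuovo--de Franchis-type factorization through the leaf space of the kernel foliation; non-constancy and non-degeneracy (rank not everywhere $\leq 1$) follow because a nonelementary image cannot stabilize a point or a geodesic in $\mathbb{H}^2$. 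Your first route should not be counted as a proof: the descent step you flag is exactly where it breaks, since the Beauville--Siu fibration upstairs need not be compatible with the deck group without further argument, and you do not carry it out.

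Two points in the second route need repair. First, the holomorphic map produced by the factorization lands in the leaf space, i.e., in some orbifold Riemann surface $\mathcal{O}'$ through which $\rho$ factors --- \emph{not} in $\mathbb{H}^2/\Gamma$ itself; the induced map $X \rightarrow \mathbb{H}^2/\Gamma$ is harmonic but not holomorphic, so your final sentence asserts something false as written. This is harmless for the theorem, which only asks for \emph{some} target of negative Euler characteristic: $\pi_1^{\mathrm{orb}}(\mathcal{O}')$ surjects onto the nonelementary cocompact Fuchsian group $\Gamma$, which forces $\chi^{\mathrm{orb}}(\mathcal{O}')<0$ --- you should say this explicitly rather than identify $\mathcal{O}'$ with $\mathcal{O}$. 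Second, the verification that the multiple-fiber data on the quotient curve produce a genuine compact hyperbolic orbifold with the required surjection is gestured at but not done; it is routine, but it is part of the statement.
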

These results can be viewed as analogous (in the complex/K\"ahler category) to the classical \emph{Stallings fibration Theorem}, which states that:
\begin{theorem}[Stallings Fibration Theorem, \cite{stallingsfib}]
\label{stallingsthm}
A compact irreducible 3-manifold $M^3$ fibers over $S^1$ if and only if $\pi_1(M^3)$ admits a surjection onto $\mathbb{Z}$ with finitely generated kernel.
\end{theorem}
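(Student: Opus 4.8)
The plan is to prove both directions separately, with the forward implication (fibering $\Rightarrow$ group condition) being routine and the converse carrying the real content of the theorem.

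For the forward direction, suppose $M^3$ fibers over $S^1$ with fiber a compact surface $F$. I would read off the conclusion from the long exact sequence of the fibration $F \hookrightarrow M \to S^1$,
\[
\cdots \to \pi_2(S^1) \to \pi_1(F) \to \pi_1(M) \to \pi_1(S^1) \to \pi_0(F) \to \cdots.
\]
Since $F$ is connected ($\pi_0(F) = \ast$) and $\pi_2(S^1) = 0$, this collapses to a short exact sequence $1 \to \pi_1(F) \to \pi_1(M) \to \mathbb{Z} \to 1$. The induced surjection $\pi_1(M) \to \mathbb{Z}$ has kernel $\pi_1(F)$, which is finitely generated because $F$ is a compact surface.

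For the converse, suppose we are given an epimorphism $\phi \colon \pi_1(M) \to \mathbb{Z}$ with finitely generated kernel $K$. First I would realize $\phi$ geometrically: since $S^1 = K(\mathbb{Z},1)$, the homomorphism $\phi$ is induced by a smooth map $f \colon M \to S^1$, and after a homotopy I may choose a regular value $p$ so that $S := f^{-1}(p)$ is a properly embedded two-sided surface dual to $\phi$. Using that $M$ is irreducible, together with the loop theorem and sphere theorem of Papakyriakopoulos, I would modify $f$ (compressing $S$ along essential disks, discarding sphere and trivial components) so that $S$ becomes incompressible and $\pi_1$-injective; irreducibility guarantees no essential sphere obstructs these surgeries. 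The heart of the argument is then to show that this incompressible $S$ is a fiber. Passing to the infinite cyclic cover $\widetilde{M} \to M$ corresponding to $K$, we have $\pi_1(\widetilde{M}) = K$, and $\widetilde{M}$ decomposes as an infinite stack $\bigcup_{i\in\mathbb{Z}} N_i$ of copies of the manifold $N$ obtained by cutting $M$ along $S$, with the deck transformation shifting the index and $S$ lifting to the separating surfaces $N_i \cap N_{i+1}$.

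I would now exploit the hypothesis crucially: a finite generating set for $K$ is supported in a finite subunion $N_{-m}\cup\cdots\cup N_m$, and a van Kampen / Bass--Serre analysis of the resulting amalgamated structure forces the inclusion $\pi_1(S) \hookrightarrow \pi_1(\widetilde{M}) = K$ to be surjective, hence an isomorphism since it is already injective by incompressibility. Because irreducibility together with infinite $\pi_1$ makes $M$ aspherical, each piece $N$ is aspherical with incompressible boundary, and a boundary inclusion inducing a $\pi_1$-isomorphism forces $N \cong S \times I$; thus $M$ is the mapping torus of a self-homeomorphism of $S$ and fibers over $S^1$. The main obstacle is precisely this last step — converting finite generation of $K$ into the product structure $N \cong S \times I$. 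This is where the theorem has its genuine content: one must leverage the stacking structure of the cyclic cover to pin down $\pi_1(S) \cong K$ and then rule out any nontrivial cobordism between the two boundary copies of $S$, taking care that the preliminary incompressibility reductions remain compatible with the cyclic cover and that the asphericity input is correctly invoked to upgrade a $\pi_1$-isomorphism to an actual homeomorphism.
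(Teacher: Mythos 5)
The paper does not prove this statement: it is quoted verbatim as a classical background result and attributed to \cite{stallingsfib}, so there is no in-paper argument to compare against. Judged on its own, your outline is the standard Stallings argument and is essentially correct as a sketch: the forward direction via the homotopy exact sequence of the fibration is fine, and the converse correctly follows the classical route (realize $\phi$ by a map to $S^1$, take a regular preimage, compress to an incompressible dual surface, pass to the infinite cyclic cover viewed as a bi-infinite stack of copies of the cut-open manifold $N$, and use finite generation of $K$ together with the amalgamated-product structure to force both boundary inclusions $\pi_1(S)\to\pi_1(N)$ to be isomorphisms).

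Three points deserve more care than your sketch gives them. First, the theorem as stated has a quiet exceptional case: Stallings needs the kernel to be infinite (more precisely, not $\mathbb{Z}/2$), which rules out degenerate situations of the $P^2\times S^1$ type; a complete proof should either exclude or handle finite kernels at the outset. Second, the final step is understated: knowing that $N$ is aspherical and that the boundary inclusion $S\hookrightarrow N$ is a $\pi_1$-isomorphism gives only a homotopy equivalence $N\simeq S\times I$, and upgrading this to a homeomorphism is itself a nontrivial theorem of Stallings (the $3$-dimensional product/cobordism theorem), not a formal consequence of asphericity; this is a genuine ingredient you should cite or prove rather than fold into one sentence. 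Third, when you compress $S$ and discard components you must check that the resulting surface is still connected, non-separating, and dual to $\phi$ (compressions preserve the homology class, but connectedness requires choosing the right component), since the stacking description of the cyclic cover depends on it. With those caveats made explicit, the proposal is a faithful reconstruction of the standard proof.
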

Theorems \ref{beauvillesiu}, \ref{bsa}, \ref{stallingsthm} raise the questions of \emph{how many} fibrations there are:
\begin{question}
\label{kahlerq}
Let $G = \pi_1(X).$ How many surjective maps $h:G\rightarrow S$ are there, such that $\ker(h)$ is the fundamental group of a compact K\"ahler manifold, and $S$ is the fundamental group of a compact $2$-dimensional orbifold?
\end{question}
The general group-theoretic form of Question \ref{kahlerq} is:
\begin{question}
\label{groupq}
Let $G$ be a group, and $\mathcal{C}_1, \mathcal{C}_2$ are two classes of groups. How many extensions 
\begin{equation}
\label{extensiondiag}
1\rightarrow K \rightarrow G \rightarrow B \rightarrow 1
\end{equation}
are there, with $K \in \mathcal{C}_1$ and $B \in \mathcal{C}_2?$
\end{question}

It should be noted that this question was considered for torsion-free Fuchsian groups by F. E. A. Johnson, who showed the following result in the very nice paper \cite{johnsonrigid}:
\begin{theorem}[F. E. A. Johnson, \cite{johnsonrigid}]
For any group $G,$ there is a finite number of extensions of type \eqref{extensiondiag} with $K, B$ torsion-free fuchsian groups.
\end{theorem}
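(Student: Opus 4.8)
The plan is to begin by reducing to the case in which $G$ is finitely generated and of finite homological type. If an extension \eqref{extensiondiag} with $K,B$ finitely generated torsion-free Fuchsian groups exists at all, then $B$ is a quotient of $G$ and $G$ is an extension of one $\mathrm{FP}$ group by another, hence itself $\mathrm{FP}$; and if $G$ admits no such extension the count is zero, so in all cases we may assume $G$ is $\mathrm{FP}$. I interpret the statement, as is forced by its truth, for \emph{non-elementary} torsion-free Fuchsian groups, i.e.\ groups of negative Euler characteristic (surface groups of genus $\ge 2$ and free groups $F_n$, $n\ge 2$). The first tool is multiplicativity of the Euler characteristic: since $K$ and $B$ have finite aspherical classifying spaces (surfaces or graphs), the Lyndon--Hochschild--Serre spectral sequence of \eqref{extensiondiag} gives $\chi(G)=\chi(K)\chi(B)$. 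As $\chi(K),\chi(B)<0$, the integer $\chi(G)=\chi(K)\chi(B)>0$ is fixed, there are only finitely many factorizations $\chi(G)=mn$ with $m,n<0$, and the Euler characteristic of a torsion-free Fuchsian group determines its isomorphism type up to the two choices of ``surface versus free''. Hence only finitely many isomorphism types occur for $K$ and for $B$.

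It then remains to bound, for each fixed isomorphism type of the base $B$, the number of normal subgroups $K\trianglelefteq G$ with $G/K\cong B$: two extensions are equivalent (through $\mathrm{id}_G$ in the middle) exactly when their kernels coincide, so this is the number of epimorphisms $G\twoheadrightarrow B$ with Fuchsian kernel, taken modulo the action of $\Aut(B)$ by post-composition. This is the step where $\chi(B)<0$ is indispensable. For $B=\integers$ the assertion is outright false, since by Thurston a three-manifold group with first Betti number $\ge 2$ fibers over the circle in infinitely many inequivalent ways, with surface fibers of unbounded genus; the ``rigidity of fibering'' is precisely that fibering over a base of negative Euler characteristic — a genuine hyperbolic $2$-orbifold rather than a circle — is a codimension-$\ge 2$, and therefore rigid, phenomenon. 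In cohomological terms, a surjection $G\twoheadrightarrow\integers$ only records a single line in $H^1(G;\ratls)$, on which the cup product vanishes automatically, whereas a surjection onto a surface group of genus $g\ge 2$ records a $2g$-dimensional subspace subject to a genuine algebraic constraint.

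The crux is thus this finiteness of fiberings with fixed hyperbolic base, which I would attack by the group-cohomological analogue of the De Franchis--Severi theorem (the classical fact behind Theorem \ref{beauvillesiu}). To a fibering $\phi\colon G\twoheadrightarrow B$, with $B$ a surface group, associate the subspace $V_\phi:=\phi^{*}H^1(B;\ratls)\subseteq H^1(G;\ratls)$, which is $2g$-dimensional and, because $\phi^{*}x\cup\phi^{*}y=\phi^{*}(x\cup y)$ lands in the line $\phi^{*}H^2(B;\ratls)$, is \emph{isotropic} for the cup product $H^1(G;\ratls)\wedge H^1(G;\ratls)\to H^2(G;\ratls)$ in the sense that all cup products of its elements are proportional to a single class $\beta=\phi^{*}[\text{pt}]^{\vee}$ with $\beta\cup\beta=0$. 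The plan is to show, via a group-theoretic Castelnuovo--de Franchis argument, that such maximal isotropic subspaces of dimension $\ge 4$ coming from actual fiberings are finite in number, and that each recovers $B$ and the kernel $K$; this is the honest shadow of the Hodge-index/De Franchis proof that a compact K\"ahler manifold has only finitely many pencils over curves of genus $\ge 2$. Additivity of cohomological dimension in \eqref{extensiondiag} makes $G$ a Poincar\'e duality group of dimension $\mathrm{cd}(K)+\mathrm{cd}(B)\le 4$, and I would use the resulting nondegenerate pairing on $H^{*}(G;\ratls)$ both to normalize $\beta$ and to supply the positivity needed to pin the fibration class to one of finitely many extremal rays.

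I expect the isotropy-plus-rigidity finiteness for fibration subspaces to be the main obstacle. A fixed alternating form has a positive-dimensional variety of isotropic subspaces, so the argument cannot be purely linear-algebraic: it must genuinely exploit that $V_\phi$ arises from an epimorphism, i.e.\ its integrality in $H^1(G;\integers)$ together with the rigidity of homomorphisms onto the (word-hyperbolic, Hopfian) surface group $B$, realized as a cocompact subgroup of $\PSL(2,\reals)$. This is exactly the point at which one imports De~Franchis-type hyperbolicity. The remaining bookkeeping — attaching at most finitely many normal subgroups $K$ to each admissible subspace, and treating the mixed and free cases where one of $K,B$ has cohomological dimension $1$ (so that the cup-product criterion degenerates to its $H^2=0$ form and $G$ is a duality group of dimension $3$ rather than a closed $\mathrm{PD}_4$ group) — I expect to be routine once the extremal-subspace finiteness is established.
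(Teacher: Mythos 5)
Note first that the paper does not prove this statement: it is quoted from Johnson's paper \cite{johnsonrigid}, so there is no in-house proof to compare against. Judged on its own, your proposal is sound in its first half but has an unfilled gap exactly at the decisive step. The reduction to finitely many isomorphism types of $K$ and $B$ via $\chi(G)=\chi(K)\chi(B)$ and the finitely many factorizations of a positive integer into two negative ones is correct, and it is the same spectral-sequence multiplicativity the paper invokes for Eq.~\eqref{johnsoneq}; your observation that the statement must exclude elementary Fuchsian groups (for $B=\integers$ Thurston's fibered faces give infinitely many fiberings) is also a fair and necessary correction to the statement as paraphrased.

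The gap is the finiteness, for a fixed non-elementary $B$, of the epimorphisms $G\twoheadrightarrow B$ with Fuchsian kernel modulo $\Aut(B)$. You correctly identify this as ``the main obstacle'' and then offer only a plan: a group-cohomological Castelnuovo--de Franchis argument on isotropic subspaces of $H^1(G;\ratls)$. But the Castelnuovo--de Franchis mechanism is a theorem about compact K\"ahler manifolds; its force comes from the Hodge--Riemann bilinear relations, which supply the positivity that cuts the positive-dimensional family of isotropic subspaces down to finitely many ``fibration classes.'' For an abstract group $G$ there is no such positivity on $H^*(G;\ratls)$ -- even when $G$ is a $\mathrm{PD}_4$ group the intersection form can have any signature -- so the step you defer (``the rigidity of homomorphisms onto the word-hyperbolic, Hopfian surface group $B$'') is not a routine import but the entire content of the theorem, and nothing in your sketch shows how to carry it out. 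A workable purely group-theoretic route, closer in spirit to the paper's Section~\ref{uniqueproof}, is to compare two distinct kernels $K_1\neq K_2$ through the nine-lemma diagram and Theorem~\ref{bieri}: each $K_i$ maps onto a finite-index subgroup of $G/K_j$, forcing $K_1K_2$ to have finite index in $G$ with index bounded by the Euler characteristics already pinned down, after which the kernels are among the finitely many normal subgroups of $G$ containing a fixed finite-index characteristic subgroup's commutator data. As written, your argument does not establish the finiteness it needs.
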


Johnson points out in his paper that an algebraic-geometric form of  his finiteness result was a celebrated result of A. N. Parshin \cite{parshin}.

In this paper we will consider some aspects of Questions  \ref{kahlerq} and \ref{groupq}. In Section \ref{khs} we start looking at fiberings of K\"ahler surfaces (which are manifolds of real dimension $4.$). First, we observe that (in the setting of Extension \eqref{extensiondiag}) if the group $G$ has cohomological dimension at most $2$ there is no extension with $\mathcal{C}_1$ the class of K\"ahler groups (fundamental groups of compact K\"ahler manifolds) and $\mathcal{C}_2$ the class of fundamental groups of compact orbifolds of negative Euler characteristic (Theorem \ref{bierithm}) (so if a K\"ahler manifold had such fundamental group, it would not fiber with base hyperbolic surface (or orbifold, which corresponds to a "Seifert fibering", with some exceptional fibers).

Then we consider a situation where a group $G$ has an extension \eqref{extensiondiag} with $\mathcal{C}_1$ and $\mathcal{C}_2$ nonelementary Fuchsian groups. We show that there are arithmetic obstructions to multiple fiberings with prescribed ranks of the fiber and base groups (Theorem \ref{uniquefact}, proved in Section \ref{uniqueproof}).

The arguments used to prove Theorem \ref{uniquefact} do not work when the fibers are tori, but it turns out that if the fiber group in Extension \eqref{extensiondiag} is \emph{solvable} (in particular, abelian), very strong rigidity results can be shown, thanks to recent advances in geometric group theory -- this is the subject of Section \ref{solfib}. These results are applied to the question of fibering K\"ahler surfaces in Corollary \ref{ellipticuniq}.

Finally, the structure of Extension \eqref{extensiondiag} is determined, in many important cases, by the \emph{monodromy representation} of $B$ into the outer automorphism group $\Out(N)$ of $N.$
This is discussed in Section \ref{monosec}. It turns out that in many interesting cases, in an extension of the form \eqref{extensiondiag} is virtually a direct product.
\section{K\"ahler surfaces}
\label{khs}
\subsection{Negative results}
There are certainly cases where no such map exists for group-theoretic reasons. 
\begin{theorem}
Let $G$ be a a group with Kazhdan's property T (or, for example, property $\tau$ with respect to representations with finite image). Then there is no surjective map $G\rightarrow H\rightarrow 1,$ where $H$ does \emph{not} have property T (or property $\tau,$ respectively).
\end{theorem}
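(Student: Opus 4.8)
The plan is to establish the contrapositive: if $\phi: G \to H$ is a surjection and $G$ has Kazhdan's property (T), then $H$ has property (T) as well; the statement as phrased then follows at once. The whole argument rests on pulling back unitary representations of $H$ to unitary representations of $G$ along $\phi$, and on the fact that surjectivity makes this correspondence behave well with respect to both almost-invariant and invariant vectors. I would begin by fixing the Kazhdan-pair formulation of property (T): $G$ has property (T) if and only if there is a finite set $Q \subseteq G$ and an $\epsilon > 0$ such that every unitary representation $(\pi, \mathcal{H})$ of $G$ admitting a unit vector $\xi$ with $\norm{\pi(g)\xi - \xi} < \epsilon$ for all $g \in Q$ possesses a nonzero $G$-invariant vector. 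Fixing such a pair $(Q,\epsilon)$, the claim I would prove is that $(\phi(Q),\epsilon)$ is a Kazhdan pair for $H$; note $\phi(Q)$ is again finite.

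The core step is then short. Let $(\rho, \mathcal{H})$ be any unitary representation of $H$ with a unit vector $\xi$ satisfying $\norm{\rho(s)\xi - \xi} < \epsilon$ for all $s \in \phi(Q)$. Setting $\pi = \rho \circ \phi$, a unitary representation of $G$ on the same space, we have for each $g \in Q$ that $\phi(g) \in \phi(Q)$, hence $\norm{\pi(g)\xi - \xi} = \norm{\rho(\phi(g))\xi - \xi} < \epsilon$, so $\xi$ is $(Q,\epsilon)$-almost-invariant for $\pi$. Property (T) of $G$ produces a nonzero $\eta$ with $\pi(g)\eta = \eta$, that is $\rho(\phi(g))\eta = \eta$, for all $g \in G$. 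Because $\phi$ is surjective, $\phi(G) = H$, so $\rho(h)\eta = \eta$ for every $h \in H$, giving a nonzero $H$-invariant vector. This verifies the claim, and hence $H$ has property (T).

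For the property $\tau$ version I would run the identical argument, restricting throughout to unitary representations of $H$ with finite image. The one extra point to check is that the pullback $\pi = \rho \circ \phi$ of a finite-image representation is again finite-image, which is immediate since $\pi(G) = \rho(\phi(G)) = \rho(H)$ is finite; every other step is verbatim, so property $\tau$ likewise descends to quotients.

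There is no genuinely hard step here -- surjectivity does all the work, first to transport the almost-invariant vector across $\phi$ via the image set $\phi(Q)$, and then to upgrade a $\pi$-invariant vector to a $\rho$-invariant one. The only point demanding care is the interplay of definitions: one must use the Kazhdan pair phrased \emph{uniformly} over all representations, so that $(\phi(Q),\epsilon)$ certifies property (T) for $H$ in general rather than merely for the single representation $\rho$ at hand.
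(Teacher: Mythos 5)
Your argument is correct and is essentially the paper's own (one-line) proof written out in full: both rest on pulling back unitary representations of $H$ to $G$ along the surjection, the paper phrasing this contrapositively as ``a bad sequence of representations of $H$ gives a bad sequence for $G$'' while you phrase it directly by showing $(\phi(Q),\epsilon)$ is a Kazhdan pair for $H$. The extra care you take with surjectivity (to transport almost-invariance forward and invariance back) and with the finite-image condition for property $\tau$ is exactly what the paper leaves implicit.
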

\begin{proof} Immediate from the definition of property T (or property $\tau$): a "bad" sequence of representations of $H$ gives a "bad" sequence of representations for $G.$
\end{proof}
\begin{remark}
An excellent reference for Property T is \cite{bekkadlhvalette}, for Property $\tau,$ see, for example, \cite{alextau}.
\end{remark}
\begin{corollary}
There is no exact sequence $1\rightarrow N \rightarrow G \rightarrow H \rightarrow 1,$ where $G$ has property T or property $\tau$ for representations with finite image, and $H$ is an infinite Fuchsian group.
\end{corollary}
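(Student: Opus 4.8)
The plan is to read the corollary as a direct application of the preceding Theorem combined with two structural facts about Fuchsian groups: that an infinite Fuchsian group has neither property T nor property $\tau$. Given an exact sequence $1\to N\to G\to H\to 1$, the map $G\to H$ is in particular surjective, so by the Theorem the hypothesis that $G$ has property T (respectively property $\tau$ for finite-image representations) forces $H$ to inherit the same property. It therefore suffices to show that an infinite Fuchsian group $H$ can have neither, and the entire content of the corollary is packaged in these two verifications.

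For property T, I would use that $H$, being a discrete subgroup of $\PSL_2(\mathbb{R})=\mathrm{Isom}^+(\mathbb{H}^2)$, acts properly by isometries on the real hyperbolic plane, and that any group admitting such an action is a-T-menable (has the Haagerup property); a group possessing both property T and the Haagerup property is necessarily finite. Since $H$ is infinite, it cannot have property T. For readers preferring the classical route, when $H$ has finite covolume it is a lattice in $\PSL_2(\mathbb{R})\cong \mathrm{SO}_0(2,1)$, a rank-one simple group of $\mathrm{SO}(n,1)$-type, which does not have property T, and hence neither does its lattice; the Haagerup argument has the advantage of covering the infinite-covolume and geometrically infinite cases uniformly, and the infinitely generated case is immediate since property T entails finite generation.

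For property $\tau$ (understood with respect to the family of \emph{all} finite-index subgroups, i.e. all finite-image representations), I would argue as follows. We may assume $H$ is finitely generated, the infinitely generated case reducing to a free group that surjects onto $\mathbb{Z}$ outright. By Selberg's lemma $H$ has a torsion-free finite-index subgroup $H_0$, which is the fundamental group of a hyperbolic surface and hence either a free group or a closed surface group; in either case $b_1(H_0)>0$, so $H_0$ surjects onto $\mathbb{Z}$. Since $\mathbb{Z}$ fails property $\tau$ (its finite cyclic quotients are cycles, not an expander family) and property $\tau$ passes to quotient images by the Theorem, $H_0$ fails property $\tau$; and since property $\tau$ with respect to all finite-index subgroups is a commensurability invariant, $H$ fails property $\tau$ as well, giving the required contradiction.

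The main obstacle is the property $\tau$ half, and specifically the point that some infinite Fuchsian groups (for instance cocompact triangle groups) have finite abelianization, so one cannot simply map $H$ onto $\mathbb{Z}$ directly: passing to a finite-index surface or free subgroup and invoking commensurability invariance of property $\tau$ is essential. One must also be careful that the notion used is property $\tau$ relative to \emph{all} finite quotients rather than the weaker congruence version, since arithmetic Fuchsian groups do enjoy Selberg-type spectral gaps along congruence subgroups; only the full notion yields the stated rigidity.
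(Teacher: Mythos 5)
Your proposal is correct and follows the same overall strategy as the paper: apply the preceding theorem to push property T (resp.\ $\tau$) from $G$ onto the quotient $H$, then check that an infinite Fuchsian group can have neither. The difference is in how the check is carried out. The paper disposes of both properties with the single parenthetical remark that infinite Fuchsian groups ``surject onto $\mathbb{Z}$,'' which, read literally, is false for cocompact triangle groups and other Fuchsian groups with finite abelianization; you correctly flag exactly this issue and repair it by passing to a torsion-free finite-index subgroup via Selberg's lemma, surjecting \emph{that} onto $\mathbb{Z}$, and invoking the commensurability invariance of property $\tau$ (and, for property T, either the Haagerup property of groups acting properly on the hyperbolic plane or the failure of T for lattices in the rank-one group $\PSL(2,\mathbb{R})$, which also covers infinite-covolume and infinitely generated cases). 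Your version is therefore not just a fleshing-out but a genuine correction of the paper's stated justification; the paper's one-liner buys brevity at the cost of an argument that does not apply to all infinite Fuchsian groups, while yours covers them uniformly. Your closing caveat about distinguishing property $\tau$ with respect to all finite quotients from the congruence version is also a worthwhile precision that the paper leaves implicit.
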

\begin{proof} This follows from the well-known fact that infinite Fuchsian groups \emph{do not} have property T, nor property $\tau$ for representations with finite image (in particular because they surject onto $\integers.$)
\end{proof}
\begin{corollary}
No K\"ahler manifold whose fundamental group has property T or property $\tau$ for representations with finite image fibers over a hyperbolic or Euclidean two-dimensional orbifold.
\end{corollary}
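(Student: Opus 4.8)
The plan is to reduce the statement to the purely group-theoretic Corollary just proved, treating the hyperbolic and Euclidean bases separately, since only the former produces an infinite Fuchsian group. First I would note that a (Seifert) fibering $X \to O$ over a compact two-dimensional orbifold $O$, allowing exceptional fibers, induces a surjection $\pi_1(X)\twoheadrightarrow \pi_1^{\mathrm{orb}}(O)$ and hence an extension $1\to N\to \pi_1(X)\to \pi_1^{\mathrm{orb}}(O)\to 1$ of the shape \eqref{extensiondiag}; in the holomorphic category the existence of this surjection is exactly the content of Theorem \ref{bsa}. Since both property T and property $\tau$ for finite-image representations are inherited by quotients (a finite-image representation of the quotient pulls back to one of $\pi_1(X)$, and invariant vectors descend along the surjection because a vector is $\pi_1(X)$-invariant precisely when it is invariant under the image), the orbifold group $H:=\pi_1^{\mathrm{orb}}(O)$ inherits whichever of the two properties $\pi_1(X)$ is assumed to carry.

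When $O$ is hyperbolic it has negative Euler characteristic, so $H$ is an infinite cocompact Fuchsian group, and this is precisely the configuration forbidden by the preceding Corollary: such a group surjects onto $\integers$ and therefore has neither property T nor property $\tau$ for finite-image representations. This contradiction disposes of the hyperbolic case.

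When $O$ is Euclidean its Euler characteristic vanishes and $H$ is a two-dimensional crystallographic group, hence infinite, finitely generated, virtually $\integers^2$, and in particular amenable and residually finite. For property T I would simply invoke that an infinite amenable group is not Kazhdan, contradicting the inheritance just established. For property $\tau$ I would use that $H$ is commensurable with $\integers^2$ together with the commensurability-invariance of property $\tau$ with respect to finite-index subgroups: since $\integers^2$ fails property $\tau$ (its finite quotients $(\integers/n)^2$ are discrete tori whose isoperimetric constants tend to $0$, so they form no expander family), $H$ must fail it as well, again a contradiction.

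The step demanding the most care is this last one. The shortcut used in the previous Corollary -- that the base group surjects onto $\integers$ -- is \emph{unavailable} here, because the rotation wallpaper groups p3, p4 and p6 have finite abelianization: for a rotation $g$ of order $k\in\{3,4,6\}$ the matrix $1-g$ is invertible over $\ratls$, so the coinvariants $\integers^2/(1-g)\integers^2$ are finite and $H$ admits no infinite cyclic quotient. One must therefore replace the $\integers$-quotient argument either by the commensurability invariance of property $\tau$ or by a direct Cheeger-constant estimate on the finite quotients arising from the finite-index translation lattice $\integers^2\trianglelefteq H$.
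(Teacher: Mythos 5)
Your proof is correct, and it is essentially the reduction the paper intends: the paper states this corollary with no proof at all, leaving the reader to combine the preceding theorem (property T and property $\tau$ pass to quotients) with the preceding corollary (no surjection onto an infinite Fuchsian group). Where you genuinely add something is in the Euclidean case. The paper's preceding corollary is stated only for \emph{infinite Fuchsian groups}, and a two-dimensional Euclidean crystallographic group is not Fuchsian, so the Euclidean half of the statement is not literally covered by anything proved in the paper; moreover the paper's parenthetical justification (``they surject onto $\integers$'') fails outright for the wallpaper groups p3, p4, p6, whose abelianizations are finite, as you correctly point out. Your substitute arguments -- infinite amenable groups are not Kazhdan, and property $\tau$ is a commensurability invariant so a virtually $\integers^2$ group fails it because $\integers^2$ does -- are both sound and fill a real gap in the implicit argument. (Incidentally, the same looseness infects the paper's hyperbolic case: cocompact triangle groups such as $(2,3,7)$ are perfect, so even there the clean argument is the one you give via a finite-index subgroup surjecting onto $\integers$ together with finite-index invariance of $\tau$, rather than a surjection of the group itself onto $\integers$.) The one purely expository quibble is that you do not need the full strength of Theorem \ref{bsa} to produce the surjection $\pi_1(X)\twoheadrightarrow\pi_1^{\mathrm{orb}}(O)$: for a given Seifert fibering this is just functoriality of $\pi_1$ for a surjection with connected fibers; Arapura's theorem is the converse direction.
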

For another sort of an obstruction, recall Bieri's Theorem:
\begin{theorem}[R.~Bieri, \cite{bierinotes}]
\label{bierithm}
If $G$ is a group of cohomological dimension of at most two, while $N$ is a normal subgroup of $G$ of infinite index, then either $N$ is free, or $N$ is not finitely presentable.
\end{theorem}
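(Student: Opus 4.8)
The statement is best recast in its contrapositive, dichotomous form: I would assume that $N$ is finitely presentable and \emph{not} free, and aim to show that $N$ then has \emph{finite} index in $G$. Two standard facts get the reduction going. First, cohomological dimension is monotone under passage to subgroups, so $\operatorname{cd} N \le \operatorname{cd} G \le 2$; in particular $G$, and hence $N$, is torsion-free. Second, by the Stallings--Swan theorem a group has cohomological dimension at most one precisely when it is free, so the hypothesis that $N$ is not free forces $\operatorname{cd} N \ge 2$, whence $\operatorname{cd} N = 2$. Moreover, a group of type $FP_2$ with $\operatorname{cd} \le 2$ is automatically of type $FP$, so $N$ admits a \emph{finite} resolution by finitely generated projective $\mathbb{Z}N$-modules; this has the twin consequences that $H^2(N;-)$ commutes with arbitrary direct sums and that $\operatorname{cd} N$ equals the top degree in which $H^{\bullet}(N;\mathbb{Z}N)$ is nonzero. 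The whole problem therefore becomes: show that if $[G:N]=\infty$ then $A := H^2(N;\mathbb{Z}N)$ vanishes, contradicting $\operatorname{cd} N = 2$.

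The engine is the Lyndon--Hochschild--Serre spectral sequence of the extension $1 \to N \to G \to Q \to 1$ (writing $Q = G/N$) with coefficients in the group ring:
\[
E_2^{p,q} = H^p\!\left(Q;\, H^q(N;\mathbb{Z}G)\right) \ \Longrightarrow\ H^{p+q}(G;\mathbb{Z}G).
\]
Two structural inputs feed into this. Because $[G:N]=\infty$, the ring $\mathbb{Z}G$ is a free left $\mathbb{Z}N$-module of infinite rank, with one copy of $\mathbb{Z}N$ for each coset; and because $H^2(N;-)$ commutes with direct sums (as noted above), $H^2(N;\mathbb{Z}G)$ is isomorphic, as a $\mathbb{Z}Q$-module, to the module $\mathbb{Z}Q \otimes_{\mathbb{Z}} A = \operatorname{Ind}_1^Q A$ induced up from the trivial subgroup of $Q$: the group $Q$ simply permutes the coset-indexed copies of $A$. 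Since $\operatorname{cd} N = 2$ the spectral sequence is concentrated in rows $q \le 2$, and since $\operatorname{cd} G \le 2$ its abutment vanishes above total degree two. The plan is to exploit this collapse on the top row $q=2$ to force $\operatorname{Ind}_1^Q A$, and hence $A$ itself, to be zero.

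This last step is where the real content -- and the main obstacle -- lies, and it is precisely the point at which the infiniteness of $Q$ must be used. A naive edge-map argument does not suffice: the top row $E_2^{p,2}$ receives no differentials (rows $q>2$ are empty), so its vanishing at $E_\infty$ only says that every class is eventually killed by an \emph{outgoing} differential into the lower rows, not that $E_2^{p,2}$ vanishes at the second page. I would therefore analyse the $\mathbb{Z}Q$-cohomology of the induced module $\operatorname{Ind}_1^Q A$ directly, the guiding principle being that for an infinite group $Q$ one has $H^0(Q;\mathbb{Z}Q)=0$ and, more generally, modules induced from the trivial subgroup have severely constrained cohomology -- so that the survival of a nonzero $A$ to the abutment in total degree two would be incompatible with $\operatorname{cd} G \le 2$. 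Carrying this out rigorously, keeping careful track of how the finiteness of the resolution of $N$ licenses the interchange of $H^2(N;-)$ with infinite direct sums in top degree, is the technical heart of the argument; conceptually it is the case $n=2$ of the general principle (due to Strebel for duality groups and to Bieri under $FP_n$ hypotheses) that a normal subgroup of infinite index of a group of cohomological dimension $n$ and of type $FP_n$ has cohomological dimension at most $n-1$.
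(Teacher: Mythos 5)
First, note that the paper does not actually prove this statement: it is quoted as Theorem \ref{bierithm} with a citation to Bieri's notes and used as a black box, so there is no in-paper argument to compare yours against. Judged on its own terms, your proposal correctly assembles the standard reductions --- monotonicity of cohomological dimension, Stallings--Swan to convert ``not free'' into $\operatorname{cd} N = 2$, the observation that type $FP_2$ together with $\operatorname{cd} \le 2$ gives type $FP$, hence that $H^2(N;-)$ commutes with direct sums and that $\operatorname{cd} N$ is detected by $H^2(N;\mathbb{Z}N)$ --- and this is indeed the framework in which Bieri's actual proof lives.

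However, the proposal stops exactly where the theorem begins. You explicitly defer ``the technical heart'': extracting a contradiction from $A := H^2(N;\mathbb{Z}N) \neq 0$ when $Q = G/N$ is infinite. Moreover, the heuristic you offer for closing that gap does not work as stated. The identification $H^2(N;\mathbb{Z}G) \cong \bigoplus_{q \in Q} A$ (a coset-permuted induced module, with an additional diagonal twist by the conjugation action that your description suppresses) makes $E_2^{0,2} = H^0\bigl(Q; \bigoplus_{Q} A\bigr) = 0$ automatically for infinite $Q$, whether or not $A = 0$; so the vanishing of invariants of induced modules --- the ``guiding principle'' you invoke --- yields no contradiction. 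For $p > 0$ one cannot compute $H^p\bigl(Q;\bigoplus_Q A\bigr)$ by Shapiro's lemma either, since over an infinite $Q$ Shapiro applies to coinduced modules (products), not induced ones (sums). As set up, the spectral sequence is therefore perfectly consistent with $A \neq 0$, and a genuinely new input is needed: in Bieri's argument this is the right-exactness of the top-degree functor $H^2(N;-)$ for an $FP$ group of cohomological dimension two, fed into his theorem that a normal subgroup of type $FP_m$ with $\operatorname{cd} N = m$ and infinite quotient forces $\operatorname{cd} G \ge m+1$. Your final sentence cites that statement, but citing it is citing the theorem itself rather than proving it; as written, the proposal is an accurate plan with the decisive step missing.
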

From this we have the following corollary:
\begin{corollary}
Let $M^n$ be a  manifold whose fundamental group is of cohomological dimension at most $2.$ Then $M$ is not a K\"ahler manifold which fibers over a compact two (real) dimensional base with compact fiber.
\end{corollary}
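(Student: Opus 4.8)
The plan is to argue by contradiction: assuming $M$ is Kähler and admits such a fibering, I would extract a group extension to which Bieri's Theorem \ref{bierithm} applies, and then contradict the Kähler hypothesis. Write $G=\pi_1(M)$; let $F$ be the generic (smooth) fiber and $B$ the base, and take the fibering to be a genuine Seifert fibering over a hyperbolic or Euclidean $2$-orbifold $B$, so that — as in the setting of the previous corollary — both $\pi_1(F)$ and $Q:=\pi_1^{\mathrm{orb}}(B)$ are infinite (the degenerate cases, such as $\proj^1\times\proj^1\to\proj^1$ with simply connected fiber and base, fall outside this and are not the point). Since such a $B$ is aspherical, $\pi_2(B)=0$, and the long exact homotopy sequence of the fibering collapses to the short exact sequence
\begin{equation}
1\longrightarrow N\longrightarrow G\longrightarrow Q\longrightarrow 1,
\end{equation}
with $N=\pi_1(F)$; the exceptional fibers of the Seifert fibering are absorbed into the orbifold structure of the base, so this sequence persists.

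First I would record that $N$ is finitely presentable, since it is the fundamental group of the compact manifold $F$, and that $N$ has infinite index in $G$, since $Q$ is infinite. Now $G$ has cohomological dimension at most $2$ by hypothesis and $N\trianglelefteq G$ is finitely presentable of infinite index, so Bieri's Theorem \ref{bierithm} applies and forces $N$ to be \emph{free}.

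The decisive step is to play this against the Kähler hypothesis on the total space. Because $M$ is Kähler and the fibering is holomorphic onto the Riemann surface (orbifold) $B$, the generic fiber $F$ is a compact complex submanifold of $M$, hence a compact Kähler manifold; therefore $N=\pi_1(F)$ is a Kähler group. But a nontrivial free group is never Kähler: the group $\integers$ has odd first Betti number, while Kähler groups have even $b_1$, and a free group of rank $\geq 2$ is a nontrivial free product, excluded by Gromov's theorem that a Kähler group cannot split as a nontrivial free product. Hence $N$ is trivial, contradicting that $F$ has infinite fundamental group, and the corollary follows.

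The step I expect to be the main obstacle is the extraction of the clean extension (the short exact sequence above) in the presence of singular or multiple fibers. The concluding step needs $N$ itself — the kernel of $G\to Q$ — to be the fundamental group of a smooth Kähler fiber, and hence a Kähler group, rather than merely a quotient of one; verifying that passage to the orbifold base preserves this is exactly the place where the holomorphy of the fibering, and not only its underlying topology, must be used.
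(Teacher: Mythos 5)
Your argument is correct and follows essentially the same route as the paper: the fiber group is a finitely presentable normal subgroup of infinite index in a group of cohomological dimension at most $2$, so Bieri's theorem forces it to be free, and a nontrivial free group is not a K\"ahler group (the paper cites Arapura--Bressler--Ramachandran for this last step, where you use the parity of $b_1$ together with Gromov's free-product theorem). The only substantive differences are bookkeeping: the paper keeps ``not finitely presentable'' as a separate case rather than excluding it up front, and it does not dwell on the Seifert/exceptional-fiber subtleties you flag.
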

\begin{proof}
If such a fibration existed, the fundamental group of the fiber would be of infinite index, hence either free or not finitely presentable. In the latter case, it is not the fundamental group of a compact manifold, in the former case, it is not the fundamental group of a compact \emph{K\"ahler} manifold, by the results of D. Arapura, P. Bressler, and M. Ramachandran \cite{arabrer}.
\end{proof}
\begin{remark}
The class of groups of cohomological dimension at most two is quite large, since it includes free groups and fundamental groups of compact surfaces, and is closed under the operations of taking free products with amalgamation and HNN extensions, where the amalgamated or associated products are free, or, more generally, graphs of groups with edge groups free. 

In addition, any one-relator group $G= \langle a_1,..., a_k \left| r=1\right.\rangle$ where $r\in F(a_1,.., a_k)$ is a cyclically reduced word which is not a proper power, not a primitive element in $F(a_1,..., a_k)$ and involves all of $a_1,..., a_k,$ then $G$ is torsion free, freely indecomposable (and thus one-ended) and has cohomological dimension two.\end{remark}
\begin{question} Are there complex surfaces other than $P^1 \times C,$ where $C$ is any curve, whose fundamental groups are of cohomological dimension at most two?
\end{question}
\begin{remark} The "trivial exceptions" were pointed out by Tom Church.
\end{remark}
\subsection{Obstructions to multiple fibering}
It is clear that there are cases when there is more than one fibration, in particular, when $X = C_g \times C_h,$ it can be viewed as a fibration with either factor as the base. Our main result is the following theorem:

\begin{theorem}
\label{uniquefact}
Suppose
\begin{gather*}
1\rightarrow N \xrightarrow{i} G \xrightarrow{\psi} S_2 \rightarrow 1\\
1\rightarrow K \xrightarrow{i} G \xrightarrow{\phi} S_1 \rightarrow 1,
\end{gather*}
where $N,K, S_1, S_2$ are nontrivial finitely generated fundamental groups of compact hyperbolic surfaces.  Then we have the following possibilities:
\begin{enumerate}
\item $N = K$ and $S_1 \simeq S_2.$
\label{trivcase}
\item 
\label{dprod}
The genus of $N$ equals the genus of $S_1$ and the genus of $K$ equals the genus of $S_2.$ In this case $N \simeq S_1,$ $K\simeq S_2,$ and $G = N \times K.$
\item $G$ is a nontrival finite extension of the direct product of $N$ and $K.$ In this case 
\[
\dfrac{g(N)-1}{g(S_1)-1} = \dfrac{g(K)-1}{g(S_2)-1} =q > 1,
\] where $q$ is the order of the extension (and so an integer).
\label{fprod}
\item  The group $G$ is a finite extension of $N K,$ while $N \cap K$ is an infinitely generated free group. In this case the genus of $N$ is greater than that of $S_1,$ while the genus of $K$ is greater than that of $S_2.$ In this case, it is also true that
\begin{equation}
\label{johnsoneq}
(g(N)-1)(g(S_2)-1) = (g(K)-1)(g(S_1) -1).
\end{equation}
\label{hardcase}
\end{enumerate}
\end{theorem}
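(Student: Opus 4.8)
The plan is to analyze the two fibrations through the single homomorphism $\Phi=(\phi,\psi)\colon G\to S_1\times S_2$, whose kernel is exactly $N\cap K$ and whose image $L:=\Phi(G)$ is a \emph{subdirect product} of $S_1\times S_2$ (it surjects onto each factor). A direct computation identifies $L\cap(S_1\times 1)$ with $\bar N:=\phi(N)$ and $L\cap(1\times S_2)$ with $\bar K:=\psi(K)$; moreover $\bar N\trianglelefteq S_1$ and $\bar K\trianglelefteq S_2$ are the kernels of the induced maps, and $G/NK\cong S_1/\bar N\cong S_2/\bar K=:R$. All four conclusions will be read off from the position of $N\cap K$ and of these subgroups. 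Throughout I use three standard facts about a closed hyperbolic surface group $S$: a nontrivial \emph{normal} subgroup of infinite index is infinitely generated free; any subgroup which is not free (e.g.\ is itself a closed surface group) has finite index; and a surjection of closed surface groups $A\twoheadrightarrow B$ forces $g(A)\ge g(B)$, with equality only when it is an isomorphism (seen on $H_1$ together with the Hopf property).

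First I would dispose of the identity and the degenerate case. Since $N,S_2,K,S_1$ and $G$ are all of type $F$, multiplicativity of the Euler characteristic in the two extensions gives $\chi(G)=\chi(N)\chi(S_2)=\chi(K)\chi(S_1)$; writing $\chi=2-2g$ and clearing the factor $4$ yields \eqref{johnsoneq}, which therefore holds in every case. Next, if $N\subseteq K$ then $N$, being a non-free surface group, cannot sit with infinite index in the surface group $K$, so $N$ has finite index in $K$, whence $K/N$ is finite and torsion-free, i.e.\ trivial; thus $N=K$ and $S_1\cong G/N\cong S_2$. This is case \ref{trivcase}, and by symmetry I may henceforth assume neither of $N,K$ contains the other. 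Then $\bar N,\bar K\neq 1$ and $N\cap K$ has infinite index in both $N$ and $K$, so $N\cap K$ is either trivial or infinitely generated free.

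If $N\cap K=1$ then $[N,K]\subseteq N\cap K=1$, so $N$ and $K$ commute and $NK=N\times K$. Now $\bar N\cong N$ is a non-free normal subgroup of $S_1$, hence of finite index $q:=[S_1:\bar N]=|R|$; symmetrically $\bar K\cong K$ has index $|R|=q$ in $S_2$. Comparing Euler characteristics of a surface group and its index-$q$ subgroup gives $g(N)-1=q\,(g(S_1)-1)$ and $g(K)-1=q\,(g(S_2)-1)$, so the two ratios equal $q=[G:NK]$. For $q=1$ this is case \ref{dprod} ($G=N\times K$ with $N\cong S_1$, $K\cong S_2$), and for $q>1$ it is case \ref{fprod}.

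There remains case \ref{hardcase}, $N\cap K\neq 1$, which is the crux. Here $N\cap K$ is infinitely generated free by the first surface-group fact, giving that part of the claim immediately. The real work is to show that $NK$ has finite index in $G$, equivalently that $\bar N$ has finite index in $S_1$; a priori $\bar N$ could be an infinite-index (hence free) normal subgroup of $S_1$, which is exactly the possibility the classification must exclude. My plan is to observe first that $L=G/(N\cap K)$ is \emph{finitely presented}: the quotient $\bar N=N/(N\cap K)$ is finitely presented (it is a surface or free group), so $N\cap K$ is normally finitely generated in $N$, hence in $G$, and a quotient of the finitely presented group $G$ by a normally finitely generated subgroup is finitely presented. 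Then $L$ is a finitely presented subdirect product of $S_1\times S_2$ with $L\cap S_1=\bar N\neq 1$ and $L\cap S_2=\bar K\neq 1$; by the theory of finitely presented subdirect products of surface groups (Bridson, Howie, Miller and Short), such an $L$ has finite index in $S_1\times S_2$. Hence $\bar N$ is of finite index in $S_1$ and $[G:NK]=[S_1:\bar N]<\infty$. Finally, $\bar N$ is then a closed surface group and $N\twoheadrightarrow\bar N$ has the nontrivial kernel $N\cap K$, so $g(N)>g(\bar N)\ge g(S_1)$, and symmetrically $g(K)>g(S_2)$; together with \eqref{johnsoneq} this completes case \ref{hardcase}. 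The one genuinely nontrivial input, and the step I expect to be the main obstacle, is the finite-index conclusion for the subdirect product $L$: everything else is elementary surface-group theory, but ruling out an infinite common quotient $R$ seems to require the finite-presentation obstruction for fibre products (the Stallings--Bieri phenomenon and its converse).
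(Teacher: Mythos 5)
Your proof is correct and, despite the different packaging (the subdirect product $L=\Phi(G)\le S_1\times S_2$ rather than the paper's nine-lemma diagram), it has the same skeleton as the paper's argument: the same case division according to whether $\phi(N)$ is trivial, then whether $N\cap K$ is trivial, with the genus identities coming from multiplicativity of the Euler characteristic exactly as in Johnson's argument that the paper cites. The one place where you genuinely diverge is the step you flag as the main obstacle -- showing in case \ref{hardcase} that $\bar N=\phi(N)$ has finite index in $S_1$ -- where you invoke finite presentability of $L$ and the Bridson--Howie--Miller--Short theorem on finitely presented subdirect products of surface groups. That route works, but it is a sledgehammer: $\bar N$ is finitely generated (it is the image of the finitely generated group $N$), nontrivial, and normal in $S_1$, so the Greenberg/Karrass--Solitar theorem (Theorem \ref{bieri} of the paper: a finitely generated nontrivial normal subgroup of a surface group has finite index) gives the finite-index conclusion in one line; equivalently, your own listed fact that a nontrivial normal subgroup of infinite index is \emph{infinitely generated} free already rules out infinite index for the finitely generated $\bar N$. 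This is precisely how the paper proceeds, applying Theorem \ref{bieri} to $M=\phi(N)$ at the outset to obtain the trivial-or-finite-index dichotomy before any further case analysis. So nothing in your argument is wrong, but the ``genuinely nontrivial input'' you identify is in fact elementary, and the finite-presentation machinery for fibre products can be deleted without loss.
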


\begin{remark}
\label{uniquefact2}
The hypothesis that $N, K, S_1, S_2$ are finitely generated fundamental groups of \emph{compact} hyperbolic surfaces is excessive. The result  (with the possible exception of Eq. \eqref{johnsoneq})holds if the groups are any finitely generated fuchsian groups, though Case \eqref{hardcase} needs to be modified slightly: if all groups are free, we simply replace genus by the rank. if $N, K$ are free and $S_1, S_2$ are compact surface groups (or vice versa), we replace genus by rank, again, but now we can replace "greater" by "greater than or equal to".
\end{remark}
\begin{remark} 
\label{nosig}
By Corollary \ref{kotcor} below, if we think of the group extensions as corresponding to surface-over-surface fibrations, then, by Corollary \ref{kotcor}  the signature of the total space vanishes in Cases \ref{dprod} and \ref{fprod}.
\end{remark}
\begin{remark}
As pointed out in \cite{churchfarbthib} -- the result seems to be actually due to \cite{bryandonagistip}, where the authors found the second fibering, -- the Atiyah-Kodaira manifold $N^4$ fibers in two ways, thusly:
\begin{gather}
\Sigma_4\rightarrow N^4 \rightarrow \Sigma_{17},\\
\Sigma_{49}\rightarrow N^4 \rightarrow \Sigma_2,
\end{gather}
where $\Sigma_g$ is a surface of genus $g.$
Notice that in this case, using the notation of Theorem \ref{uniquefact},
$g(N) = 4, g(S_2) = 17, g(K) = 49, g(S_1) = 2,$ so
\[
3=\dfrac{g(N)-1}{g(S_1)-1} = \dfrac{g(K)-1}{g(S_2)-1},
\]
which falls into the situation of Case \ref{fprod} of Theorem \ref{uniquefact}, so it is reasonable to conjecture that $N^4$ is triply covered by $\Sigma_4 \times \Sigma_{49}.$ Since, however, the signature of $N^4$ equals $32 \neq 0,$ this is not possible by Remark \ref{nosig}

A general construction of $4$-manifolds which fiber over a surface in different ways was given by Jim Bryan and Ron Donagi in \cite{bryandonagi}. They define a family of complex surfaces $X_{g, n},$  where $X_{g, n}$ admits two fiberings. In the language of Theorem \ref{uniquefact}, for $X_{g, n}$ we have:
\begin{gather*}
g(S_2) =  g,\\
g(N) =  g(gn -1)n^{2g - 2} + 1,\\
g(S_1) = g(g-1) n^{2g - 2} + 1,\\
g(K) = g n.
\end{gather*}
We then have for the Bryan-Donagi surfaces:
\[
\dfrac{g(N)-1}{g(S_1)-1} = \dfrac{g(K) -1}{g(S_2)-1} = \dfrac{gn-1}{g-1},
\]
so the two ratios still agree, although they are not integral for any value of $g, n, $ so we know we are in Case \ref{hardcase} of Theorem \ref{uniquefact} even without bringing in the signature  -- Bryan and Donagi show that:
\[
\sigma(X_{g, n} )= \frac43 g(g-1)(n^2-1) n^{2g - 3}.
\]

T. Church and B. Farb gave a general construction of manifolds with multiple fiberings (with fiber a surface) in \cite{churchfarbthib}[Section 3].
\end{remark}
\begin{remark}
A deep study of the situation of Case \ref{fprod} of Theorem \ref{uniquefact} has been undertaken by F. Catanese and coauthors in papers \cite{cat1,cat2,cat3,cat4}
\end{remark}
\section{Proof of Theorem \ref{uniquefact}}
\label{uniqueproof}
To prove Theorem \ref{uniquefact} we will need the following results:

\begin{theorem}
\label{bieri}
A finitely generated nontrivial  \emph{normal} subgroup of either a free group or a fundamental group of a compact surface is of finite index.
\end{theorem}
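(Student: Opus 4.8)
The plan is to argue geometrically via covering spaces, treating the free and the closed hyperbolic surface cases in parallel, since in both a nontrivial finitely generated normal subgroup will turn out to be \emph{convex cocompact}, carrying a canonical compact core that the deck group must preserve. Write $G$ for the ambient group and suppose $N \trianglelefteq G$ is finitely generated, nontrivial, and of infinite index; the goal is a contradiction. Realize $G$ as $\pi_1(X)$, where $X$ is a finite graph when $G$ is free and a closed surface carrying a hyperbolic metric when $G$ is a closed hyperbolic surface group (surfaces with boundary have free $\pi_1$ and so fall under the first case, while $S^2$ and $\mathbb{RP}^2$ have finite $\pi_1$ and are vacuous; the torus and Klein bottle are genuine exceptions, so ``compact surface'' is read with $\chi<0$, matching the hyperbolic setting of the paper). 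Let $p\colon Y \to X$ be the covering corresponding to $N$. Because $N$ is normal, $p$ is regular, with deck group $Q = G/N$ acting freely and properly discontinuously on $Y$; the whole point is to show $Q$ is finite.

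Next I would produce a canonical compact core $C \subseteq Y$ carrying $\pi_1(Y) = N$. In the free case $Y$ is a graph with $\pi_1(Y)=N$ of finite rank, so the core $C$ obtained by iteratively pruning valence-one vertices is a \emph{finite} graph, nonempty because $N \neq 1$ forces $Y$ not to be a tree. In the surface case, lift the hyperbolic metric so that $N$ acts on $\mathbb{H}^2$ as a finitely generated Fuchsian group; since $N \leq G$ with $G$ cocompact and torsion-free, every nontrivial element of $N$ is hyperbolic, so $N$ has \emph{no parabolics}, and a finitely generated Fuchsian group without parabolics is convex cocompact. Hence its convex core $C = \mathrm{hull}(\Lambda_N)/N$ is a \emph{compact} subsurface of $Y = \mathbb{H}^2/N$, again nonempty since $N \neq 1$.

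The key structural input is that normality makes $C$ invariant under the deck group. Each $q \in Q$ is realized by some $s \in G$, and conjugation by $s$ carries the limit set $\Lambda_N$ to $\Lambda_{sNs^{-1}} = \Lambda_N$ (using $sNs^{-1}=N$), so it preserves the convex hull and therefore $C$; in the graph case the same follows because the pruning core is intrinsic to $Y$ and hence preserved by every automorphism, in particular every deck transformation. Now $Q$ acts freely and properly discontinuously on the nonempty compact set $C$. Choosing any $x \in C$, its orbit $Qx$ lies in $C$, is closed and discrete by proper discontinuity, and is therefore finite (a closed discrete subset of a compact space is finite); by freeness $|Q| = |Qx| < \infty$. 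Thus $[G:N] = |Q|$ is finite, contradicting the infinite-index hypothesis and proving the theorem.

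I expect the main obstacle to be establishing compactness of the convex core in the surface case, i.e. the implication ``finitely generated plus no parabolics $\Rightarrow$ convex cocompact.'' This is precisely where ambient hyperbolicity is used, and it is what rules out the abelian counterexamples. It is worth noting that the cohomological route via Bieri's Theorem~\ref{bierithm} only gets one partway: combined with coherence of free and surface groups (every finitely generated subgroup is finitely presented), it forces $N$ to be \emph{free}, but it does not by itself bound the index, so the geometric core argument above is what actually closes the proof.
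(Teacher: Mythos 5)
Your argument is correct, but it is genuinely different from what the paper does: the paper offers no proof at all beyond deferring to the first paragraph of \cite{KarrassSolitar1R}, whose argument is combinatorial, whereas you give a self-contained geometric proof in the style of Greenberg --- pass to the cover $Y$ corresponding to $N$, extract a canonical compact core (the Stallings core of the graph, resp.\ the convex core of the convex cocompact Fuchsian group $N$), observe that normality forces the deck group $Q=G/N$ to preserve that core since $s\Lambda_N=\Lambda_{sNs^{-1}}=\Lambda_N$, and conclude $|Q|<\infty$ from a free, properly discontinuous action on a nonempty compact set. All the steps check out, including the elementary case where $N$ is infinite cyclic (the convex core is then a single closed geodesic, still compact), and the one nontrivial input you correctly isolate --- finitely generated plus no parabolics implies convex cocompact --- is standard. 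You are also right to flag that the theorem as literally stated is false for the torus and Klein bottle ($\mathbb{Z}\times\{0\}\trianglelefteq\mathbb{Z}^2$, and $\langle a\rangle$ in $\langle a,b\mid bab^{-1}=a^{-1}\rangle$, are finitely generated, normal, and of infinite index), so the restriction to $\chi<0$ is not pedantry but a needed correction to the statement; the paper only ever applies the result to hyperbolic surface groups, so nothing downstream is affected. What your route buys is transparency and uniformity: the same ``deck group preserves a canonical compact invariant set'' mechanism handles both cases and makes visible exactly where hyperbolicity is used, which the bare citation in the paper does not. Your closing remark that Bieri's Theorem \ref{bierithm} plus coherence only yields that $N$ is free, without bounding the index, is also accurate and explains why the geometric argument cannot be replaced by the cohomological one.
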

The proof of this theorem is contained in the first paragraph of \cite{KarrassSolitar1R}.

\begin{corollary}
\label{bieri2}
A finitely generated \emph{infinite} normal subgroup $N$  of a finitely generated fuchsian group $G$ is of finite index.
\end{corollary}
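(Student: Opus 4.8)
The plan is to reduce the general Fuchsian case to Theorem \ref{bieri} by passing to a torsion-free finite-index subgroup of $G$. The key structural fact I would use is that a finitely generated Fuchsian group is virtually a free group or a compact surface group. Concretely, since $G$ is a finitely generated subgroup of $\PSL(2,\reals)$ it is linear over a field of characteristic zero, so by Selberg's lemma it contains a torsion-free subgroup of finite index; replacing that subgroup by the intersection of its finitely many $G$-conjugates, I may assume I have a normal, torsion-free, finite-index subgroup $G_0 \trianglelefteq G$. Being a torsion-free finitely generated Fuchsian group, $G_0$ is the fundamental group of the hyperbolic surface $\mathbb{H}^2/G_0$: if this surface is compact then $G_0$ is a compact surface group, and if it is noncompact then $\mathbb{H}^2/G_0$ is homotopy equivalent to a wedge of circles and $G_0$ is free. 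In either case $G_0$ is of the type to which Theorem \ref{bieri} applies.

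Next I would set $N_0 = N \cap G_0$ and verify that it meets the hypotheses of Theorem \ref{bieri} inside $G_0$. Since $N \trianglelefteq G$ and $G_0 \trianglelefteq G$, for any $g_0 \in G_0$ we have $g_0 N_0 g_0^{-1} = (g_0 N g_0^{-1}) \cap (g_0 G_0 g_0^{-1}) = N \cap G_0 = N_0$, so $N_0$ is normal in $G_0$ (indeed in all of $G$). Because $[G:G_0] < \infty$, the index $[N:N_0]$ is bounded by $[G:G_0]$, so $N_0$ has finite index in the finitely generated group $N$ and is therefore itself finitely generated. Moreover $N_0$ is infinite: were it finite it would be trivial, as $G_0$ is torsion-free, and this would force $N$ to be finite, contrary to hypothesis. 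Thus $N_0$ is a nontrivial finitely generated normal subgroup of $G_0$. Applying Theorem \ref{bieri} to $N_0 \trianglelefteq G_0$ yields $[G_0 : N_0] < \infty$, whence $[G:N_0] = [G:G_0]\,[G_0:N_0] < \infty$; since $N_0 \le N \le G$, multiplicativity of the index gives $[G:N] \le [G:N_0] < \infty$, as desired.

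The only genuine content beyond Theorem \ref{bieri} is the virtual classification of finitely generated Fuchsian groups, namely producing the normal torsion-free finite-index $G_0$ and recognizing it as either free or a compact surface group; this is where I expect the main obstacle to lie. It rests on Selberg's lemma together with the identification of torsion-free planar discontinuous groups with surface groups, and requires some care both in normalizing $G_0$ to be normal in $G$ and in treating the noncompact (free) case uniformly with the compact case so that Theorem \ref{bieri} covers both. Once that reduction is in place, everything else is elementary bookkeeping with indices, and the passage from "infinite" to the "nontrivial" hypothesis of Theorem \ref{bieri} is automatic because $G_0$, and hence $N_0$, is torsion-free.
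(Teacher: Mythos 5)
Your proposal is correct and follows essentially the same route as the paper: apply Selberg's lemma to get a torsion-free finite-index subgroup, intersect $N$ with it, and invoke Theorem \ref{bieri}. You supply some details the paper leaves implicit (that the torsion-free subgroup is free or a compact surface group, and that $N\cap G_0$ is infinite hence nontrivial), and the step of replacing $G_0$ by the intersection of its conjugates is unnecessary since $N\cap H$ is automatically normal in $H$ whenever $N\trianglelefteq G$, but neither point changes the substance of the argument.
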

\begin{proof}
Let $N$ be as in the statement of the corollary. By Selberg's lemma, $G$ has a torsion free subgroup $H$ of finite index, and $N\cap H$ is a normal subgroup of $H.$ Since $H$ is  of finite index, it is finitely generated, and since $N \cap H$ is of finite index in $N,$ \emph{it} is finitely generated, hence of finite index in $H,$ hence of finite index in $G.$
\end{proof}

In fact, in the cases of interest, Corollary \ref{bieri2} gives the same result as Theorem \ref{bieri}, because of the following observation:

\begin{lemma}
\label{bieri3}
An infinite discrete group of isometries of an Hadamard manifold $H^n$has no finite normal subgroup.
\end{lemma}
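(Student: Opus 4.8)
The plan is to use two standard facts about Hadamard manifolds: a group of isometries with a bounded orbit fixes a point, and the stabilizer of a point in $\mathrm{Isom}(H^n)$ is compact. Suppose $F$ is a nontrivial finite normal subgroup of $G$, and fix any $x \in H^n$. The orbit $F \cdot x$ is finite, hence bounded, so by the Cartan (Bruhat--Tits) fixed-point theorem it has a unique circumcenter; since $F$ permutes $F \cdot x$, uniqueness of the circumcenter forces $F$ to fix it. Thus $Y := \mathrm{Fix}(F)$ is nonempty. As the intersection of the fixed-point sets of the individual elements of $F$, each of which is a closed totally geodesic (hence convex) submanifold, $Y$ is a closed convex subset of $H^n$. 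Normality of $F$ makes $Y$ invariant under $G$: for $p \in Y$, $g \in G$, $f \in F$ we have $f(gp) = g\bigl((g^{-1}fg)\,p\bigr) = gp$, since $g^{-1}fg \in F$ fixes $p$.

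With $Y$ in hand, the endgame is to produce a single point of $H^n$ fixed by all of $G$. Once $G$ fixes some $p_0$, sending each isometry to its derivative at $p_0$ embeds $\stab(p_0)$ as a closed subgroup of $O(n)$ -- an isometry fixing $p_0$ is determined by $d(\cdot)_{p_0}$ because $\exp_{p_0}$ is a diffeomorphism -- so $\stab(p_0)$ is compact. A discrete subgroup of a compact group is finite, contradicting the hypothesis that $G$ is infinite. Hence no such nontrivial $F$ can exist, which is the assertion of the Lemma.

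The step I expect to be the crux is the reduction just invoked: passing from the $G$-invariant closed convex set $Y$ to an honest $G$-fixed point. When $Y$ is a single point this is immediate, and I would first record that case. In general I would run the circumcenter construction a second time, now for $G$ acting on the $\mathrm{CAT}(0)$ subspace $Y$: if some $G$-orbit in $Y$ were bounded, its circumcenter would be the desired $G$-fixed point. The delicate point is therefore to confine a $G$-orbit to a bounded region of $Y$ -- note that $F$ already lies in the kernel of the $G$-action on $Y$, so it is the induced action of the quotient that must be controlled, and this is where discreteness and the geometry of $Y$ have to be brought to bear. Securing this reduction is the heart of the Lemma, and it is the form in which the statement feeds into Corollary \ref{bieri2}.
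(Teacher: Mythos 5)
Your first half is correct and is in fact a more careful version of the paper's own opening move: the Cartan circumcenter argument gives a point fixed by the finite normal subgroup $F$, the set $Y=\mathrm{Fix}(F)$ is closed, convex, and $G$-invariant by normality, and your endgame (the stabilizer of a point is compact, and a discrete subgroup of a compact group is finite) is exactly the paper's endgame. The paper's proof takes the same first step --- the finite group $N$ fixes a point $p$ --- and then simply \emph{asserts} that the quotient $G/N$ ``must fix $p$, and furthermore, it must fix the distance to $p$,'' concluding that $G/N$ is a discrete subgroup of a compact group and hence finite. That assertion is precisely the reduction you flag as the crux and leave open.

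The gap you identify is genuine, and in the stated generality it cannot be closed, because the lemma as literally written is false. In $\mathbb{H}^2$ let $\gamma$ be a geodesic, $t$ a hyperbolic translation along $\gamma$, and $r$ the reflection in $\gamma$; then $t$ and $r$ commute, so $G=\langle t,r\rangle\cong\mathbb{Z}\times\mathbb{Z}/2$ is an infinite discrete group of isometries with nontrivial finite normal subgroup $F=\langle r\rangle$. Here $Y=\mathrm{Fix}(F)=\gamma$, and $G$ acts on $Y$ through the translations $t^n$, so \emph{every} $G$-orbit in $Y$ is unbounded and no circumcenter argument can produce a $G$-fixed point; the paper's claim that $G/F$ fixes $p$ fails in the same example. (If you wish to exclude reflections, replace $\mathbb{H}^2$ by $\mathbb{H}^3$ and $r$ by the rotation through $\pi$ about $\gamma$: this is orientation-preserving and gives the same counterexample.) What rescues the lemma in the cases the paper actually uses it for (Corollary \ref{bieri2}, Theorem \ref{isomh}) is an implicit extra hypothesis guaranteeing that a nontrivial finite subgroup has a \emph{unique} fixed point --- e.g.\ nontrivial elliptic elements of $\PSL(2,\mathbb{R})$ fix exactly one point of $\mathbb{H}^2$. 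Under that hypothesis your $Y$ is a single point, $G$-invariance forces $G$ to fix it, and your compactness argument finishes the proof cleanly. So your strategy is the right one, but it completes only after restricting the class of groups; the bounded-orbit step you call ``delicate'' is, without such a restriction, impossible rather than merely delicate.
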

\begin{proof}
Let $G$ be our  group, which is the fundamental group of some orbifold $S_G.$ Let $N$ be the finite normal subgroup, and let $S_N$ be the corresponding regular covering space of $S_G.$ Since $N$ is finite, its action on $H$ has a fixed point $p,$ which is an orbifold point of $S_N.$ The space $S_G$ is a quotient of $S_N$ by a group of isometries $H=G/N$ (since the covering is regular), but that group must fix $p,$ and furthermore, it must fix the distance to $p,$ which means that $H$ is a (discrete) subgroup of $SO(n),$ hence finite.
\end{proof}
\begin{theorem}
\label{jaco}
A finite index subgroup of the fundamental group of a compact orientable surface of genus at least $2$ is itself the fundamental group of a hyperbolic surface. An \emph{infinite} index subgroup of the fundamental group of a compact hyperbolic surface is free.
\end{theorem}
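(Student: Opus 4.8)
The plan is to translate both assertions into the language of covering spaces. Write $\Sigma$ for the surface and $\Gamma = \pi_1(\Sigma)$, and let $H \leq \Gamma$ be the subgroup under consideration. Since $\Sigma$ is a manifold, in particular locally path connected and semilocally simply connected, $H$ is realized as $\pi_1(X_H)$ for the covering $p : X_H \to \Sigma$ associated to $H$, the number of sheets being the index $[\Gamma : H]$. The structural fact I would lean on throughout is that a covering space of a surface is again a surface, since a covering map is a local homeomorphism, so $X_H$ is a $2$-manifold without boundary; moreover $p$ pulls back an orientation, so $X_H$ is orientable.

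For the first statement, suppose $n = [\Gamma : H] < \infty$. Then $X_H$ is an $n$-sheeted cover of the compact surface $\Sigma$, hence itself compact, and by the above it is a closed orientable surface. Euler characteristic is multiplicative in the degree of a cover, so $\chi(X_H) = n\,\chi(\Sigma) = n(2 - 2g) < 0$ because $g \geq 2$. Solving $2 - 2h = n(2 - 2g)$ for the genus $h$ of $X_H$ gives $h = 1 + n(g - 1) \geq 2$, so $X_H$ is a compact hyperbolic surface with $\pi_1(X_H) \cong H$, which is exactly the assertion.

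For the second statement I would instead assume $[\Gamma : H] = \infty$. Realizing $\Sigma = \mathbb{H}^2 / \Gamma$, the cover is $X_H = \mathbb{H}^2 / H$, a complete hyperbolic surface whose area is $[\Gamma : H]$ times that of $\Sigma$, hence infinite; equivalently, a covering map out of a compact space is finite-sheeted, so an infinite-sheeted cover cannot be compact. In either form, $X_H$ is a connected \emph{non-compact} surface without boundary, and the whole question reduces to the purely topological claim that such a surface has free fundamental group. I would establish this by exhibiting $X_H$ as homotopy equivalent to a graph (a $1$-dimensional CW complex), since the fundamental group of any graph is free.

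The heart of the matter, and the step I expect to be the main obstacle, is precisely this last claim. The tempting shortcut -- exhaust $X_H$ by compact bordered subsurfaces $Y_1 \subset Y_2 \subset \cdots$, note each deformation retracts onto a finite graph and so has free $\pi_1$, and pass to the union -- does \emph{not} by itself suffice, because a nested union of free groups need not be free, as $\mathbb{Q} = \bigcup_n \frac{1}{n!}\integers$ shows. What is required is a single geometric spine for the entire surface. The cleanest route is Morse-theoretic: a non-compact surface admits a proper Morse function $f : X_H \to [0,\infty)$ with no critical points of index $2$, the point being that index-$2$ critical points (local maxima) can always be pushed off toward the ends when the surface is non-compact. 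Such an $f$ builds $X_H$ from $0$- and $1$-handles alone, so $X_H$ collapses onto a graph and $H = \pi_1(X_H)$ is free. Alternatively one may simply invoke the classification of non-compact surfaces, which yields the same conclusion.
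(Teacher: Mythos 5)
Your argument is correct, but it is worth noting that the paper does not actually prove this statement: it simply cites Jaco's paper (\cite{jacosubgroups}) for the proof, and remarks that Hoare, Karrass, and Solitar later gave purely combinatorial proofs of more general versions for Fuchsian groups with torsion (Theorems \ref{finiteorbi} and \ref{infiniteorbi}). So you are supplying a self-contained proof where the paper supplies a reference. Your treatment of the finite-index case -- pass to the covering space, use multiplicativity of the Euler characteristic, and solve $2-2h = n(2-2g)$ to get $h = 1 + n(g-1) \geq 2$ -- is the standard topological argument and is complete. For the infinite-index case you correctly reduce to the claim that a connected non-compact surface without boundary has free fundamental group, and you deserve credit for explicitly flagging that the naive exhaustion-by-compact-subsurfaces argument is insufficient as stated (a nested union of free groups need not be free); the repair is either your Morse-theoretic route (a proper Morse function on an open surface with no index-$2$ critical points, collapsing the surface onto a graph) or the observation that in an exhaustion by incompressible subsurfaces each $\pi_1(Y_i)$ sits as a free factor in $\pi_1(Y_{i+1})$, and an increasing union of free factors is free. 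Either way the claim is classical and your sketch is sound. The one thing your approach does not deliver, and the cited combinatorial proofs do, is the extension to Fuchsian groups with torsion, which the paper needs elsewhere (Corollary \ref{bieri2} and Theorem \ref{infiniteorbi}); but for the statement as given, restricted to surface groups, your proof is a legitimate and more elementary alternative to the citation.
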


The proof of this theorem is given in \cite{jacosubgroups}. More general results for groups with torsion were obtained by A. Hoare, A. Karrass, and D. Solitar in a pair of nice papers \cite{hksfinite,hksinfinite}, using completely different (and purely combinatorial) methods. Their results for groups with torsion read as follows:
\begin{theorem}[\cite{hksfinite}]
\label{finiteorbi}
Let $F$ be the fundamental group of a compact hyperbolic orbifold. The group $F$ is given by the presentation
\begin{equation}
\label{fuchspres}
F=\langle a_1, b_1, \dotsc, b_n, b_n, c_1, \dotsc, c_t; c_1^{\gamma_1}, \dotsc, c_t^{\gamma_t}, c_1^{-1} \dots c_t^{-1} [a_1, b_1] \dots [a_n, b_n]\rangle.
\end{equation}
Then any finite index subgroup has a presentation of the same type
\end{theorem}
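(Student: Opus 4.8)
The plan is to separate the statement into its two assertions and to reduce the second (closure under passage to finite-index subgroups) to the first (existence of the standard presentation \eqref{fuchspres} for \emph{every} cocompact Fuchsian group). The first assertion is the classical structure theorem for the orbifold fundamental group of a closed hyperbolic $2$-orbifold of genus $n$ with cone points of orders $\gamma_1,\dots,\gamma_t$; I would take it as given and concentrate on showing that a finite-index subgroup is again the fundamental group of such an orbifold.

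Geometrically, $F$ acts properly discontinuously, cocompactly, and by isometries on $\mathbb{H}^2$, with quotient the compact hyperbolic orbifold $\mathcal{O}=\mathbb{H}^2/F$. Given $H\le F$ of finite index, I would restrict this action to $H$: proper discontinuity is inherited automatically, and the quotient $\mathbb{H}^2/H$ is a degree-$[F:H]$ orbifold covering of $\mathcal{O}$, hence compact. Its orbifold Euler characteristic is $[F:H]\,\chi^{\mathrm{orb}}(\mathcal{O})<0$, so $\mathbb{H}^2/H$ is again a compact hyperbolic orbifold. Applying the first assertion to this covering orbifold then produces a presentation of $H$ of the same type, with a new genus $n'$ and new cone orders $\gamma'_1,\dots,\gamma'_{t'}$.

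The step that carries the real content --- and that Hoare, Karrass, and Solitar handle combinatorially in \cite{hksfinite} rather than geometrically --- is the precise accounting of the cone points of the cover. One must check that $\mathbb{H}^2/H$ has only finitely many cone points and identify them: they arise from the $F$-orbits of the fixed points of the elliptic elements, i.e.\ from the action of each cyclic group $\langle c_i\rangle$ on the coset space $F/H$, a cone point of order equal to the length of a $\langle c_i\rangle$-orbit (necessarily a divisor of $\gamma_i$) appearing for each such orbit on which $c_i$ acts nontrivially. Purely combinatorially, this is exactly the output of the Reidemeister--Schreier rewriting process applied to \eqref{fuchspres} relative to a Schreier transversal for $H$: one rewrites each relator $c_i^{\gamma_i}$ together with the long relator $c_1^{-1}\cdots c_t^{-1}[a_1,b_1]\cdots[a_n,b_n]$, and then collapses the resulting presentation to the Fuchsian normal form. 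The genuine difficulty is this collapse --- organizing the Schreier generators and rewritten relators into the shape of \eqref{fuchspres} and verifying that no further relations survive --- which is precisely where the combinatorial bookkeeping and the geometric picture of the covering orbifold must be made to agree.
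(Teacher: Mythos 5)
Your argument is exactly the ``immediate topological proof'' that the paper itself points to in the remark following this theorem (attributing the observation to Fricke and Klein): pass to the covering orbifold $\mathbb{H}^2/H$, note it is again a compact hyperbolic $2$-orbifold, and invoke the classical structure theorem for its fundamental group; the paper otherwise simply cites Hoare--Karrass--Solitar for the purely combinatorial (Reidemeister--Schreier) version, which is the alternative route you sketch at the end. The ``genuine difficulty'' you flag about collapsing the rewritten presentation only arises on the combinatorial route --- on the geometric route the cone-point bookkeeping via $\langle c_i\rangle$-orbits on $F/H$ is all that is needed, and your proof is complete as it stands.
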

\begin{remark} Theorem \ref{finiteorbi} has an immediate topological proof, as had already been observed by Fricke and Klein in \cite{frickeklein}. Hoare, Karrass, and Solitar's proof is completely combinatorial.
\end{remark}

\begin{theorem}[\cite{hksinfinite}]
\label{infiniteorbi}
A subgroup of infinite index of the fundamental group of a (not necessarily compact) hyperbolic orbifold is a free product of cyclic groups.
\end{theorem}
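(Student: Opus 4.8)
The plan is to argue geometrically, realizing subgroups as orbifold covering spaces and reducing to the structure of non-compact $2$-orbifolds. Let $\Gamma$ denote the fundamental group of the hyperbolic orbifold $O$, so that $\Gamma$ acts properly discontinuously on $\mathbb{H}^2$ with $O = \mathbb{H}^2/\Gamma$; since $O$ is hyperbolic it is a good (developable) orbifold, and subgroups of $\Gamma$ are classified by orbifold coverings. Given a subgroup $H \le \Gamma$ of infinite index, let $\tilde O = \mathbb{H}^2/H$ be the corresponding covering orbifold, so that $\pi_1^{\mathrm{orb}}(\tilde O) \cong H$ and the covering $\tilde O \to O$ has infinitely many sheets. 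The first step is to observe that $\tilde O$ is \emph{non-compact}: an infinite-sheeted cover of $O$ has infinite hyperbolic area, so it cannot be closed (or even of finite area). This is the exact analogue of the dichotomy in Theorem \ref{jaco}, where infinite index forced a torsion-free surface subgroup to be free; here the torsion will be carried by isolated cone points, and in the Fuchsian (orientation-preserving) setting these are the only orbifold singularities, matching the presentation \eqref{fuchspres}.

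The heart of the argument is the following topological input: a connected non-compact $2$-orbifold is orbifold-homotopy-equivalent to a $1$-dimensional orbifold. For an honest non-compact surface this is the classical fact that a non-compact $n$-manifold has the homotopy type of a CW complex of dimension at most $n-1$, so for $n=2$ one obtains a graph, hence a free fundamental group. I would run the orbifold version of this collapse: choose a proper orbifold handle/CW structure on $\tilde O$ in which the cone points appear as $0$-cells carrying finite cyclic isotropy, and, using non-compactness, push the $2$-cells off to infinity so that $\tilde O$ deformation retracts onto its $1$-skeleton. The output is a $1$-dimensional orbifold, i.e.\ a graph of groups whose vertex groups are the (finite cyclic) cone-point stabilizers and whose edge groups are trivial.

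The conclusion is then immediate from Bass--Serre theory (equivalently, the orbifold Seifert--van Kampen theorem): the fundamental group of a graph of groups with trivial edge groups is the free product of the vertex groups with the free group of the underlying graph, and here every vertex group is finite cyclic while the graph contributes infinite cyclic free factors. Hence $H \cong \pi_1^{\mathrm{orb}}(\tilde O)$ is a free product of cyclic groups. Note that this makes no finiteness assumption on $H$, so it handles infinitely generated subgroups as well. As a cross-check, when $O$ is itself non-compact the group $\Gamma$ is already a free product of cyclic groups, and the Kurosh subgroup theorem then gives the statement for \emph{every} subgroup, consistent with the fact that all covers of a non-compact $O$ remain non-compact.

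The step I expect to be the main obstacle is the second one: making the orbifold deformation retraction rigorous. One must produce a cell/handle decomposition adapted to the cone locus and verify that the retraction of a non-compact surface onto its $1$-skeleton can be carried out equivariantly (equivalently, $\Gamma$-equivariantly upstairs on $\mathbb{H}^2$) so that the cone points persist as isolated vertices and no new relations among the generators are introduced — in particular so that the long surface relator of \eqref{fuchspres} genuinely disappears rather than reappearing in disguised form. This is precisely the delicate point that the purely combinatorial method of Hoare--Karrass--Solitar sidesteps: one instead applies Reidemeister--Schreier rewriting to the presentation \eqref{fuchspres} and shows that the single long relator cannot survive over an infinite coset space, leaving only the cyclic torsion relations and hence a free product of cyclic groups.
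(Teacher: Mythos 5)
Your argument is correct in outline, but it takes a genuinely different route from the source the paper relies on: the paper offers no proof of Theorem \ref{infiniteorbi}, and explicitly notes that Hoare--Karrass--Solitar \cite{hksinfinite} proceed by ``completely different (and purely combinatorial) methods'' -- namely Reidemeister--Schreier rewriting of the presentation \eqref{fuchspres} over an infinite coset space, which is exactly the alternative you name in your last sentence. What you propose instead is the covering-space argument in the spirit of the topological proof of Theorem \ref{jaco} from \cite{jacosubgroups}: pass to $\tilde O=\mathbb{H}^2/H$, note it is non-compact (a compact cover has finite, discrete, hence finite fibers, so infinite index forces non-compactness), and collapse to a one-dimensional spine. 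The step you flag as the obstacle is real but standard: since a Fuchsian quotient has only isolated cone points, triangulate the underlying non-compact surface $S$ with the cone points among the vertices; then $\pi_1(S\setminus\{\text{cone points}\})$ is free with the peripheral loops $c_i$ extendable to a free basis (the end at infinity absorbs the would-be surface relation), and $\pi_1^{\mathrm{orb}}(\tilde O)$ is obtained by imposing only $c_i^{\gamma_i}=1$, giving a free product of cyclic groups by Bass--Serre theory. One caveat to record: your argument as written is for Fuchsian groups proper (orientation-preserving, cone points only, matching \eqref{fuchspres}); if reflections were permitted the singular locus would contain mirror arcs and corner reflectors and the vertex groups of the spine would no longer all be cyclic. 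The trade-off between the two approaches is the expected one: your geometric argument is conceptually transparent, handles infinitely generated subgroups uniformly, and explains where the torsion free factors come from, while the combinatorial proof avoids any equivariant retraction and needs no geometric model of the group.
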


\begin{theorem}
\label{homeo}
Let $S_g$ and $S_h$ be compact hyperbolic surfaces of genus $g$ and $h$ respectively. There is a surjection from $\pi_1(S_g)$ to $\pi_1(S_h)$ if and only if $g\geq h.$ If $g > h,$ the kernel of such a map is an infinitely generated free group, if $g=h$ then the map is an isomorphism.
\end{theorem}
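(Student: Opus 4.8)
The plan is to establish the three assertions --- existence of a surjection, freeness and infinite generation of the kernel when $g>h$, and injectivity when $g=h$ --- largely separately, using abelianization for the numerical criterion and the structural Theorems \ref{bieri} and \ref{jaco} for the kernel.

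First I would settle the equivalence ``a surjection exists $\iff g\ge h$''. For the forward implication, a surjection $\phi\colon\pi_1(S_g)\twoheadrightarrow\pi_1(S_h)$ descends to a surjection of abelianizations $\mathbb{Z}^{2g}=H_1(S_g)\twoheadrightarrow H_1(S_h)=\mathbb{Z}^{2h}$, and a free abelian group of rank $2g$ surjects onto one of rank $2h$ only when $2g\ge 2h$, i.e.\ $g\ge h$. For the converse, when $g\ge h$ I would write $\pi_1(S_g)=\langle a_1,b_1,\dots,a_g,b_g\mid\prod_{i=1}^g[a_i,b_i]\rangle$ and send $a_i,b_i$ to the corresponding generators of $\pi_1(S_h)$ for $i\le h$ and to the identity for $i>h$; the defining relator then maps to $\prod_{i=1}^h[a_i,b_i]=1$, so this is a well-defined surjection (geometrically, the map collapsing the last $g-h$ handles).

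Next, for $g>h$ I would analyze the kernel $K=\ker\phi$. It is nontrivial, since already the induced map $\mathbb{Z}^{2g}\to\mathbb{Z}^{2h}$ on $H_1$ has nontrivial kernel; it is normal by construction; and it has infinite index, because the quotient $\pi_1(S_h)$ is infinite. Theorem \ref{jaco} then shows $K$ is free, and Corollary \ref{bieri2} (equivalently Theorem \ref{bieri}) forces $K$ to be infinitely generated: a finitely generated nontrivial normal subgroup of a surface group has finite index, which $K$ does not. Hence $K$ is an infinitely generated free group.

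Finally, the case $g=h$ is where an extra ingredient is required, and I expect it to be the main obstacle: the index and freeness results above do not, by themselves, rule out a nontrivial (necessarily infinitely generated free) kernel. The clean way around this is to invoke the fact that surface groups are Hopfian, being finitely generated and residually finite (indeed linear), so that any surjective endomorphism is an isomorphism. Concretely, given a surjection $\phi\colon\pi_1(S_g)\twoheadrightarrow\pi_1(S_h)$ with $g=h$, I would compose with a fixed isomorphism $\alpha\colon\pi_1(S_h)\xrightarrow{\sim}\pi_1(S_g)$ to obtain a surjective endomorphism $\alpha\circ\phi$ of $\pi_1(S_g)$; Hopfianity makes it injective, whence $\phi=\alpha^{-1}\circ(\alpha\circ\phi)$ is an isomorphism.
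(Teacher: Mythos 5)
Your proof is correct, but it is considerably more explicit than the paper's, which disposes of the whole statement with the single line ``This follows immediately from Theorem \ref{jaco}.'' The parts of your argument that genuinely do follow from the paper's quoted ingredients are the kernel analysis for $g>h$: infinite index forces freeness by Theorem \ref{jaco}, and Theorem \ref{bieri} (or Corollary \ref{bieri2}) rules out finite generation of a nontrivial infinite-index normal subgroup --- this matches what the paper must intend. What you add, and what the paper silently omits, are the other two pieces: the numerical criterion (abelianization for necessity, the handle-collapsing map for sufficiency) and, more importantly, the $g=h$ case, where you correctly observe that Theorem \ref{jaco} alone does not exclude an infinitely generated free kernel and that one needs Hopficity of surface groups (finitely generated plus residually finite, via Malcev). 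That is exactly the right supplementary ingredient, and your proof is the more honest one; the paper's citation of Theorem \ref{jaco} does not by itself deliver the isomorphism statement. One small stylistic point: in the $g>h$ case, the nontriviality of $\ker\phi$ is cleanest phrased contrapositively --- if $\phi$ were injective it would be an isomorphism and hence induce an isomorphism $\mathbb{Z}^{2g}\cong\mathbb{Z}^{2h}$ --- since an element of the kernel of the map on $H_1$ only maps into the commutator subgroup of $\pi_1(S_h)$, not necessarily to the identity; but the conclusion you draw is of course correct.
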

\begin{proof}
This follows immediately from Theorem \ref{jaco}.
\end{proof}

\begin{theorem}[9-Lemma]
\label{ninelemma}
Consider two homomorphisms from a group $G$ to groups $S_1$ and $S_2,$ thus:
\begin{gather*}
1\rightarrow N \xrightarrow{i} G \xrightarrow{\psi} S_2 \rightarrow 1\\
1\rightarrow M \xrightarrow{i} G \xrightarrow{\phi} S_1 \rightarrow 1,
\end{gather*}
where $i$ represents the inclusion map. Then, the following diagram (where all the maps are natural) commutes.

\[
\begin{CD}
@. 1 @. 1 @. 1 @. \\
@. @VVV @VVV @VVV @.\\
1 @>>>{K\cap N} @>i>> N @>{\phi}>> {M =N/(K\cap N)=KN/K}@>>> 1\\
@. @ViVV @ViVV @ViVV @. \\
1 @>>> K @>i>> G @>{\phi}>> {S_1} @>>> 1\\
@. @V{\psi}VV @V{\psi}VV @V{\overline{\psi}}VV @. \\
1 @>>> R={K/(K\cap N) = KN/N} @>i>> S_2 @>{\overline{\phi}}>> Q @>>> 1\\
@. @VVV @VVV @VVV @. \\
@. 1 @. 1 @. 1 @. 
\end{CD}
\]
\end{theorem}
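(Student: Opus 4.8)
The plan is to realize the entire array as a consequence of the second and third isomorphism theorems, rather than as a diagram chase in an abelian category (which is unavailable here, since $G$ is nonabelian). The single construction driving everything is the formation of the subgroup $KN \leq G$ together with the intersection $K \cap N$. Because $K = \ker\phi$ and $N = \ker\psi$ are both normal in $G$, their product $KN = NK$ is again a normal subgroup of $G$, and $K \cap N$ is normal in each of $K$, $N$, and $G$. I would set $Q := G/KN$ at the outset; the third isomorphism theorem then identifies $Q$ with both $S_2/R$ and $S_1/M$, which is exactly what forces the lower-right corner of the diagram to be consistent.

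First I would produce the four ``new'' groups with their stated identifications. The restriction $\phi|_N : N \to S_1$ has image $NK/K$ and kernel $N \cap K$, so the second isomorphism theorem gives $M := \phi(N) = NK/K \cong N/(K\cap N)$, which is the top row. Symmetrically, $\psi|_K : K \to S_2$ has image $KN/N$ and kernel $K \cap N$, giving $R := \psi(K) = KN/N \cong K/(K\cap N)$, the left column. Since $N$ is normal in $G$, its image $M$ is normal in $S_1 = G/K$; likewise $R$ is normal in $S_2 = G/N$. Hence $S_1/M$ and $S_2/R$ are genuine groups, both canonically isomorphic to $Q = G/KN$.

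Next I would define the induced maps as the ones visibly making the array natural. The map $\overline{\phi} : S_2 \to Q$ is the projection $G/N \to G/KN$ (well-defined since $N \subseteq KN$), with kernel $KN/N = R$; the map $\overline{\psi} : S_1 \to Q$ is the projection $G/K \to G/KN$, with kernel $KN/K = M$. This gives exactness of the bottom row and the right column. The top row and the left column are the second-isomorphism-theorem sequences described above, the middle row and middle column are the two given extensions, and so each of the three rows and three columns is a short exact sequence.

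The only point requiring genuine care --- and the reason the statement is not purely formal --- is that the classical nine-lemma diagram chase valid in an abelian category is unavailable for nonabelian $G$; everything must instead be deduced from honest isomorphism theorems, and this works \emph{only} because $K$ and $N$ are normal, which is what guarantees that $KN$ is a subgroup (indeed normal) and that $M$, $R$ are normal in $S_1$, $S_2$ respectively. Once normality is in hand, commutativity of all nine squares is automatic: every map in the array is induced either by one of the three quotient homomorphisms $G \to G/K$, $G \to G/N$, $G \to G/KN$ or by an inclusion of subgroups, so each square commutes already at the level of representatives in $G$, and the verification reduces to chasing a single element $g \in G$ around each square.
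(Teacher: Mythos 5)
Your proof is correct, but it takes a genuinely different route from the paper: the paper's entire proof is the single sentence ``This is just the nine-lemma of homological algebra,'' i.e.\ a bare citation of the abelian-category nine-lemma, whereas you construct the whole array by hand from the second and third isomorphism theorems. Your version is arguably the more honest one here, since the category of groups is not abelian and the textbook nine-lemma does not literally apply; the content that makes the diagram work in this setting is exactly what you isolate --- $K=\ker\phi$ and $N=\ker\psi$ are normal, hence $KN$ is a normal subgroup, $M=\phi(N)=NK/K\cong N/(K\cap N)$ and $R=\psi(K)=KN/N\cong K/(K\cap N)$ are normal in $S_1$ and $S_2$ respectively, and $Q=G/KN$ is simultaneously $S_1/M$ and $S_2/R$ by the third isomorphism theorem, with commutativity of every square checked on representatives $g\in G$. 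What the paper's one-line appeal buys is brevity; what your argument buys is an actual verification that all nine objects and all twelve maps exist and are exact in the nonabelian setting, which is the version of the statement the rest of the paper (in particular the proof of Theorem \ref{uniquefact} and Lemmas \ref{solvablethm} and \ref{solvablethm2}) actually uses. One cosmetic remark: the statement of the theorem writes $M$ for the kernel of $\phi$ in the hypothesis but then uses $K$ for that kernel and $M$ for $\phi(N)$ in the diagram; you silently adopt the intended reading ($K=\ker\phi$), which is the right call.
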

\begin{proof}
This is just the nine-lemma of homological algebra.
\end{proof}
\begin{remark} We will use the notation in the diagram in Lemma \ref{ninelemma}
\end{remark}

Consider now a pair of homomorphisms given by the two exact sequences in the statement of Theorem \ref{uniquefact}, and fit them into a diagram as in the statement of Theorem \ref{ninelemma}.

By Theorem \ref{bieri}, the group $M$ (which is a normal subgroup of $S_1$, and finitely generated as the image of the finitely generated group $N$) is either trivial or of finite index in $S_1$.
\subsection*{Case 1: $M = \{1\}$} It follows that $K\cap N = N,$ so $N \subseteq K.$ It also follows that $Q\simeq  S_1.$ Since $K/(K\cap N)$ is a finitely generated normal subgroup of $S_2,$ it must be trivial by Theorem \ref{bieri}, which means that $K = N,$ and therefore the two homomorphisms differ by post-composition with an automorphism of $S_1(\simeq S_2).$

\subsection*{Case 2: $M$ is of finite index in $S_1.$} This means that $KN/N$ is of finite index in $S_2,$ which means that $KN$ is of finite index in $G.$ Now, if $N$ is isomorphic to $M,$ it follows that $K\cap N = \{1\},$ and so $G$ is a finite extension of a direct product of surface groups. If the extension is nontrivial, then $N$ is isomorphic to a proper finite subgroup of $S_1$ and so $(g(S_1) -1) |Q| = g(M)-1,$ and also $(g(S_2)-1)|Q| = g(K) -1,$ by the same argument.

 If $N$ is \emph{not} isomorphic to $M,$ things are more complicated.  If $N, K, S_1, S_2$ are surface groups, we know that $g(N)> g(M) \geq g(S_1).$ Also, $K/(K\cap N)$ is of finite index in $S_2,$ which means that $g(K)>g(K/(K\cap N)) \geq g(S_2).$ That the relationship Eq. \eqref{johnsoneq} holds when all the groups are fundamental groups of compact surfaces was shown by F. E. A. Johnson in \cite{johnsonrigid} -- the proof is based on the spectral sequence argument, which shows that in the extension $1\rightarrow N \rightarrow G \rightarrow B \rightarrow 1,$ when $N, B$ are surface groups, the Euler characteristic of $G$ is defined and is equal to the product of the Euler characteristics of $N$ and of $B.$ In the "topological case", where $G = \pi_1(M^4),$ where $M^4$ is a surface bundle over a surfaces, we know from the long exact sequence of the fibration that $M^4$ is aspherical, and so the Euler characteristic of $G$ is equal to the Euler characteristic of $M^4.$

\begin{definition}
We call the smallest normal subgroup in a group $G$ containing an element $x$ by $\nch{x},$ and similarly the smallest normal subgroup containing a set $S$ of elements by $\nch{S}.$
\end{definition}

\begin{lemma}
\label{normalclosure}
 Let $G$ be a fuchsian group containing at least one hyperbolic element. Then $\nch{x}$ contains an infinite cyclic subgroup for any element $x \in G.$
\end{lemma}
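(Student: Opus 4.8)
The plan is to dispose of the easy case first and then reduce the hard case to the two facts already available: Lemma~\ref{bieri3} and the structure theory of infinite Fuchsian groups. Throughout I assume $x\neq 1$, since otherwise $\nch{x}$ is trivial and the statement is vacuous. First, observe that $G$ is infinite: the hypothesis furnishes a hyperbolic element $h\in G$, and a hyperbolic isometry has infinite order, so $\langle h\rangle\leq G$ is already infinite cyclic. In particular, if the given $x$ is itself of infinite order (that is, $x$ is hyperbolic or parabolic), then $\langle x\rangle\leq\nch{x}$ is the desired infinite cyclic subgroup and we are done.

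The remaining case is that $x$ is elliptic, hence of finite order, so that $x$ alone does not generate an infinite cyclic group; here I would argue non-constructively. The subgroup $\nch{x}$ is normal in $G$ and nontrivial. Since $G$ is an infinite discrete group of isometries of the Hadamard manifold $\mathbb{H}^2$, Lemma~\ref{bieri3} guarantees that $G$ has no nontrivial finite normal subgroup; were $\nch{x}$ finite it would be precisely such a subgroup, so $\nch{x}$ must be infinite. Finally, $\nch{x}$ is itself a (discrete) Fuchsian group, and an infinite Fuchsian group cannot consist entirely of elliptic elements: a discrete subgroup of $\PSL(2,\reals)$ all of whose elements are elliptic fixes a common point of $\mathbb{H}^2$, is therefore conjugate into $\mathrm{SO}(2)$, and by discreteness is finite cyclic. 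Hence $\nch{x}$ contains a parabolic or hyperbolic element, which has infinite order and generates the infinite cyclic subgroup we seek.

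The main obstacle is this last step — ruling out that the infinite group $\nch{x}$ is an infinite torsion group. For abstract groups such behaviour can certainly occur, so the argument genuinely uses the Fuchsian (discreteness) hypothesis through the elementary-versus-nonelementary classification of subgroups of $\PSL(2,\reals)$; this is where the geometry of the hyperbolic plane enters in an essential way.

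As an alternative that sidesteps the classification, I could argue constructively from the hyperbolic $h$: form the commutator $[x,h^{N}]=x\,(h^{N}x^{-1}h^{-N})\in\nch{x}$, which is the product of the rotation $x$ about its fixed point $p$ and the opposite rotation $h^{N}x^{-1}h^{-N}$ about the point $h^{N}p$. A North--South dynamics (ping-pong) argument shows that for $N$ large this element is hyperbolic. The difficulty in that route is of the opposite kind: one must verify a displacement/trace estimate confirming that the product of two equal-and-opposite elliptic rotations about the far-separated points $p$ and $h^{N}p$ is genuinely hyperbolic rather than parabolic or elliptic, which amounts to checking that a $\cosh\bigl(d(p,h^{N}p)\bigr)$ term drives the trace past $2$ — a short but nontrivial computation in hyperbolic trigonometry.
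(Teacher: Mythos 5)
Your proof is correct, but it takes a genuinely different route from the one in the paper. You argue softly: $\nch{x}$ is a nontrivial normal subgroup, Lemma~\ref{bieri3} forces it to be infinite, and the classification of elementary subgroups of $\PSL(2,\reals)$ (a purely elliptic subgroup has a common fixed point, hence a discrete one is finite cyclic) forces an infinite discrete subgroup to contain a parabolic or hyperbolic element. The paper instead gives a hands-on computation: it conjugates the elliptic $x$ by powers of a hyperbolic $\gamma$, diagonalizes $\gamma$, and shows that $\abs{\tr(\gamma^k x \gamma^{-k}\, x)}\to\infty$ unless the off-diagonal entries of $x$ vanish, which would force $x$ to share fixed points with $\gamma$ and contradict ellipticity; thus $\gamma^k x \gamma^{-k} x\in\nch{x}$ is eventually hyperbolic. (The paper's accompanying remark explicitly anticipates that other proofs exist and says the given one was chosen for being completely elementary.) The trade-off is clear: the paper's argument is self-contained, explicit, and in fact never uses discreteness of $G$, only that it sits in $\PSL(2,\reals)$ and contains a hyperbolic element; yours is shorter and conceptually cleaner but imports the nontrivial classical fact that an all-elliptic subgroup of $\PSL(2,\reals)$ fixes a point --- exactly the fact that rules out the infinite-torsion scenario you correctly identify as the crux. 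Your sketched constructive alternative via $[x,h^N]$ is essentially the paper's proof in commutator form, and completing it would require precisely the trace estimate the paper carries out. One stylistic note: you could streamline your main argument by observing directly that if $\nch{x}$ were purely elliptic its fixed-point set would be a single point left invariant by all of $G$ (by normality), contradicting the presence of a hyperbolic element; this subsumes the appeal to Lemma~\ref{bieri3}.
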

\begin{proof}
The only time when the statement has content is when $x$ is an elliptic element (otherwise, $x$ itself is of infinite order). Let $\gamma \in G$ be a hyperbolic element, and denote 
$x_k = \gamma^k x \gamma^{-k},$ where $x = x_0.$ We can pick a basis where $\gamma$ is diagonal, so that $\gamma = \bigl(
\begin{smallmatrix}
\lambda & 0 \\ 0 & \lambda^{-1}
\end{smallmatrix}
\bigr)
.$
In the same basis, $x = \bigl(
\begin{smallmatrix}
a & b\\c&d
\end{smallmatrix}
\bigr),$
and a computation shows that $x_k =\bigl(
\begin{smallmatrix}
\lambda^k a & \lambda^k b\\\lambda^{-k}c & \lambda^{-k}d
\end{smallmatrix}
\bigr).$
It follows that $\tr x_k x = a^2 + d^2 ( \lambda^k+\lambda^{-k})(b+c).$

If $b+c\neq 0,$ then $\lim_{k\rightarrow \infty}|\tr x_k x|=\infty,$ and so $x_k x$ will be eventually a hyperbolic element. The only way this argument could fail is if $b+c = 0,$ but then we can try applying the argument to some $x_m$ in place of $x.$ The only way it could fail there is if $\lambda^m b + \lambda^{-m} c = 0.$ The only way this could happen for \emph{all} $m,$ is if $b=c=0,$ but that means that $x$ has the same fixed points as $\gamma,$ which contradicts the assumption that $x$ was elliptic.
\end{proof}

\begin{remark}
There are numerous other possible proofs of Lemma \ref{normalclosure}, but the one we give is completely elementary.
\end{remark}
\begin{lemma} Assume $G$ \emph{is} a direct product of  fuchsian groups $N$ and $K,$ and assume that $L$ is a normal word-hyperbolic subgroup of $G.$ Then, either $L$ is a normal subgroup of $N$ or a normal subgroup of $K.$
\end{lemma}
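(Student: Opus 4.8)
The plan is to argue by contradiction, ruling out the possibility that $L$ has nontrivial image under both coordinate projections. Write $\pi_N\colon G\to N$ and $\pi_K\colon G\to K$ for the two projections of $G=N\times K$, and set $A=\pi_N(L)$ and $B=\pi_K(L)$. Since $L\le\ker\pi_N=K$ exactly when $A=1$, and $L\le\ker\pi_K=N$ exactly when $B=1$, it suffices to show that $A=1$ or $B=1$. I note at the outset that the argument uses that $N$ and $K$ are \emph{nonelementary} (which is the standing hypothesis here and holds for compact hyperbolic surface groups): for $N=K=\mathbb{Z}$ the diagonal $\mathbb{Z}\le\mathbb{Z}\times\mathbb{Z}$ is a normal word-hyperbolic subgroup lying in neither factor, so some such assumption is genuinely needed. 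Accordingly I assume $A\neq1$ and $B\neq1$ and seek a contradiction.

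First I would record that $A\trianglelefteq N$ and $B\trianglelefteq K$: conjugating $L$ by $(n,1)$ and applying $\pi_N$ gives $nAn^{-1}=A$, and symmetrically for $B$. The crucial step is to extract information from normality at the level of the intersections $L_N=L\cap N$ and $L_K=L\cap K$. Given $a\in A$, choose $b$ with $(a,b)\in L$; for any $n\in N$ normality gives $(nan^{-1},b)=(n,1)(a,b)(n,1)^{-1}\in L$, whence
\[
([n,a],1)=(nan^{-1},b)(a,b)^{-1}\in L\cap(N\times\{1\})=L_N .
\]
Thus $[A,N]\subseteq L_N$, and symmetrically $[B,K]\subseteq L_K$. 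Note that $[A,N]$ is itself normal in $N$, since $A\trianglelefteq N$.

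Next I would show that $L_N$ is infinite. If $L_N$ were finite, then $[A,N]$ would be a finite normal subgroup of the infinite Fuchsian group $N$, hence trivial by Lemma \ref{bieri3}; but $[A,N]=1$ says $A\subseteq Z(N)$, and a nonelementary Fuchsian group has trivial center, forcing $A=1$ — contrary to assumption. So $L_N$ is infinite, and the same argument gives that $L_K$ is infinite. Now $L_N$ and $L_K$ are infinite subgroups of the Fuchsian groups $N$ and $K$, so (using Selberg virtual torsion-freeness, as in Corollary \ref{bieri2}) each contains an element of infinite order, say $a\in L_N$ and $b\in L_K$. These commute because they lie in complementary direct factors, and $\langle a\rangle\cap\langle b\rangle\subseteq N\cap K=1$, so $\langle a,b\rangle\cong\mathbb{Z}^2\le L$. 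This contradicts the word-hyperbolicity of $L$, which admits no $\mathbb{Z}^2$ subgroup. Hence $A=1$ or $B=1$, as required.

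The step I expect to be the main obstacle is precisely the one the commutator computation is designed to overcome: a priori $L$ could sit inside $N\times K$ in a \emph{twisted} or diagonal fashion, with both $L_N$ and $L_K$ small, so that the $\mathbb{Z}^2$ obstruction is invisible from the naive intersections. The containment $[A,N]\subseteq L_N$ is what rules this out, as it converts ``$L_N$ small'' into ``$A$ central,'' which the triviality of the center of a nonelementary Fuchsian group forbids. One could alternatively run a Goursat subdirect-product analysis together with Corollary \ref{bieri2}, but the commutator trick has the advantage of avoiding the resulting case distinctions.
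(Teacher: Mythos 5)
Your proof is correct, and it follows the same broad strategy as the paper -- argue by contradiction and exhibit a $\mathbb{Z}\times\mathbb{Z}$ inside $L$, which is incompatible with word-hyperbolicity -- but the mechanism by which you produce the $\mathbb{Z}^2$ is genuinely different, and in some ways more careful. The paper picks a single element $l=(n_l,k_l)\in L$ with both coordinates nontrivial, asserts that its normal closure satisfies $\nch{l}_G=\nch{n_l}_N\times\nch{k_l}_K$, and then applies Lemma \ref{normalclosure} (proved by a trace computation) to each factor. That displayed equality of normal closures is not literally true in general -- in $\mathbb{Z}\times\mathbb{Z}$ the normal closure of a diagonal generator is just the diagonal -- and your commutator identity $([n,a],1)=(nan^{-1},b)(a,b)^{-1}\in L\cap N$ is precisely the honest computation that underlies it: it shows $L$ contains $[A,N]\times 1$ and $1\times[B,K]$, which is all one actually needs. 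From there you route through Lemma \ref{bieri3} (no finite normal subgroups), triviality of the center of a nonelementary Fuchsian group, and Selberg's lemma to get commuting infinite-order elements, whereas the paper leans on Lemma \ref{normalclosure}; your version thus makes that lemma unnecessary for this application. You are also right to flag that nonelementarity is a genuinely needed hypothesis (your $\mathbb{Z}\times\mathbb{Z}$ diagonal example shows the statement fails for elementary factors), a point the lemma's statement leaves implicit and which is exactly where the paper's ``easy to see'' equality breaks down.
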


\begin{proof}
We will write an element $g \in G$ as $g = (n, k),$ with $n \in N, k\in K.$ If the conclusion of the Lemma does not hold, then there exists $l \in L,$ with $l = (n_l, k_l),$ with neither $n_l$ nor $k_l$ the identity.  It is easy to see that ${\nch l}_G = {\nch n_l}_N \times {\nch k_l}_K.$ By Lemma \ref{normalclosure}, it follows that ${\nch l}_G$ contains a direct product of infinite cyclic subgroups, hence is not word hyperbolic.
\end{proof}

\section{Solvable fiber}
\label{solfib}
\begin{definition}
We call a group $G$ \textbf{normally insoluble} if $G$ has no  nontrivial solvable normal subgroups.
\end{definition}
\begin{lemma}
\label{solvablethm}
Suppose 
\begin{gather*}
1\rightarrow N \xrightarrow{i} G \xrightarrow{\psi} S_2 \rightarrow 1\\
1\rightarrow K \xrightarrow{i} G \xrightarrow{\phi} S_1 \rightarrow 1,
\end{gather*}
where $N,K$ are solvable groups, and $S_1, S_2$ are normally insoluble groups. Then $N = K.$
\end{lemma}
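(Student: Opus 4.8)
The plan is to use a single elementary observation twice, once for each extension: the homomorphic image of a normal solvable subgroup under a surjection is again a normal solvable subgroup of the target, and the normal insolubility hypothesis forces such an image to collapse. Since $N = \ker\psi$ and $K = \ker\phi$ are by construction normal subgroups of $G$, and $\phi,\psi$ are surjective by exactness, this machinery applies directly.

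First I would push $N$ forward along $\phi\colon G \to S_1$. Because $N$ is normal in $G$ and $\phi$ is onto, $\phi(N)$ is normal in $\phi(G) = S_1$; because $N$ is solvable, its image $\phi(N)$ is solvable (solvability is inherited by homomorphic images). Thus $\phi(N)$ is a solvable normal subgroup of the normally insoluble group $S_1$, so $\phi(N) = \{1\}$. This is exactly the statement that $N \subseteq \ker\phi = K$.

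Next I would run the symmetric argument with the roles of the two extensions interchanged: pushing $K$ forward along $\psi\colon G \to S_2$, the same reasoning shows $\psi(K)$ is a solvable normal subgroup of the normally insoluble group $S_2$, hence trivial, so $K \subseteq \ker\psi = N$. Combining the two inclusions gives $N = K$, as claimed.

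I do not expect a genuine obstacle here, since every step is a standard closure property (solvability under images, normality under surjections). The only point that must be handled with care is that \emph{surjectivity} of $\phi$ and $\psi$ is essential: it is what guarantees that $\phi(N)$ is normal in all of $S_1$ (rather than merely in the image $\phi(G)$), and this is supplied by the exactness of the two sequences. If one wished to weaken the hypotheses, the natural place to look would be whether ``normally insoluble'' can be relaxed to forbidding only \emph{abelian} normal subgroups; but for the stated lemma the argument above suffices verbatim.
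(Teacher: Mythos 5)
Your proof is correct and is essentially the paper's own argument: the paper identifies $\phi(N)$ with the group $M = KN/K \cong N/(K\cap N)$ in its nine-lemma diagram, observes that it is a solvable normal subgroup of $S_1$ and hence trivial, giving $N \subseteq K$, and then argues symmetrically. You have simply stated the same push-forward argument directly, without routing it through the diagram.
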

\begin{proof}
Using the diagram in the statement of Lemma \ref{ninelemma}, we note that the group $M$ is a solvable normal subgroup of $S_1$ and,  by hypothesis trivial, which implies that $N \subseteq K.$ Similarly, $K\subseteq N,$ and the result follows.
\end{proof}
\begin{lemma}
\label{solvablethm2}
There is no pair of extensions
\begin{gather*}
1\rightarrow N \xrightarrow{i} G \xrightarrow{\psi} S_2 \rightarrow 1\\
1\rightarrow K \xrightarrow{i} G \xrightarrow{\phi} S_1 \rightarrow 1,
\end{gather*}
where $N$ is solvable and $K$ and $S_1$ normally insoluble.\end{lemma}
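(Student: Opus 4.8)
The plan is to reuse the opening move of Lemma \ref{solvablethm} and then convert it into a contradiction using the normal insolubility of $K$. I would place the two given extensions into the diagram of Lemma \ref{ninelemma}, keeping its notation, so that $M = \phi(N) = N/(K\cap N)$ denotes the image of the solvable kernel $N$ under the surjection $\phi$.

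First I would note that $M$ is solvable, being a quotient of the solvable group $N$, and that it is a normal subgroup of $S_1 = \phi(G)$, being the image of the normal subgroup $N \triangleleft G$ under $\phi$. Exactly as in Lemma \ref{solvablethm}, the hypothesis that $S_1$ is normally insoluble forces $M = \{1\}$, and hence $N \subseteq \ker\phi = K$.

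The new ingredient is a second application of normal insolubility, this time inside $K$. Since $N$ is normal in $G$ and is contained in $K$, it is automatically normal in $K$: for every $k \in K \subseteq G$ we have $kNk^{-1} = N$. Thus $N$ is a nontrivial solvable normal subgroup of $K$, contradicting the assumption that $K$ is normally insoluble. This rules out the existence of such a pair of extensions.

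I do not expect a genuine obstacle; the argument is a two-step diagram chase. The only point worth flagging is the bookkeeping: normal insolubility is invoked twice in sequence, once for $S_1$ (to force $N \subseteq K$) and once for $K$ (to reach the contradiction), and the configuration is not symmetric in the two extensions, so $S_2$ never enters and the hypotheses on $K$ cannot be traded for a hypothesis on $S_2$. Implicit throughout is that $N$ is nontrivial — the natural assumption in the fibering setting, where $N$ is the fiber group — since with $N = \{1\}$ the first extension collapses to $G \cong S_2$ and the statement degenerates.
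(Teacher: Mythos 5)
Your proof is correct and uses the same two ingredients as the paper's, merely in the opposite order: the paper first invokes the normal insolubility of $K$ to conclude $N \cap K = \{1\}$ and then derives the contradiction from $M \simeq N$ being a nontrivial solvable normal subgroup of $S_1$, whereas you first kill $M$ using $S_1$ and then contradict the normal insolubility of $K$ via $N \trianglelefteq K$. Both are the same two-step diagram chase in the nine-lemma diagram, and your explicit flagging of the implicit nontriviality assumption on $N$ is a point the paper's version also relies on without stating.
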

\begin{proof} We again use the diagram in the statement of Lemma \ref{ninelemma}.
Since $N \cap K$ is a solvable normal subgroup of $K,$ it must be that $N \cap K = \{1\}.$ It follows that $M \simeq N,$ which contradicts the normal insolubility of $S_1.$
\end{proof}
Lemmas \ref{solvablethm}, \ref{solvablethm2} are not very profound, but have very nice corollaries (thanks to very deep work of a number of authors).
\begin{theorem}
\label{trivnonsol}
A nonabelian simple group is normally insoluble.
\end{theorem}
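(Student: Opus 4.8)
The plan is to argue by contradiction, leaning entirely on the definition of simplicity. Suppose $G$ is a nonabelian simple group that is \emph{not} normally insoluble, so that $G$ contains a nontrivial solvable normal subgroup $N$. The first step is to invoke simplicity directly: the only normal subgroups of $G$ are $\{1\}$ and $G$ itself, and since $N \neq \{1\}$ by assumption, we are forced to conclude $N = G$. In other words, $G$ itself is solvable.

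The second step converts solvability of $G$ into abelianness, again using simplicity. Since $G$ is a nontrivial solvable group, its derived series terminates at $\{1\}$ after finitely many steps; in particular the commutator subgroup $[G,G]$ is a \emph{proper} subgroup of $G$. But $[G,G]$ is characteristic, hence normal, in $G$, so simplicity forces $[G,G] = \{1\}$. This says precisely that $G$ is abelian, contradicting the standing hypothesis that $G$ is nonabelian. The contradiction shows that no such $N$ can exist, so $G$ is normally insoluble.

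There is essentially no obstacle here: the entire content is the elementary fact that a simple solvable group must be abelian (indeed cyclic of prime order). One could alternatively phrase the argument by citing this classification of simple solvable groups outright, rather than unwinding the commutator subgroup by hand, but the two-line derived-subgroup computation keeps the proof self-contained and avoids appealing to anything beyond the definitions of \emph{simple} and \emph{solvable}.
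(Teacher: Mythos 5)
Your argument is correct and is exactly the intended one: the paper's proof consists solely of the word ``Immediate,'' and your two-step unwinding (simplicity forces $N=G$, then $[G,G]$ proper and normal forces $G$ abelian) is the standard way to make that immediacy explicit. No differences to report.
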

\begin{proof}
Immediate.
\end{proof}
\begin{theorem}
\label{freeprod}
A nontrivial free product $G=G_1 * G_2$ is normally insoluble, as long as $G_1$ and $G_2$ are not both cyclic groups of order $2.$
\end{theorem}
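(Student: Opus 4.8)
The plan is to show that \emph{any} nontrivial normal subgroup $H \trianglelefteq G = G_1 * G_2$ fails to be solvable, by first constraining the isomorphism type of a solvable $H$ and then using normality to eliminate the few surviving possibilities. The key structural input is the Kurosh subgroup theorem: every subgroup $H$ of a free product is itself a free product
\[
H = F * \left( \ast_{\alpha} \bigl( H \cap g_\alpha G_{i_\alpha} g_\alpha^{-1} \bigr) \right),
\]
with $F$ free and each remaining factor the intersection of $H$ with a conjugate of $G_1$ or $G_2$. Regarding $F$ as a free product of infinite cyclic groups, $H$ is a free product of a family of nontrivial groups, and such a free product is solvable only in three situations: it is trivial, it has a single nontrivial factor, or it is a free product of exactly two groups of order two (so $H \cong C_2 * C_2 \cong D_\infty$). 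Indeed, a free product of two nontrivial groups contains a nonabelian free subgroup — hence is non-solvable — unless both factors have order two, and a free product of three or more nontrivial factors always contains $F_2$ (e.g. by a negative Euler-characteristic/ping-pong argument). Thus if $H$ is solvable and nontrivial, then either (i) $H$ lies in a conjugate of a factor, or (ii) $H$ is infinite cyclic, or (iii) $H \cong D_\infty$.

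I would then dispose of each case using normality and the normal-form calculus in free products. For (i): since $H$ is normal, $H = g^{-1} H g$ lies in $G_i$ itself; picking $1 \neq h \in H \subseteq G_i$ and $1 \neq x \in G_j$ with $j \neq i$, the conjugate $xhx^{-1} \in H$ is a reduced word of syllable length $3$ and so cannot lie in $G_i$, forcing $H = \{1\}$. For (ii), write $H = \langle h \rangle$; normality yields a homomorphism $G \to \Aut(\mathbb{Z}) \cong C_2$, so the centralizer $C_G(h)$ has index at most $2$. But in a free product the centralizer of an infinite-order element is infinite cyclic when $h$ is cyclically reduced of length $\geq 2$, and is contained in a conjugate of a factor when $h$ itself is conjugate into a factor; in the first case $G$ would be virtually cyclic, and in the second $C_G(h)$ would have infinite index (a proper free factor $G_i$ has infinite index in $G$) — either alternative is a contradiction. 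Case (iii) reduces to (ii): the infinite cyclic subgroup of index two in $D_\infty$ is characteristic in $H$ and therefore normal in $G$, returning us to the impossible situation of a normal copy of $\mathbb{Z}$.

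The hypothesis that $G_1$ and $G_2$ are not both cyclic of order two enters exactly where I assert that $G$ is not virtually cyclic (equivalently, contains a nonabelian free subgroup): among nontrivial free products this fails only for $C_2 * C_2 = D_\infty$, which is itself solvable and hence \emph{not} normally insoluble, so it must be excluded. I expect the main obstacle to be the two ``virtually cyclic'' cases (ii) and (iii): making them rigorous requires the precise description of centralizers of infinite-order elements in a free product, together with the observation that a proper free factor has infinite index. Case (i) and the classification of solvable free products are comparatively routine.
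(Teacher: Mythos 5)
Your proof is correct, and it follows the same overall strategy as the paper's: both arguments take the Kurosh subgroup theorem as the structural input, observe that a solvable subgroup $H$ can have essentially no free-product structure, and then use normality to kill the remaining possibilities. The differences are in the case analysis and in how the cyclic case is dispatched. The paper's proof is a two-line sketch that asserts a solvable subgroup ``cannot be a nontrivial free product'' and must therefore be $\langle x\rangle$ for a hyperbolic $x$, which it then excludes because conjugation moves the axis of $x$; this elides exactly the two cases you treat carefully, namely that $C_2 * C_2 \cong D_\infty$ \emph{is} a solvable nontrivial free product (and can occur as a subgroup even when $G$ itself is not $C_2*C_2$), and that $H$ might sit inside a conjugate of a factor rather than being generated by a hyperbolic element. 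Your normal-form argument for the factor case and your reduction of the $D_\infty$ case to the infinite cyclic case via the characteristic index-two subgroup are exactly the right patches. For the infinite cyclic case you replace the paper's axis argument with the index-$\le 2$ centralizer bound coming from $G \to \Aut(\mathbb{Z}) \cong C_2$ plus the classification of centralizers in free products; this is where the hypothesis excluding $C_2 * C_2$ genuinely enters, and you locate it correctly. In short, your write-up is a rigorous version of the paper's sketch rather than a different proof, and it is the more complete of the two.
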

\begin{proof}
We use the standard Bass-Serre machinery of groups acting on trees -- see \cite{serretrees}. By the Kurosh subgroup theorem, a subgroup $H$ of $G$ has the form:
\[
H =\left( *_i g_i^{-1} G_1 g_i\right) * \left(*_j g_j^{-1} G_2 g_j\right) * F(X),
\]
where $F(X)$ is a free subgroup generated by a subgroup $X = \{x_\alpha\} \subset G,$ where each $x_\alpha$ acts hyperbolically.

If $H$ is a solvable subgroup it cannot be a nontrivial free product. Since it is normal, it must have the form $<x>,$ for some hyperbolic element $x.$ But since conjugating $x$ changes its axis, $\nch{x}$ cannot be a cyclic subgroup.
\end{proof}
\begin{remark}
\label{genfreeprod}
Analogous theorems can be proved (by the same method) for amalgamated free products and general graphs of groups.
\end{remark}
\begin{theorem}
\label{tfreehyp}
Torsion-free nonelementary hyperbolic groups are normally insoluble.
\end{theorem}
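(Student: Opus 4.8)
The plan is to argue by contradiction, exploiting the fact that a nontrivial solvable normal subgroup would force $G$ to preserve a pair of points at infinity, and hence to be elementary. So suppose $A$ is a nontrivial solvable normal subgroup of the torsion-free nonelementary hyperbolic group $G$. First I would reduce to the abelian case: the last nontrivial term $B$ of the derived series of $A$ is a nontrivial \emph{abelian} subgroup which is characteristic in $A$ and therefore normal in $G$. The key structural input now is the standard fact (due to Gromov) that in a hyperbolic group the centralizer of an infinite-order element is virtually cyclic; since $G$ is torsion-free, a virtually cyclic subgroup is either trivial or infinite cyclic. Choosing any $b \in B$ with $b \neq 1$, which has infinite order because $G$ is torsion-free, we get $B \subseteq C_G(b) \cong \integers$, so $B = \langle b\rangle \cong \integers$ is an infinite cyclic normal subgroup of $G$.

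Next I would pass to the boundary $\partial G$. As an infinite-order element, $b$ is loxodromic and has exactly two fixed points $b^+, b^- \in \partial G$. Normality of $B$ gives, for every $g \in G$, that $gbg^{-1} \in B$; since conjugation by $g$ restricts to an automorphism of $B \cong \integers$, in fact $gbg^{-1} = b^{\pm 1}$. Because $g$ carries the fixed-point set of $b$ onto the fixed-point set of $gbg^{-1} = b^{\pm 1}$, which is again $\{b^+, b^-\}$, every $g \in G$ stabilizes the pair $\{b^+, b^-\}$ setwise. Thus $G$ is contained in the stabilizer of a pair of boundary points, which is a virtually cyclic (elementary) subgroup of $G$; hence $G$ itself is virtually cyclic, contradicting the hypothesis that $G$ is nonelementary.

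The main obstacle is the structural input in the first step — that an abelian (equivalently, solvable) subgroup of a hyperbolic group is virtually cyclic, equivalently that centralizers of infinite-order elements are virtually cyclic. This is the one genuinely deep ingredient; once it is in hand, the remainder is routine hyperbolic boundary dynamics, namely identifying the two fixed points of a loxodromic element and tracking how normality forces $G$ into the elementary closure of $b$. One should also take some care with the reduction to the abelian case and with the role of the torsion-free hypothesis, which is exactly what upgrades ``virtually cyclic'' to ``infinite cyclic'' and guarantees that the chosen element $b$ has infinite order, so that $b^+$ and $b^-$ are genuinely defined.
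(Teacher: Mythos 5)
Your proof is correct, but it routes around the paper's argument in a couple of places. The paper's proof has the same two-step skeleton — (i) a solvable normal subgroup of a torsion-free hyperbolic group must be infinite cyclic, (ii) a normal infinite cyclic subgroup forces $G$ to be elementary — but it implements both steps by citation: for (i) it invokes the trichotomy that a subgroup of a torsion-free word-hyperbolic group is trivial, virtually cyclic, or contains $F_2$ (so a solvable subgroup, containing no $F_2$, is cyclic), and for (ii) it quotes the fact that the normalizer of a cyclic subgroup is a finite extension of that subgroup, which cannot be all of a nonelementary $G$. You instead get (i) from the derived-series reduction to an abelian characteristic subgroup plus the fact that centralizers of infinite-order elements are virtually cyclic, and you prove (ii) by hand via boundary dynamics: $gbg^{-1}=b^{\pm 1}$ forces $G$ to stabilize the pair $\{b^+,b^-\}$, and such a stabilizer is elementary. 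Your version leans on a single black box (virtually cyclic centralizers) and makes the normalizer step transparent, at the cost of a slightly longer argument; the paper's version is shorter but cites two structural facts. One cosmetic point: after concluding $B\subseteq C_G(b)\cong\integers$ you should say $B$ is infinite cyclic and replace $b$ by a generator of $B$, since your originally chosen $b$ need not generate $B$; nothing downstream is affected.
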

\begin{proof}
It was originally noted by M.~Gromov in \cite{gromovgroups} (and proved carefully by a number of authors, see, eg, \cite{ghysdelaharpe,cdp}) that:

A subgroup of a torsion-free word-hyperbolic group is either trivial, or virtually cyclic, or contains a free subgroup on two generators.

In a torsion free word hyperbolic group $G$ , the normalizer in $G$ of any cyclic subgroup $Z$ is a finite extension of $Z.$

We are now done: a solvable subgroup of a torsion free  word-hyperbolic group $G$ must be cyclic, so its normalizer is a finite extension of $Z,$ so not all of $G,$ under our hypothesis that $G$ is nonelementary.
\end{proof}

\begin{theorem}
\label{isomh}
Nonelementary groups of isometries of Hadamard manifolds are normally insoluble.
\end{theorem}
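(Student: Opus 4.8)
The plan is to show that a nontrivial solvable normal subgroup forces $G$ to be elementary, contrapositively to the claim. First I would reduce to the abelian case. If $N$ is a nontrivial solvable normal subgroup of $G$, let $A=N^{(m)}$ be the last nontrivial term of its derived series. Then $A$ is abelian, and since each derived subgroup is characteristic in $N$ and a characteristic subgroup of a normal subgroup is again normal, $A$ is a nontrivial \emph{abelian} normal subgroup of $G$. It therefore suffices to rule out such an $A$ under the hypothesis that $G$ is nonelementary, which I take to mean that $G$ fixes no point of the boundary sphere $\partial_\infty X$ and preserves no flat (equivalently, its limit set $\Lambda_G$ is infinite and $G$ acts minimally on it).

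The key mechanism is that every canonical object attached to a \emph{normal} subgroup is automatically $G$-invariant. Concretely, the limit set satisfies $g\Lambda_A=\Lambda_{gAg^{-1}}=\Lambda_A$ for all $g\in G$, and likewise for the fixed set at infinity $\mathrm{Fix}_\infty(A)$ and for the fixed set $\mathrm{Fix}(A)\subseteq X$. I would then dichotomize on whether $A$ contains an axial (hyperbolic) isometry. If some $a\in A$ is axial, with axis endpoints $\{a^+,a^-\}$, then because $A$ is abelian every element of $A$ preserves $\mathrm{Min}(a)$ and hence the pair $\{a^+,a^-\}$, so $\{a^+,a^-\}\subseteq\Lambda_A$; feeding this through normality and minimality of the $G$-action on $\Lambda_G$ forces $\Lambda_A=\Lambda_G$, an infinite set. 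On the other hand $A$ is abelian, hence amenable, so by the Adams--Ballmann theorem it either fixes a point of $\partial_\infty X$ or preserves a flat $F$, and in the rank-one/negatively curved setting this pins $\Lambda_A$ down to a pair of points, the contradiction. If instead $A$ contains no axial element, then either every $A$-orbit is bounded, in which case the circumcenter construction (exactly as in Lemma \ref{bieri3}) produces a common fixed point $p\in X$ and $\mathrm{Fix}(A)$ is a proper closed convex $G$-invariant subset of $X$, or $A$ consists of parabolics fixing a common boundary point and $\mathrm{Fix}_\infty(A)$ is a nonempty $G$-invariant subset of $\partial_\infty X$; either way $G$ preserves a proper canonical object and one argues, via nearest-point projection or a Tits-boundary circumcenter, that $G$ must fix a point of $\overline{X}$, contradicting nonelementarity and paralleling the normalizer argument of Theorem \ref{tfreehyp}.

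The main obstacle is precisely the transfer step in the non-axial case and, underlying it, the behaviour of flats and parabolics in general nonpositive (as opposed to strictly negative) curvature. Upgrading "$G$ preserves the elementary structure attached to $A$" to "$G$ is itself elementary" is where the work lies: one needs the minimality of the $G$-action on $\Lambda_G$, the fact that a nonelementary group admits no invariant proper closed convex subset of $X$, and a circumcenter (or barycenter) argument on a $G$-invariant subset of the Tits boundary of controlled radius. These are exactly the points where the Adams--Ballmann dichotomy and the structure theory of semisimple isometries of Hadamard manifolds must be invoked with care; in the pinched negative curvature case they collapse to the clean statement that every amenable subgroup is virtually cyclic or fixes a boundary point, which makes the argument essentially identical to that of Theorem \ref{tfreehyp}.
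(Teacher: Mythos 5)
Your route is genuinely different from the paper's. The paper's proof is very short: it first invokes Lemma \ref{bieri3} to exclude finite normal subgroups, then asserts that $G$ is word-hyperbolic and applies the subgroup trichotomy (finite / virtually cyclic / contains $F_2$) together with the fact that the normalizer of a virtually cyclic subgroup of a word-hyperbolic group is virtually cyclic -- i.e., it is essentially the same argument as Theorem \ref{tfreehyp} plus the removal of torsion issues. You instead reduce to an abelian normal subgroup via the derived series (a clean and worthwhile simplification the paper does not make), and then run a boundary-dynamics argument: $G$-invariance of $\Lambda_A$, $\mathrm{Fix}_\infty(A)$ and $\mathrm{Fix}(A)$, the axial/non-axial dichotomy, Adams--Ballmann for the amenable group $A$, and circumcenter arguments. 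In the strictly negatively curved (hence word-hyperbolic) setting your argument closes and, as you yourself observe, collapses to the paper's.

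However, as written for general Hadamard manifolds your proposal is not a proof: you explicitly leave open the two steps that carry all the weight -- ruling out the parabolic/non-axial case, and upgrading ``$G$ preserves a canonical elementary object attached to $A$'' to ``$G$ is elementary.'' That is a genuine gap, not a routine verification: in the presence of flats the conclusion is sensitive to the definition of ``nonelementary'' (consider $F_2 \times \mathbb{Z}$ acting on $\mathbb{H}^2 \times \mathbb{R}$, which contains a free subgroup yet has an infinite cyclic normal subgroup), so some hypothesis forcing rank-one behaviour must actually be used, not merely gestured at. To your credit, you flag exactly this difficulty; the paper's own proof buries it inside the unjustified assertion ``Now, $G$ is word-hyperbolic,'' which is false for general isometry groups of Hadamard manifolds and is really where the hypothesis of negative curvature (or rank one) is being consumed. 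So: different and in some ways more honest approach, but you must either restrict to the negatively curved case (where both arguments work) or supply the Adams--Ballmann/flat-torus-theorem analysis you defer.
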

\begin{proof}
By Lemma \ref{bieri3}, such a group $G$ has no finite normal subgroup. Now, $G$ is word-hyperbolic.  I is known (see, again \cite{gromovgroups,ghysdelaharpe,cdp}) that a subgroup $H$ of a word-hyperbolic $G$ is either
\begin{enumerate}
\item finite
\item virtually infinite cyclic
\item contains a free subgroup of rank 2.
\end{enumerate}
Further, if $Z\in G$ is virtually infinitely cyclic, then its normalizer in $G$ is a finite extension of $Z.$ Since our group was assumed nonelementary, we are done.
\end{proof}
\begin{corollary}
\label{seifert}
A Seifert fibration of a three-manifold whose base is of negative Euler characteristic is uniquely determined by its fundamental group.
\end{corollary}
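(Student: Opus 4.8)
The plan is to translate the geometric statement into the group-theoretic framework of Lemma \ref{solvablethm} and then invoke it directly. First I would recall that a Seifert fibration of a closed $3$-manifold $M$ over a base orbifold $O$ produces a short exact sequence
\[
1 \to \langle f\rangle \to \pi_1(M) \to \pi_1^{\mathrm{orb}}(O) \to 1,
\]
in which $\langle f\rangle$ is the normal cyclic subgroup generated by the class of the regular fiber and $\pi_1^{\mathrm{orb}}(O)$ is the orbifold fundamental group of the base. When $O$ has negative Euler characteristic, $O$ is a closed hyperbolic $2$-orbifold, so $M$ is aspherical, $\pi_1(M)$ is torsion free, and the fiber subgroup $\langle f\rangle$ is infinite cyclic, hence solvable; meanwhile $\pi_1^{\mathrm{orb}}(O)$ is a cocompact Fuchsian group, that is, a nonelementary discrete group of isometries of the Hadamard manifold $\mathbb{H}^2$.

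The crucial point is then that the base group is normally insoluble, which is exactly the content of Theorem \ref{isomh}. Now suppose $M$ carried two Seifert fibrations, each with base of negative Euler characteristic. Writing the two associated sequences as
\[
1 \to N \to \pi_1(M) \to S_2 \to 1, \qquad 1 \to K \to \pi_1(M) \to S_1 \to 1,
\]
we have $N, K$ infinite cyclic (so solvable) and $S_1, S_2$ normally insoluble. These are precisely the hypotheses of Lemma \ref{solvablethm}, which yields $N = K$ as subgroups of $\pi_1(M)$. Thus the fiber subgroup is an invariant of $\pi_1(M)$: any two hyperbolic-base Seifert structures on $M$ share literally the same fiber subgroup, and hence the same base group $\pi_1(M)/N = \pi_1(M)/K$.

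It remains to pass from equality of the fiber subgroups to uniqueness of the fibration itself. Here I would appeal to the standard fact that for an aspherical Seifert fibered space the Seifert structure is recovered, up to fiber-preserving homeomorphism, from the fiber subgroup of $\pi_1$: the fibers are exactly the orbits whose $\pi_1$-image lies in the distinguished normal $\mathbb{Z}$, and the base orbifold is reconstructed as the corresponding quotient. I expect this last step to be the main (though entirely standard) obstacle, since it is the only place where genuine $3$-manifold topology, rather than pure group theory, enters; the cleanest route is to cite the uniqueness-of-Seifert-structure results for the $\mathbb{H}^2\times\mathbb{R}$ and $\widetilde{\mathrm{SL}_2}$ geometries, which are the only Seifert geometries admitting a base of negative Euler characteristic. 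Everything preceding that step is supplied verbatim by Lemma \ref{solvablethm} together with Theorem \ref{isomh}.
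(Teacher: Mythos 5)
Your proposal is correct and takes essentially the same route as the paper: the paper's own proof is the one-line observation that the fiber is $S^1$ with fundamental group $\mathbb{Z}$ (hence solvable) and the base is a hyperbolic orbifold, implicitly invoking Lemma \ref{solvablethm} together with the normal insolubility of the base group. Your version simply spells out the same argument in more detail, including the final passage from equality of fiber subgroups back to uniqueness of the Seifert structure, which the paper leaves implicit.
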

\begin{proof}
Such a Seifert fiber space can be viewed as a fibration whose fiber is $S^1,$ whose fundamental group is $\mathbb{Z},$ so solvable, and the base is a hyperbolic orbifold.
\end{proof}
\begin{corollary}
\label{ellipticuniq}
Suppose a $4$-manifold $M^4$ Seifert-fibers over a hyperbolic orbifold with elliptic (torus) fiber. Then, this fibering is unique.
\end{corollary}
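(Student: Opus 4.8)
The plan is to reduce the topological statement to the purely group-theoretic rigidity of Lemma \ref{solvablethm}. A torus-Seifert-fibering of $M^4$ over a hyperbolic orbifold $B$ yields, on fundamental groups, a short exact sequence
\[
1 \to N \to \pi_1(M^4) \to \pi_1^{\mathrm{orb}}(B) \to 1,
\]
where $N$ is the image in $\pi_1(M^4)$ of the fundamental group of a regular fiber (together with the identifications coming from the exceptional fibers). Since the fiber is a torus, $N$ is (virtually) $\integers^2$, and in particular $N$ is abelian, hence solvable. If $M^4$ admitted two such fiberings, over hyperbolic orbifolds $B_1$ and $B_2$, I would record them as a pair of extensions with $G = \pi_1(M^4)$, solvable fiber subgroups $N$ and $K$, and base quotients $S_2 = \pi_1^{\mathrm{orb}}(B_2)$ and $S_1 = \pi_1^{\mathrm{orb}}(B_1)$, exactly in the shape demanded by Lemma \ref{solvablethm}.

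Next I would check that the base groups $S_1, S_2$ are normally insoluble. A hyperbolic $2$-orbifold has negative orbifold Euler characteristic, so its orbifold fundamental group is a nonelementary discrete group of isometries of the hyperbolic plane $\mathbb{H}^2$, which is a Hadamard manifold. This is precisely the hypothesis of Theorem \ref{isomh}, which gives that $S_1$ and $S_2$ are normally insoluble; note that Theorem \ref{isomh} (via Lemma \ref{bieri3}) applies directly in the presence of torsion, so no passage to a torsion-free finite-index subgroup is needed to accommodate the cone points of $B$.

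With $N, K$ solvable and $S_1, S_2$ normally insoluble, Lemma \ref{solvablethm} applies verbatim and yields $N = K$ as subgroups of $G$. Consequently the two base groups coincide, $S_1 = G/N = G/K = S_2$, and the two fiberings have the same fiber subgroup and the same base.

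The step I expect to be the real content is the passage from the group-theoretic equality $N=K$ to uniqueness of the fibering as a geometric decomposition of $M^4$. Here I would invoke asphericity: the fiber $T^2$ is a $K(\integers^2,1)$ and the hyperbolic orbifold $B$ is aspherical in the orbifold sense, so the long exact homotopy sequence of the fibration forces $\pi_n(M^4)=0$ for $n\geq 2$, i.e. $M^4$ is aspherical. An aspherical manifold is determined up to homotopy by its fundamental group, and the Seifert-fibering is encoded by the fiber subgroup $N \trianglelefteq \pi_1(M^4)$; once $N$ is pinned down, the base orbifold is recovered as the quotient and the fibering is reconstructed, exactly as in the three-dimensional case of Corollary \ref{seifert}. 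Making precise the sense in which ``the fibering is unique'' -- and verifying that distinct smooth Seifert structures cannot induce the same fiber subgroup -- is the one point that requires care beyond the formal algebra.
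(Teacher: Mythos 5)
Your core argument for the torus-versus-torus case is exactly the paper's: both fiberings give extensions of $\pi_1(M^4)$ with solvable (abelian, $\integers^2$) kernel and normally insoluble quotient, and Lemma \ref{solvablethm} forces the two fiber subgroups to coincide. Your justification that the base groups are normally insoluble via Theorem \ref{isomh} (so that cone points cause no trouble) is a correct and welcome filling-in of a step the paper leaves implicit, and your closing remarks about recovering the geometric fibering from the normal subgroup via asphericity are a reasonable elaboration of what the paper takes for granted.

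However, there is a genuine gap in scope. You only rule out a \emph{second torus-fibering} (``If $M^4$ admitted two such fiberings\dots''), whereas the uniqueness asserted -- and proved in the paper -- is among \emph{all} Seifert fiberings of $M^4$ over a two-dimensional orbifold, whatever the fiber. Two further cases must be excluded. First, a competing fibering with higher-genus (hyperbolic) fiber $K$: here $K$ is normally insoluble rather than solvable, so Lemma \ref{solvablethm} does not apply; instead one invokes Lemma \ref{solvablethm2} with $N = \integers^2$ solvable and $K$, $S_1$ both normally insoluble, which says no such pair of extensions can coexist. Second, a competing fibering with spherical (rational) fiber: then $\pi_1(M^4)$ would be isomorphic to the base orbifold group, a nonelementary Fuchsian group, hence normally insoluble -- contradicting the fact that it contains the nontrivial normal solvable subgroup $\integers^2$ coming from the torus fibering. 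Without these two cases your argument establishes only ``uniqueness among elliptic fiberings,'' which is strictly weaker than the stated corollary.
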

\begin{proof}
The fundamental group of the torus $T^n$  is $\mathbb{Z}^n,$ which is abelian, hence solvable. Lemma \ref{solvablethm2} shows that $M^4$ cannot fiber with a higher-genus fiber, while Lemma \ref{solvablethm} shows that $M^4$ admits a unique fibration with an elliptic fiber. Finally, if $M^4$ fibered with a rational (sphere) fiber, its fundamental group would be the same as that of its base, but we know that a (nonelementary) fuchsian group is normally insoluble.
\end{proof}
\begin{remark}
The hypothesis that the base is a \emph{hyperbolic} orbifold is necessary: there are K3 surfaces which admit multiple fiberings -- their base is a \emph{Euclidean} orbifold. These examples were pointed out to the author by E. Bombieri ( \cite{bombieri})
\end{remark}
\begin{theorem}
\label{margulislat}
Centerless irreducible lattices in connected semi-simple Lie groups of real rank at least two are normally insoluble.
\end{theorem}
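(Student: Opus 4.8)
The plan is to reduce the statement directly to Margulis's Normal Subgroup Theorem, which is precisely the input that makes the real-rank-at-least-two hypothesis do its work. Recall that this theorem asserts: if $G$ is a connected semisimple Lie group of real rank at least two and $\Gamma < G$ is an irreducible lattice, then every normal subgroup $N \trianglelefteq \Gamma$ is either central in $G$ (hence finite) or of finite index in $\Gamma$. So I would let $N$ be a nontrivial solvable normal subgroup of the centerless lattice $\Gamma$, apply the Normal Subgroup Theorem to $N$, and dispose of the two resulting alternatives.

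First, suppose $N$ is central, so that $N \subseteq Z(G) \cap \Gamma$. Since every element of $Z(G)$ commutes with all of $G$, in particular with all of $\Gamma$, we have $Z(G) \cap \Gamma \subseteq Z(\Gamma)$, whence $N \subseteq Z(\Gamma)$. But $\Gamma$ is centerless by hypothesis, so $Z(\Gamma) = \{1\}$, forcing $N = \{1\}$ and contradicting nontriviality. Second, suppose instead that $N$ is of finite index in $\Gamma$. Then $\Gamma$ is virtually solvable, hence amenable. However, a lattice in $G$ is amenable if and only if $G$ is amenable, and a connected semisimple Lie group of real rank at least two is never amenable, since it has a noncompact (higher-rank) semisimple factor. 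This contradiction finishes the argument. As an alternative to the amenability step, one could invoke the Borel Density Theorem: $\Gamma$, and therefore its finite-index subgroup $N$, is Zariski dense in $G$, so solvability of $N$ would force the Zariski closure $G$ to be solvable, which it is not.

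The genuine content is carried entirely by the Normal Subgroup Theorem; the remainder is formal. The main obstacle is thus not in the deduction but in the depth of that input, and it is worth flagging why the hypotheses are exactly right. The central-versus-finite-index dichotomy fails badly in rank one, where lattices may be free groups or surface groups and so possess rich lattices of infinite normal subgroups (indeed the earlier Theorems \ref{tfreehyp} and \ref{isomh} had to work directly precisely because no such dichotomy is available there); it is the combination of real rank at least two with irreducibility that excludes all intermediate normal subgroups. Once the dichotomy is granted, the \emph{centerless} hypothesis eliminates the central alternative and the non-amenability of higher-rank $G$ eliminates the finite-index alternative, so that no nontrivial solvable normal subgroup can survive.
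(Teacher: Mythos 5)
Your proof is correct and follows essentially the same route as the paper: both arguments reduce immediately to Margulis's Normal Subgroup Theorem, use the centerless hypothesis to kill the central alternative, and then rule out the finite-index alternative by showing a higher-rank lattice cannot be virtually solvable. The only (cosmetic) difference is in that last step, where the paper quotes the Tits alternative while you use non-amenability or Borel density; all three close the argument equally well.
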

\begin{proof} Margulis' normal subgroup theorem (\cite{margulisnorm}) states that every normal subgroup in such a lattice is of finite index, hence itself a lattice, hence (by the Tits alternative) not solvable. Recall that the Tits alternative (J. Tits \cite{titsalt}) is that a finitely generated  matrix group is either solvable-by-finite or contains a nonabelian free subgroup).
\end{proof}

\begin{theorem}
\label{longmcg}
Mapping class groups of closed surfaces of genus at least three are normally insoluble.
\end{theorem}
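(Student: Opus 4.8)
The plan is to prove directly that $\mcg(S_g)$ has no nontrivial solvable normal subgroup for $g\ge 3$, following the same philosophy as Theorems \ref{tfreehyp} and \ref{isomh}: a solvable normal subgroup is forced to be ``small'', while a nontrivial normal subgroup of the mapping class group is forced to be ``large''. So suppose $N\trianglelefteq \mcg(S_g)$ is nontrivial and solvable, and separate the cases $N$ finite and $N$ infinite. The hypothesis $g\ge 3$ will enter only through the finite case, and it is genuinely necessary: for $g=1,2$ the hyperelliptic involution generates a central subgroup $\cong\mathbb{Z}/2$, which is a nontrivial abelian, hence solvable, normal subgroup, so the statement fails in those genera.

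First I would dispose of the finite case. If $N$ is finite, then conjugation gives a homomorphism $\mcg(S_g)\to\Aut(N)$ with finite image, whose kernel $C$ is a finite-index subgroup centralizing $N$ elementwise. Fix a nontrivial $n\in N$ and set $C'=\langle C,n\rangle$. Since $n$ commutes with every element of $C$ and has finite order, $n$ is central in $C'$ and $[C':C]\le\operatorname{ord}(n)<\infty$, so $C'$ is a finite-index subgroup of $\mcg(S_g)$ with $1\ne n\in Z(C')$. This contradicts the fact (due to Paris, and recorded in Farb--Margalit) that every finite-index subgroup of $\mcg(S_g)$ is centerless for $g\ge 3$. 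Hence $N$ must be infinite.

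For the infinite case I would first observe that $N$ contains a pseudo-Anosov. Indeed, if $N$ contained no pseudo-Anosov its canonical reduction system would be a nonempty finite set of isotopy classes of simple closed curves, and by normality this set is preserved by all of $\mcg(S_g)$; but $\mcg(S_g)$ has no finite orbit of curve classes, a contradiction. This is Ivanov's theorem that every infinite normal subgroup of the mapping class group contains a pseudo-Anosov element $f$. I would then run a ping-pong argument exactly analogous to the free-product argument of Theorem \ref{freeprod} and to Lemma \ref{normalclosure}: the element $f$ has attracting and repelling fixed foliations $\mathcal{F}^{\pm}\in\mathcal{PML}(S_g)$, and because the stabilizer in $\mcg(S_g)$ of the pair $\{\mathcal{F}^{+},\mathcal{F}^{-}\}$ is only virtually cyclic while $\mcg(S_g)$ is not, there is $h\in\mcg(S_g)$ carrying $\{\mathcal{F}^+,\mathcal{F}^-\}$ to a disjoint pair. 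By normality $hfh^{-1}\in N$, and for large $m$ the north--south dynamics of $f^m$ and $hf^mh^{-1}$ on $\mathcal{PML}(S_g)$ yield $\langle f^m,\,hf^mh^{-1}\rangle\cong F_2\le N$. A nonabelian free group cannot lie in a solvable group, so this case is impossible as well, and no nontrivial solvable $N$ exists.

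The main obstacle, and the reason this theorem cannot be folded into any earlier result, is that $\mcg(S_g)$ is neither word-hyperbolic (it contains $\mathbb{Z}^2$, e.g.\ a pair of commuting Dehn twists about disjoint curves), nor a lattice in a higher-rank Lie group, nor torsion free, so none of Theorems \ref{tfreehyp}, \ref{isomh}, \ref{margulislat} applies. One must instead supply the two ``large/small'' inputs by hand from Nielsen--Thurston theory: Ivanov's classification of normal subgroups, to force a pseudo-Anosov element, and the dynamics on $\mathcal{PML}(S_g)$, to run the ping-pong that replaces Lemma \ref{normalclosure}. The genuinely delicate point is the finite case, which is precisely where genus at least three becomes indispensable.
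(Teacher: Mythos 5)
Your proof is correct and follows essentially the same route as the paper: both reduce to showing that a nontrivial normal subgroup of $\mcg(S_g)$ contains pseudo-Anosov elements (you via Ivanov's theorem plus an explicit finite/infinite dichotomy using centerlessness of finite-index subgroups, the paper via Long's Lemma 2.6, with the genus hypothesis likewise entering through triviality of the center) and then run ping-pong on $\mathcal{PML}(S_g)$ to produce a nonabelian free subgroup, which cannot lie in a solvable group. The additional detail you supply for the finite case is a welcome elaboration of the same argument rather than a different method.
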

\begin{proof}
D. Long shows in \cite{longnorm}[Lemma 2.6] that a normal subgroup $N$ of the mapping class group of a closed surface of genus at least three (the genus hypothesis comes in to show that the mapping class group in question has trivial center) has at least two non-commuting pseudo-anosov mapping classes. Once we know that, a standard ping-pong argument (see, e.g., \cite{mcgpingpong}) shows that $N$ contains a free group on two generators, and so is not solvable.
\end{proof}

\begin{theorem}
\label{mosherout}
Outer automorphism groups of nonabelian free groups of rank greater than two are normally insoluble.
\end{theorem}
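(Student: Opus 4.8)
The plan is to mirror the proof of Theorem \ref{longmcg}, replacing pseudo-Anosov mapping classes by \emph{fully irreducible} (iwip) outer automorphisms and Long's Lemma by its analogue for $\Out(F_n)$. Recall that $\phi \in \Out(F_n)$ is fully irreducible if no positive power preserves the conjugacy class of a proper free factor; such elements are the dynamical analogues of pseudo-Anosovs. By the train-track theory of Bestvina--Handel they exist in abundance, and by Levitt--Lustig they act with \emph{north--south dynamics} on the boundary of Culler--Vogtmann Outer space (equivalently on the space of projectivized geodesic currents): each iwip $\phi$ has an attracting and a repelling fixed point, and the subgroup of $\Out(F_n)$ fixing this pair $\{[\phi^{+}],[\phi^{-}]\}$ is virtually cyclic, containing $\langle \phi\rangle$ with finite index.

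The one genuinely deep ingredient---the exact counterpart of the Long Lemma invoked in Theorem \ref{longmcg}---is that for $n \geq 3$ every nontrivial normal subgroup $N \trianglelefteq \Out(F_n)$ contains an iwip. As in the mapping class group case, the hypothesis $n \geq 3$ (rank greater than two) enters precisely to guarantee that $\Out(F_n)$ has trivial center, which is what excludes the low-rank pathology: indeed $\Out(F_2) \cong \GL_2(\integers)$ has the nontrivial central (hence solvable normal) subgroup $\{\pm I\}$, so the conclusion genuinely fails at rank two. Granting this input, I first fix an iwip $\phi \in N$.

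Next I produce a \emph{second} iwip in $N$ whose fixed pair is disjoint from that of $\phi$. Since the stabilizer of $\{[\phi^{+}],[\phi^{-}]\}$ is virtually cyclic while $\Out(F_n)$ is not (for $n \geq 2$), there is an element $g \in \Out(F_n)$ not fixing this pair; then $\psi := g\phi g^{-1}$ lies in $N$ by normality, is again an iwip, and has fixed pair $g\cdot\{[\phi^{+}],[\phi^{-}]\}$ disjoint from $\{[\phi^{+}],[\phi^{-}]\}$. With two iwips whose four fixed points are distinct, the standard Tits ping-pong argument on the space of currents (using north--south dynamics exactly as the pseudo-Anosov ping-pong of \cite{mcgpingpong} in Theorem \ref{longmcg}) shows that for large $M$ the subgroup $\langle \phi^{M}, \psi^{M}\rangle$ is free of rank two. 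Hence $N$ contains a nonabelian free group and is not solvable, so $\Out(F_n)$ is normally insoluble.

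The main obstacle is the middle step: that a nontrivial normal subgroup must already contain a fully irreducible element. The ping-pong and the virtual cyclicity of iwip stabilizers are by now routine, but extracting an iwip from an arbitrary normal subgroup is the hard structural fact, and is where one must invoke the deeper theory of $\Out(F_n)$ (the lamination and subgroup-decomposition machinery of Bestvina--Handel and Handel--Mosher). It is the exact analogue of, and no easier than, Long's result that a nontrivial normal subgroup of the mapping class group contains pseudo-Anosov elements.
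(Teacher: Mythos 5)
Your endgame (extract an iwip from the normal subgroup, conjugate it to get a second iwip with a disjoint fixed pair, and run north--south ping-pong to produce a free subgroup of rank two) is exactly the endgame of the paper's proof, and your diagnosis of why rank two must be excluded (nontrivial center of $\Out(F_2)$) matches the paper's remark. But your proof has a genuine hole precisely where you flag it: the claim that for $n\geq 3$ \emph{every} nontrivial normal subgroup of $\Out(F_n)$ contains a fully irreducible element. You grant this as an input, gesture at Bestvina--Handel and Handel--Mosher ``machinery,'' and correctly observe that it is the analogue of Long's lemma --- but you neither prove it nor cite a precise statement, and it is a considerably deeper structural fact than the theorem you are asked to prove. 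Invoking it makes your argument circularly dependent on a black box that is harder than the goal.

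The paper sidesteps this entirely by exploiting the solvability hypothesis: to prove normal insolubility one only needs to understand \emph{solvable} normal subgroups, not arbitrary ones, and solvable subgroups of $\Out(F_n)$ are classified. Concretely, the paper quotes three published results --- Bestvina--Feighn--Handel and Alibegovic (solvable subgroups of $\Out(F_n)$ are finitely generated and virtually abelian), Feighn--Handel (abelian subgroups are virtually cyclic, as the paper states it), and Culler's fixed-point theorem to handle the finite case --- to conclude that a nontrivial solvable normal subgroup would have to be infinite virtually cyclic; only then does it extract an iwip and run the conjugation-plus-ping-pong argument you describe. So where you assume one hard unreferenced lemma about all normal subgroups, the paper substitutes a chain of quotable results about solvable, abelian, and finite subgroups. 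Your approach, if the missing lemma were supplied with a proper reference, would actually prove the stronger statement that every nontrivial normal subgroup of $\Out(F_n)$ contains a nonabelian free group; but as written the central ingredient is asserted rather than established, and that is the gap you would need to close.
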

\begin{proof}

We need three results:

\begin{enumerate}
\item Culler's theorem \cite{cullerfinite}: this states that a finite subgroup of $\Out(F_k)$ stabilizes a point in ``Outer space'' of $F_k$ -- this is the analogous result to the Nielsen realization problem for $\Out(F_k).$
\item The results of M. Bestvina, M. Feighn, and M. Handel (\cite{mbhsolv}) and E. Alibegovic (\cite{alibegsolv}), which state that solvable subgroups of $\Out(F_n)$ are finitely generated virtually abelian.
\item The result of M. Feighn and M. Handel \cite{fh}, which states that an abelian subgroup of $\Out(F_k)$ is virtually cyclic.
\end{enumerate}

From this list, we deduce that a normal solvable subgroup of $\Out(F_k)$ must be an infinite virtually cyclic subgroup, which then contains a fully irreducible element (an iwip). Conjugating this element with another fully irreducible element produces a fully irreducible element with a different axis, and using standard ping-pong arguments we see that the normal closure of our subgroup contains a nonabelian free group, so is not solvable.
\end{proof}

\begin{remark}
The group $\Out(F_2)$ is isomorphic to $\SL(2, \mathbb{Z}),$ and so has nontrivial center.
\end{remark}

\section{Circle bundles over three manifolds}
\label{threem}
A slight variation on the results of Section \ref{solfb} gives us the following result:
\begin{theorem}
\label{nonseifert}
Let \begin{gather}
1 \rightarrow \mathbb{Z} \rightarrow G \rightarrow M \rightarrow 1\\
1 \rightarrow \mathbb{Z} \rightarrow G \rightarrow N \rightarrow 1
\end{gather}
be two extensions, with $M, N$ fundamental groups of  compact, orientable, non-Seifert fibered three-manifolds. Then $M= N.$
\end{theorem}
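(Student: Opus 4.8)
The plan is to run the argument of Lemma \ref{solvablethm} almost verbatim, but to replace the hypothesis of normal insolubility by the weaker --- and here available --- statement that the two base groups contain no nontrivial \emph{cyclic} normal subgroup. This is the ``slight variation'' alluded to above: the kernels are not merely solvable but actually infinite cyclic, so the subgroup produced by the nine-lemma is itself cyclic rather than merely solvable, and ``cyclic'' is all we will need to rule out. Concretely, I would first rename the data to avoid a clash with the notation of Lemma \ref{ninelemma}: write the two extensions as $1 \to A \xrightarrow{i} G \xrightarrow{\psi} P \to 1$ and $1 \to B \xrightarrow{i} G \xrightarrow{\phi} P' \to 1$, with $A \simeq B \simeq \mathbb{Z}$ and $P, P'$ the two three-manifold groups (the $M, N$ of the statement).

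Feeding these into the nine-lemma (Lemma \ref{ninelemma}), with $A$ in the role of the kernel of $\psi$ and $B$ in the role of the kernel of $\phi$, the group occupying the ``$M$''-position of that diagram is $A/(A\cap B) \simeq AB/B$, a normal subgroup of $P'$; being a quotient of $A \simeq \mathbb{Z}$ it is cyclic. Symmetrically $B/(A\cap B)$ is a cyclic normal subgroup of $P$. If I can show that neither $P$ nor $P'$ admits a nontrivial cyclic normal subgroup, then both of these vanish, forcing $A \subseteq B$ and $B \subseteq A$; hence $A = B$ and $M = G/A = G/B = N$, exactly as in the proof of Lemma \ref{solvablethm}.

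It therefore remains to prove the geometric input: the fundamental group of a compact orientable three-manifold that is \emph{not} Seifert fibered has no nontrivial cyclic normal subgroup. This is precisely the converse direction of the Seifert Fibered Space Theorem, and I would argue it by the prime/geometric decomposition. If the manifold is not irreducible then it is either $S^2\times S^1$ (which is Seifert fibered, hence excluded by hypothesis) or a nontrivial connected sum, in which case the Kneser--Milnor decomposition makes $\pi_1$ a nontrivial free product; by Theorem \ref{freeprod} this is normally insoluble and so has no cyclic normal subgroup --- the only exception being $\mathbb{Z}/2 * \mathbb{Z}/2 = \pi_1(\mathbb{RP}^3 \# \mathbb{RP}^3)$, which must be handled separately (or excluded by reading the hypotheses as irreducible). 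If the manifold is irreducible with finite $\pi_1$ it is a spherical space form, hence Seifert fibered, contrary to hypothesis. If it is irreducible with infinite $\pi_1$, the sphere theorem gives $\pi_2 = 0$ and asphericity, so $\pi_1$ is torsion free and any nontrivial cyclic normal subgroup is an \emph{infinite} cyclic normal subgroup; the Seifert Fibered Space Theorem (Waldhausen, Jaco--Shalen and Scott in the Haken case, completed by Mess, Tukia, and Casson--Jungreis/Gabai) then forces the manifold to be Seifert fibered, again a contradiction.

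The main obstacle is this last geometric input, which is a deep theorem to be invoked rather than reproved; the point I would emphasize is \emph{why} it is the correct tool and why solvable-but-non-Seifert manifolds cause no trouble. A Sol-manifold has solvable (polycyclic) fundamental group $\mathbb{Z}^2 \rtimes_A \mathbb{Z}$ and is not Seifert fibered, so it is emphatically \emph{not} normally insoluble; nonetheless it has no nontrivial cyclic normal subgroup, since an Anosov monodromy $A$ fixes no rational line in $\mathbb{Z}^2$ and $I - A^n$ is invertible for $n \neq 0$. This is exactly the gap between Lemma \ref{solvablethm} and the present statement, and it is what makes the weaker ``no cyclic normal subgroup'' formulation both necessary and sufficient. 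The remaining loose ends --- the $\mathbb{RP}^3 \# \mathbb{RP}^3$ exception and the case of nonempty boundary (where one appeals to the version of the Seifert fibered space theorem for Haken manifolds with boundary) --- are routine but should be checked explicitly.
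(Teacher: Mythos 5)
Your proposal is correct and follows essentially the same route as the paper: run the nine-lemma argument of Lemma \ref{solvablethm}, observe that the resulting normal subgroups of the base groups are cyclic, and kill them by invoking the solution to the Seifert Fiber Space Conjecture (Casson--Jungreis, Gabai), which is exactly the citation the paper uses. Your extra care with the reducible, finite-$\pi_1$, and Sol cases fleshes out what the paper leaves implicit in that citation, but it is the same argument.
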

\begin{proof}
By the solution to the Seifert Fiber Space Conjecture (see \cite{cassjung,gabaiseif}) the fundamental groups of the manifolds $M^3, N^3$ have no normal cyclic subgroups, and so the proof of Theorem \ref{uniquefact} immediately implies the result (since the groups $M$ and $R$ in the diagram in the statement of Lemma \ref{ninelemma} are trivial).
\end{proof}
\section{Monodromy}
\label{monosec}
Given an exact sequence of groups 
\begin{equation}
\label{extension}
1\rightarrow N \rightarrow G \rightarrow B\rightarrow 1,
\end{equation} there is a diagram as follows:
\begin{equation}
\begin{CD}
\label{monocd}
@. 1 @. 1 @. 1 @. \\
@. @VVV @VVV @VVV @.\\
1 @>>>{Z(N)} @>i>> {Z_G(N)}@>{\phi}>> P={Z_G(N)/Z(N)=(N Z_G(N))/N}@>>> 1\\
@. @ViVV @ViVV @ViVV @. \\
1 @>>> N @>i>> G @>{\phi}>> {B} @>>> 1\\
@. @VVV @VVV @VVV @. \\
1 @>>> \Inn(N)  @>i>> {\mathcal{G} < \Aut(N)} @>{\overline{\phi}}>> {\mathcal{M} <\Out(N)} @>>> 1\\
@. @VVV @VVV @VVV @. \\
@. 1 @. 1 @. 1 @. 
\end{CD}
\end{equation}
where $Z(N)$ is the center of $N,$ and $Z_G(N)$ is the centralizer of $N$ in $G.$ The group $G$ acts on the normal subgroup $N$ by conjugation, and $\mathcal{G}$ denotes  the induced subgroup of  $\Aut(N).$ The group $N$ acts on itself by conjugation, and thus gives rise to the subgroup $\Inn(N)<\Aut(N)$ of \emph{inner} automorphisms. The quotient group $\mathcal{M} = \mathcal{G}/\Inn(N)$ is the \emph{monodromy} group of the extension \eqref{extension}.
\begin{remark}
In algebraic geometry, the monodromy representation is often an action on some module (often the cohomology of the space corresponding to $N$), so the situation considered in this section is, in some sense, complementary. 
\end{remark}
\begin{remark}
If the monodromy representation lifts to $\Aut(N),$ then the extension \eqref{extension} is split (that is, there is a subgroup $M < G,$ such that $\phi(M) = B,$ and $\phi$ is an isomoprhism restricted to $B.$) Such a lift obviously always exists if $B$ is a free group.
\end{remark}
\begin{lemma}
\label{monoquotient}
The monodromy group $\mathcal{M}$ is isomorphic to $G/(N Z_G(N)).$
\end{lemma}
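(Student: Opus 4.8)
The plan is to read the result directly off diagram \eqref{monocd} by exhibiting $\mathcal{M}$ as the image of a single composite homomorphism out of $G$ and then computing its kernel, rather than chasing the whole $3\times 3$ array. First I would make the conjugation action explicit: let $c\colon G \to \Aut(N)$ be the homomorphism sending $g$ to the automorphism $x \mapsto gxg^{-1}$ of $N$ (well-defined since $N \trianglelefteq G$). By construction the image of $c$ is $\mathcal{G}$, and its kernel is exactly the centralizer $Z_G(N)$; this is precisely the content of the middle column of \eqref{monocd}, giving $\mathcal{G} \cong G/Z_G(N)$.

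Next I would post-compose $c$ with the canonical projection $\pi\colon \Aut(N) \to \Out(N) = \Aut(N)/\Inn(N)$ to form $\alpha = \pi \circ c\colon G \to \Out(N)$. Since $\operatorname{im}(c) = \mathcal{G}$ and $\mathcal{M} = \mathcal{G}/\Inn(N)$ is by definition the image of $\mathcal{G}$ inside $\Out(N)$, the image of $\alpha$ is exactly $\mathcal{M}$. It then remains only to identify $\ker(\alpha)$: an element $g$ lies in it precisely when $c(g) \in \Inn(N)$, i.e.\ when there is some $n \in N$ with $gxg^{-1} = nxn^{-1}$ for all $x \in N$. This says $n^{-1}g$ centralizes $N$, hence $n^{-1}g \in Z_G(N)$ and $g \in N\,Z_G(N)$; conversely every element of $N\,Z_G(N)$ is visibly carried into $\Inn(N)$ by $c$. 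Thus $\ker(\alpha) = N\,Z_G(N)$, and the first isomorphism theorem yields $\mathcal{M} = \operatorname{im}(\alpha) \cong G/(N\,Z_G(N))$, as asserted.

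I do not expect any genuine obstacle here; the argument is a routine application of the first isomorphism theorem (equivalently, a diagram chase collapsing the middle column and bottom row of \eqref{monocd} via the third isomorphism theorem). The only points deserving an explicit word are the standard verifications that the relevant subgroups make sense: $Z_G(N)$ is normal in $G$ because it is the centralizer of a normal subgroup, and consequently $N\,Z_G(N)$ is a genuine (normal) subgroup of $G$ as a product of two normal subgroups — a fact that is in any case automatic once we recognize $N\,Z_G(N)$ as the kernel of $\alpha$. With those bookkeeping remarks in place the isomorphism is immediate.
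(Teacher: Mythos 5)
Your argument is correct, and it reaches the same isomorphism by a slightly different route than the paper. The paper's proof stays inside the diagrammatic framework it has already set up: it assembles a second nine-lemma diagram with top row $1 \to N \to N Z_G(N) \to P \to 1$ and middle row $1 \to N \to G \to B \to 1$, and reads the lemma off the resulting bottom row $1 \to 1 \to G/(N Z_G(N)) \to \mathcal{M} \to 1$. You instead collapse everything into the single composite $\alpha = \pi \circ c\colon G \to \Out(N)$ and verify by hand that $\ker(\alpha) = N\,Z_G(N)$, invoking only the first isomorphism theorem. What your version buys is self-containedness and transparency: the key computation --- that $c(g) \in \Inn(N)$ if and only if $g \in N\,Z_G(N)$, via writing $g = n \cdot (n^{-1}g)$ with $n^{-1}g \in Z_G(N)$ --- is exactly the content the paper's diagram encodes but never spells out, and your proof does not depend on having verified exactness of the $3\times 3$ array. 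What the paper's version buys is uniformity with diagram \eqref{monocd} and with the nine-lemma formalism used throughout Section \ref{uniqueproof}, which keeps the auxiliary group $P$ visible for later use. Both are complete; there is no gap in your argument.
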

\begin{proof}
From diagram \eqref{monocd} we deduce another nine-lemma diagram:
\[
\begin{CD}
@. 1 @. 1 @. 1 @. \\
@. @VVV @VVV @VVV @.\\
1 @>>>N @>i>>{N Z_G(N)}@>{\phi}>> P@>>> 1\\
@. @ViVV @ViVV @ViVV @. \\
1 @>>> N @>i>> G @>{\phi}>> {B} @>>> 1\\
@. @VVV @VVV @VVV @. \\
1 @>>> 1 @>i>> {G/(N Z_G(N))}  @>{\overline{\phi}}>> {\mathcal{M}} @>>> 1\\
@. @VVV @VVV @VVV @. \\
@. 1 @. 1 @. 1 @. 
\end{CD}
\]
from which the lemma follows immediately.
\end{proof}
\begin{corollary}
\label{monofinite}
If the center of $N$ is trivial, then we have
\begin{equation}
\label{monseq}
1\rightarrow N \times Z_G(N) \rightarrow G \rightarrow {\mathcal M}\rightarrow 1.
\end{equation}
In particular, if the monodromy group ${\mathcal M}$ is finite, $G$ is virtually a direct product with $N$ one of the factors, and the index of the direct product subgroup equals the cardinality of ${\mathcal M}.$ To specialize further, if the monodromy is trivial, $G = N \times Z_G(N).$
\end{corollary}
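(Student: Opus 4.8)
The plan is to leverage Lemma \ref{monoquotient}, which already identifies $\mathcal{M} \cong G/(N Z_G(N))$, and to reduce the entire statement to a single internal-direct-product observation. The only real content is to recognize that the vanishing of $Z(N)$ is precisely what turns the product subgroup $N Z_G(N)$ into a genuine direct product; everything else is formal.

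First I would establish the identity $N \cap Z_G(N) = Z(N)$. An element of $N$ that also lies in the centralizer $Z_G(N)$ is exactly an element of $N$ commuting with every element of $N$, that is, a central element of $N$; conversely every central element of $N$ lies in both. Under the standing hypothesis that $Z(N)$ is trivial, this intersection is therefore trivial. I would also record that $Z_G(N)$ is normal in $G$, since the centralizer of a normal subgroup is normal: for $g \in G$ and $z \in Z_G(N)$ one checks that $gzg^{-1}$ centralizes $N$ using normality of $N$. This is consistent with $N Z_G(N)$ being the kernel appearing in Lemma \ref{monoquotient}.

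Next I would argue that $N Z_G(N) \cong N \times Z_G(N)$ as an internal direct product. By the definition of the centralizer, every element of $Z_G(N)$ commutes with every element of $N$, so the two subgroups commute elementwise; combined with the trivial intersection just established, the product subgroup is the internal direct product $N \times Z_G(N)$. Substituting this into the isomorphism of Lemma \ref{monoquotient} yields the exact sequence \eqref{monseq} immediately. The two specializations then follow by bookkeeping: if $\mathcal{M}$ is finite, the sequence exhibits $N \times Z_G(N)$ as a normal subgroup of $G$ of index $|\mathcal{M}|$ with $N$ a direct factor, so $G$ is virtually such a direct product of the asserted index; if $\mathcal{M}$ is trivial, the sequence forces $G = N Z_G(N) = N \times Z_G(N)$.

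The main (and essentially only) obstacle is the identity $N \cap Z_G(N) = Z(N)$; once it is in hand and combined with elementwise commuting, the direct-product structure and hence the whole corollary is a formal consequence of Lemma \ref{monoquotient}. There is no hard analysis here — the point is simply that triviality of the center is exactly the condition that collapses $N Z_G(N)$ into a direct product, after which the centralizer $Z_G(N)$ plays the role of the complementary factor and $\mathcal{M}$ measures the deviation from $G$ being that product on the nose.
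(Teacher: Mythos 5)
Your proposal is correct and is essentially the argument the paper intends: the corollary is stated without proof precisely because, given Lemma \ref{monoquotient}, the only content is the observation that $N\cap Z_G(N)=Z(N)$ together with elementwise commuting, so that triviality of $Z(N)$ makes $N Z_G(N)$ an internal direct product $N\times Z_G(N)$, after which the exact sequence and both specializations are immediate. Your write-up supplies exactly those details (including the normality of $Z_G(N)$ in $G$) and nothing is missing.
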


If the center of $N$ is trivial, we also have the following fact:
\begin{theorem}[\cite{kenbrowncoho}[IV.6]]
\label{brownthm}
Extensions of $B$ by $N,$ extensions of the form \eqref{extension} are in one-to-one correspondence to the monodromy representations of $B$ to $\Out(N).$
\end{theorem}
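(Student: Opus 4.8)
The plan is to show that the assignment sending an extension to its induced outer action, already constructed in diagram \eqref{monocd}, descends to a bijection on equivalence classes of extensions precisely because $Z(N)$ is trivial. In the general Eilenberg--MacLane theory of extensions with non-abelian kernel, a homomorphism $\omega\colon B\to\Out(N)$ (an ``abstract kernel'') is realized by some extension if and only if an obstruction class in $H^3(B,Z(N))$ vanishes, and when it does, the equivalence classes of extensions inducing $\omega$ form a torsor under $H^2(B,Z(N))$. With $Z(N)=\{1\}$ both of these are cohomology groups with coefficients in the trivial module and hence vanish, giving existence and uniqueness simultaneously. Rather than invoke this machinery, I would give the direct pullback argument below, which makes the role of the hypothesis transparent.

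First I would record the two consequences of $Z(N)=\{1\}$ that do all the work: the conjugation map identifies $N$ with $\Inn(N)$, so the defining sequence becomes $1\to N\to\Aut(N)\xrightarrow{\pi}\Out(N)\to 1$; and, in any extension inducing $\omega$, one has $N\cap Z_G(N)=Z(N)=\{1\}$. Given a monodromy representation $\omega\colon B\to\Out(N)$, I would form the fibre product
\[
G_\omega \;=\; B\times_{\Out(N)}\Aut(N)\;=\;\{(b,\alpha)\in B\times\Aut(N): \omega(b)=\pi(\alpha)\}.
\]
Projection to the first factor is surjective because $\pi$ is, and its kernel is $\{1\}\times\ker\pi=\{1\}\times\Inn(N)\cong N$; thus $1\to N\to G_\omega\to B\to 1$ is an extension, and by construction the outer action it induces is exactly $\omega$. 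This shows every monodromy representation is realized.

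It remains to see that an arbitrary extension $1\to N\xrightarrow{i} G\xrightarrow{\phi} B\to 1$ inducing $\omega$ is equivalent to $G_\omega$. Conjugation gives a homomorphism $\Phi\colon G\to\Aut(N)$, $\Phi(g)=(n\mapsto gng^{-1})$, satisfying $\pi\circ\Phi=\omega\circ\phi$, so $(\phi,\Phi)$ carries $G$ into $G_\omega$. Its kernel is $\ker\phi\cap\ker\Phi=N\cap Z_G(N)=\{1\}$, so it is injective; it is surjective because, given $(b,\alpha)\in G_\omega$ and any $g$ with $\phi(g)=b$, the automorphism $\alpha\,\Phi(g)^{-1}$ is inner, say $\Phi(n)$ with $n\in N$, whence $(\phi,\Phi)(ng)=(b,\alpha)$. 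Since $(\phi,\Phi)$ restricts to the identity on $N$ (sending $i(n)$ to $(1,\Phi(n))$, the image of $n$ in $\Inn(N)$) and covers the identity on $B$, it is an equivalence of extensions, so the map on equivalence classes is a bijection. The only real content is the pair of identities $N\cong\Inn(N)$ and $N\cap Z_G(N)=Z(N)$, and these are where the triviality of the centre is indispensable: they are exactly the inputs that collapse the $H^2$ and $H^3$ obstructions of the general theory to nothing.
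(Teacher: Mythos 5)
Your proof is correct. Note first that the paper itself offers no proof of this statement: it is quoted as a known result with a citation to Brown's book [IV.6], where it appears as the centerless special case of the general Eilenberg--MacLane classification (obstruction in $H^3(B,Z(N))$, torsor under $H^2(B,Z(N))$ -- exactly the general picture the paper records in Remark \ref{morebrown}). What you supply is a complete, self-contained replacement for that citation: the fibre product $G_\omega = B\times_{\Out(N)}\Aut(N)$ realizes every abstract kernel $\omega$, and the map $(\phi,\Phi)$ identifies an arbitrary extension inducing $\omega$ with $G_\omega$, the two uses of $Z(N)=\{1\}$ being $N\cong\Inn(N)$ (so that $\ker\pi$ really is a copy of $N$) and $N\cap Z_G(N)=Z(N)=\{1\}$ (injectivity of $(\phi,\Phi)$). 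All the verifications check out: the kernel of the projection $G_\omega\to B$ is $\{1\}\times\Inn(N)$, conjugation by $(b,\alpha)$ acts on $N$ as $\alpha$ so the induced outer action is $\omega$, and the surjectivity argument via $\alpha\Phi(g)^{-1}\in\Inn(N)$ is right. What your route buys over the citation is that it avoids cohomology entirely and makes visible exactly where the centerless hypothesis enters; what it loses is the uniform framework that also handles nontrivial $Z(N)$. The only point worth adding for completeness is the (one-line) check that equivalent extensions induce the same outer action, so that the forward map is well defined on equivalence classes; and you might note explicitly that ``one-to-one correspondence'' in the statement must be read as a bijection on \emph{equivalence classes} of extensions, as your argument correctly assumes.
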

\begin{remark}
\label{morebrown}
If the center of $N$ is nontrivial, on the one hand there is an obstruction to constructing an extension with the given monodromy representation (the obstruction lies in $H^3(B, Z(N))$), and assuming that an extension with a prescribed monodromy representation $\rho: B \rightarrow \Out(N)$ \emph{does} exist, such representations are classified by $H^2(B, Z(N)).$
\end{remark}

Corollary \ref{monofinite} can be used to show a number of rigidity results on group extensions (as usual, using other people's hard work). 

\begin{theorem}
\label{thefarbs}
Let extension \eqref{extension} be such that $N$ is a nonabelian free group or a surface group, and $B$ is a lattice in a semi-simple Lie group of real rank bigger than $1.$ Then the extension is a virtual direct product.
\end{theorem}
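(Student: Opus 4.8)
The plan is to reduce the whole statement to Corollary \ref{monofinite} and then to import the known higher-rank superrigidity phenomena for mapping class groups and for $\Out(F_k)$. First I would verify the hypothesis of Corollary \ref{monofinite}, namely that $N$ has trivial center. A nonabelian free group has trivial center, and so does the fundamental group of a closed orientable hyperbolic surface of genus $\geq 2$; surfaces with boundary give a free $N$ (already covered), and the sphere and torus are excluded since they yield a trivial or abelian $N$. Hence $Z(N)=\{1\}$ and Corollary \ref{monofinite} applies verbatim, producing the exact sequence
\[
1 \to N \times Z_G(N) \to G \to \mathcal{M} \to 1,
\]
where $\mathcal{M}$ is the monodromy group, i.e. the image of the monodromy representation $\overline{\phi}\colon B \to \Out(N)$ of diagram \eqref{monocd}. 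By the same corollary, once $\mathcal{M}$ is finite, $G$ is virtually the direct product $N \times Z_G(N)$ with index $|\mathcal{M}|$, which is precisely the desired conclusion. Thus the entire theorem collapses to one assertion: the monodromy representation $B \to \Out(N)$ has finite image.

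Second, I would dispose of this finiteness separately in the two cases, in each case by an appeal to higher-rank superrigidity. In the free case $\Out(N)=\Out(F_k)$, and the theorem of Bridson and Wade---every homomorphism from an irreducible lattice in a semisimple Lie group of real rank at least two into $\Out(F_k)$ has finite image---gives $|\mathcal{M}|<\infty$ immediately. In the surface-group case, Dehn--Nielsen--Baer identifies $\Out(N)$ with the extended mapping class group $\mathrm{Mod}^{\pm}(\Sigma_g)$, and the superrigidity theorem of Farb and Masur---homomorphisms from higher-rank lattices into mapping class groups have finite image---does the same job. The orientation-reversing ambiguity is harmless: $\overline{\phi}^{-1}(\mathrm{Mod}(\Sigma_g))$ has index at most two in $B$, is therefore again a higher-rank lattice, maps into $\mathrm{Mod}(\Sigma_g)$ with finite image, and this forces $\overline{\phi}(B)=\mathcal{M}$ to be finite as well. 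Feeding this back into the sequence above completes the argument.

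The hard part is entirely localized in the previous step: the finiteness of the monodromy is the deep input, and it is exactly where the hypothesis of real rank at least two is indispensable---in rank one there are fibrations with infinite monodromy, such as fibered hyperbolic $3$-manifolds in the $N=\mathbb{Z}$ situation and genuine Kodaira fibrations in the surface case. The one bookkeeping point I would be careful about is that the Farb--Masur and Bridson--Wade theorems are stated for \emph{irreducible} lattices, so I would either take $B$ irreducible (the essential case) or, for a reducible $B$, pass to a finite-index sublattice commensurable to a product and argue factor by factor, using that the higher rank survives in the relevant factor. Modulo that routine reduction, the proof is genuinely short: trivial center, then Corollary \ref{monofinite}, then superrigidity.
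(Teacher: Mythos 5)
Your proof follows the paper's argument exactly: reduce via Corollary \ref{monofinite} (using that $N$ is centerless) to the finiteness of the monodromy image in $\Out(N)$, then invoke higher-rank superrigidity --- Farb--Masur for the surface case and a lattice-to-$\Out(F_k)$ finiteness theorem for the free case (you cite Bridson--Wade where the paper cites Bridson--Farb, but this is only a difference in attribution, and your version of the input is if anything the cleaner, fully general statement). Your added care about the trivial-center hypothesis and the irreducibility of $B$ only makes explicit what the paper leaves implicit.
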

\begin{proof} Suppose first that $N$ is a nonabelian free group $F_k.$ The monodromy of the extension is a homomorphic image of $B$ in $\Out(F_k).$ It has been observed by M. Bridson and B. Farb in \cite{bridsonfarb}, using deep work of M. Bestvina, M. Feighn, and M. Handel \cite{titsout}, that a homomorphic image of $B$ in $\Out(F_k)$ is finite, whence the result.

Supose now that $N$ is a fundamental group of a compact surface. B. Farb and H. Masur \cite{farbmasur}, using the very deep results of  V. Kaimanovich and H. Masur \cite{kaimasur}, showed that a homomorphic image of $B$ in $\Out(N)$ is finite, whence the result.
\end{proof}

\begin{theorem}
\label{fujithm}
Let extension \eqref{extension} be such that $N$ is a one-ended centerless word-hyperbolic group, and $B$ is a lattice in a semi-simple lie group of real rank bigger than $1.$ Then, the extension is a virtual direct product.
\end{theorem}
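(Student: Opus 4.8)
The plan is to run the proof of Theorem~\ref{thefarbs} essentially unchanged, the only novelty being a finiteness statement valid for an arbitrary one-ended centerless hyperbolic $N$ that replaces the two special inputs (for free groups and for surface groups) used there. Since $N$ is centerless, Corollary~\ref{monofinite} does all the structural work: the sequence \eqref{monseq} exhibits $N\times Z_G(N)$ as a subgroup of $G$ of index $|\mathcal{M}|$, where $\mathcal{M}\le\Out(N)$ is the monodromy group, i.e.\ the image of $B$ under the monodromy representation $\rho\colon B\to\Out(N)$. Thus $G$ is a virtual direct product (with $N$ a factor) \emph{as soon as $\mathcal{M}$ is finite}, and the entire theorem reduces to the assertion that every homomorphism from a higher-rank lattice $B$ to $\Out(N)$ has finite image.

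To prove that assertion I would exploit the structure theory of $\Out(N)$ for one-ended hyperbolic $N$. Such an $N$ carries a canonical JSJ splitting over virtually cyclic subgroups — the decomposition of Rips–Sela, rendered fully characteristic by Bowditch — and this splitting is preserved by $\Out(N)$ up to the finite automorphism group of its (finite) underlying graph. Passing to the finite-index subgroup $\Out^0(N)$ that fixes the graph, the analysis of Levitt (and Guirardel–Levitt) presents $\Out^0(N)$ through finitely many group extensions built from three kinds of pieces: a finitely generated free abelian group of edge Dehn twists; the mapping class groups of the quadratically-hanging (surface-type) vertex groups; and the outer automorphism groups of the rigid vertex groups, the latter being \emph{finite} because rigid one-ended hyperbolic groups are co-Hopfian (Sela).

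I would then check that $\rho(B)$ meets each of these pieces in a finite subgroup. Its image in the finite quotient $\Out(N)/\Out^0(N)$ is finite trivially. Its image in the abelian twist subgroup factors through the abelianization of $B$, which is finite for a higher-rank lattice by the Margulis normal subgroup theorem \cite{margulisnorm} combined with the Tits alternative, exactly as in the proof of Theorem~\ref{margulislat}. Its image in each mapping class group factor is finite by the theorem of Farb and Masur \cite{farbmasur}, resting on Kaimanovich–Masur \cite{kaimasur} — this is precisely the input already invoked for the surface case of Theorem~\ref{thefarbs}. Since $\mathcal{M}$ is assembled from these finite images by finitely many extensions, and an iterated extension of finite groups is finite, $\mathcal{M}$ is finite, and Corollary~\ref{monofinite} concludes the proof.

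The main obstacle is the middle step: correctly invoking the JSJ/Levitt description of $\Out(N)$ and verifying that the surface (mapping class group) factors are the \emph{only} possible source of infinite image, so that Farb–Masur genuinely suffices. The one-ended hypothesis is exactly what makes the canonical JSJ the right tool — it excludes free-product splittings, for which one would instead be forced back onto the $\Out(F_k)$ rigidity of Bridson–Farb \cite{bridsonfarb} — while the centerless hypothesis is used twice, to legitimize Corollary~\ref{monofinite} and to force the rigid-vertex contributions to $\mathcal{M}$ to be finite. Alternatively, one could bypass the JSJ bookkeeping entirely by citing a direct rigidity statement to the effect that higher-rank lattices have finite image in $\Out(N)$ for one-ended hyperbolic $N$, in which case this theorem follows from Corollary~\ref{monofinite} in one line, exactly as Theorem~\ref{thefarbs} does.
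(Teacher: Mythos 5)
Your reduction is exactly the paper's: since $N$ is centerless, Corollary~\ref{monofinite} turns everything into the single claim that the monodromy image of $B$ in $\Out(N)$ is finite. Where you diverge is in how that claim is established. The paper disposes of it in one line by citing Fujiwara's theorem \cite{kojiout}, which states precisely that a higher-rank lattice has finite image in $\Out(N)$ for $N$ one-ended hyperbolic --- this is the ``alternative'' route you mention in your closing sentence, and it is the one the paper actually takes. Your main argument instead reconstructs, in outline, how such a finiteness theorem is proved: the canonical JSJ splitting of the one-ended hyperbolic group $N$, Levitt's description of $\Out^0(N)$ as built from a twist group, mapping class groups of the quadratically hanging vertex groups, and finite outer automorphism groups of the rigid vertex groups, with Farb--Masur \cite{farbmasur} killing the mapping class group factors and the Margulis normal subgroup theorem killing the (virtually) abelian twist part. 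This is essentially the architecture of Fujiwara's proof, so your route buys self-containedness (and makes visible exactly where one-endedness and centerlessness enter) at the cost of carrying the full JSJ/Levitt machinery, whereas the paper's citation keeps the proof to a sentence. Two small points of care if you pursue your version: the twist group is only \emph{virtually} free abelian, so you should pass to a finite-index sublattice of $B$ (still a higher-rank lattice) before applying the ``virtually abelian quotient of a higher-rank lattice is finite'' step, rather than literally factoring through the abelianization of $B$; and the Farb--Masur input, like the free-group case of Theorem~\ref{thefarbs}, is stated for irreducible lattices, a hypothesis the paper's statements elide.
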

\begin{proof}
It is a theorem of Koji Fujiwara \cite{kojiout} that the homomorphic image of $B$ in $\Out(N)$ is finite, whence the result.
\end{proof}
\begin{remark}
Theorem \ref{fujithm} subsumes the surface case of Theorem \ref{thefarbs}/
\end{remark}

\begin{theorem}
\label{prasadthm}
Let extension \eqref{extension} be such that $N$ is a centerless lattice in a semisimple linear algebraic group of real rank at least two. Then the extension is a virtual direct product.
\end{theorem}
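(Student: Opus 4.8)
The plan is to reuse the template of Theorems \ref{thefarbs} and \ref{fujithm}, but with the rigidity now sitting on the \emph{fiber} $N$ rather than on the base $B$. Indeed, $B$ is completely unconstrained in the hypothesis, so the finiteness of the monodromy cannot come from any property of $B$; it must come from the finiteness of $\Out(N)$ itself.

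First I would note that $N$ is centerless by hypothesis, so Corollary \ref{monofinite} applies directly and places the extension \eqref{extension} in the sequence
\[
1 \rightarrow N \times Z_G(N) \rightarrow G \rightarrow \mathcal{M} \rightarrow 1,
\]
where the monodromy $\mathcal{M} < \Out(N)$ is a homomorphic image of $B$. By the same corollary, $G$ is a virtual direct product (with $N$ one factor, of index $|\mathcal{M}|$) exactly when $\mathcal{M}$ is finite. Since $\mathcal{M}$ is a subgroup of $\Out(N)$ for \emph{any} base $B$, it suffices to prove that $\Out(N)$ is finite.

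To prove $\Out(N)$ finite I would invoke strong rigidity. Writing $N = \Gamma$ as a lattice in the real points $\mathbf{G}(\mathbb{R})$ of the semisimple group, of real rank at least two, the Mostow--Prasad--Margulis strong rigidity theorem guarantees that every automorphism of $\Gamma$ extends to an automorphism of $\mathbf{G}(\mathbb{R})$, yielding an embedding $\Aut(\Gamma) \hookrightarrow \Aut(\mathbf{G}(\mathbb{R}))$ whose image normalizes $\Gamma$. The standard bookkeeping then finishes: intersecting with $\Inn(\mathbf{G}(\mathbb{R}))$ recovers $N_{\mathbf{G}(\mathbb{R})}(\Gamma)$, a discrete overgroup of the lattice $\Gamma = \Inn(\Gamma)$ and hence a finite extension of it, while the quotient injects into the finite group $\Out(\mathbf{G}(\mathbb{R}))$ (governed by the symmetries of the Dynkin diagram). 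Thus $\Out(\Gamma)$ sits between two finite groups and is finite, so $\mathcal{M}$ is finite and Corollary \ref{monofinite} concludes.

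The hard part is the deep ingredient inside this last step: that automorphisms of the lattice $\Gamma$ extend to the ambient Lie group. This is precisely the content of strong rigidity (due to Mostow, Prasad, and Margulis), and once it is granted the remaining passage through $N_{\mathbf{G}(\mathbb{R})}(\Gamma)$ and $\Out(\mathbf{G}(\mathbb{R}))$ is formal. I would expect no further geometric input beyond the rigidity theorem.
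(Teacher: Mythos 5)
Your proposal is correct and follows the same route as the paper: since $N$ is centerless, Corollary \ref{monofinite} reduces everything to the finiteness of the monodromy, which follows from the finiteness of $\Out(N)$ for such lattices --- the paper simply cites this last fact as a theorem of G.~Prasad, whereas you unpack it via strong rigidity, but the argument is the same.
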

\begin{proof}
It is a theorem of Gopal Prasad \cite{prasadout} that such a lattice has finite outer automorphism group, whence the result.
\end{proof}
\begin{theorem}
\label{paulinthm}
Let extension \eqref{extension} be such that $N$ is a centerless hyperbolic group which has Kazhdan's property T, or a relatively hyperbolic group  that does not split along an elementary   subgroup. Then the extension is a virtual direct product.
\end{theorem}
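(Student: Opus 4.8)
The plan is to reuse, almost verbatim in structure, the argument of Theorems \ref{thefarbs}, \ref{fujithm}, and \ref{prasadthm}. First I would attach to the extension \eqref{extension} its monodromy representation $\rho \colon B \to \Out(N)$, with image the monodromy group $\mathcal{M} \le \Out(N)$. The punch line is Corollary \ref{monofinite}: because $N$ is assumed centerless, finiteness of $\mathcal{M}$ immediately yields the exact sequence $1 \to N \times Z_G(N) \to G \to \mathcal{M} \to 1$, which is exactly the assertion that $G$ is a virtual direct product with $N$ as a factor. So the entire content of the theorem reduces to showing that $\mathcal{M}$ is finite. Note that, unlike in the lattice theorems above, the hypotheses here say nothing about $B$; hence I cannot hope to bound the image of $B$ by constraining $B$, and must instead prove the stronger statement that $\Out(N)$ is finite outright.

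To show $\Out(N)$ is finite in the first case --- $N$ a centerless word-hyperbolic group with property T --- the key tool, and the source of the theorem's label, is Paulin's theorem on outer automorphisms of hyperbolic groups. I would argue by contradiction: if $\Out(N)$ were infinite, then, taking a sequence of pairwise distinct outer automorphisms and passing to a Gromov--Hausdorff limit of the correspondingly rescaled actions of $N$ on its Cayley graph, one obtains a nontrivial (fixed-point-free) isometric action of $N$ on an $\mathbb{R}$-tree. This is incompatible with property T, which forces a global fixed point for \emph{every} isometric action of $N$ on an $\mathbb{R}$-tree. The contradiction gives finiteness of $\Out(N)$.

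For the second case --- $N$ relatively hyperbolic and not splitting along an elementary subgroup --- I would run the analogous argument in the relatively hyperbolic category, using the relative version of Paulin's limiting construction together with the (relative) Rips machine. An infinite $\Out(N)$ would again produce a nontrivial action of $N$ on an $\mathbb{R}$-tree, now with elementary arc stabilizers and compatible with the peripheral structure; the Rips machine then converts such an action into an honest splitting of $N$ over an elementary subgroup, contradicting the hypothesis. Hence $\Out(N)$ is finite, $\mathcal{M}$ is \emph{a fortiori} finite, and Corollary \ref{monofinite} finishes the argument.

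The main obstacle lies entirely in establishing finiteness of $\Out(N)$: in each case one must quote a deep theorem guaranteeing that an infinite outer automorphism group of the relevant group forces an $\mathbb{R}$-tree action with controlled (elementary, respectively virtually cyclic) arc stabilizers, and, in the relatively hyperbolic case, that the Rips machine applies relative to the peripheral structure and genuinely yields an elementary splitting. Once finiteness of $\Out(N)$ is in hand, the passage through the monodromy diagram \eqref{monocd} and Corollary \ref{monofinite} is routine bookkeeping, identical to the three preceding theorems.
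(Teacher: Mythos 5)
Your proposal follows the paper's proof exactly: reduce via Corollary \ref{monofinite} to showing the monodromy group is finite, and obtain this from the finiteness of $\Out(N)$ itself, which the paper simply cites from Paulin (hyperbolic case with property T) and Drutu--Sapir (relatively hyperbolic case), whereas you additionally sketch the $\mathbb{R}$-tree limiting arguments behind those citations. The approach and conclusion are the same; correctly noting that no hypothesis on $B$ is available, so finiteness of all of $\Out(N)$ is what must be invoked, matches the paper's intent.
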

\begin{proof}
It is a result of F. Paulin \cite{paulinout} that $\Out(N)$ is finite in the first case, and of C. Drutu and M. Sapir \cite{drutusapirout} in the second case.
\end{proof}
\begin{theorem}
\label{gmtthm}
Let extension \eqref{extension} be such that $N$ is the fundamental group of a closed hyperbolic manifold. Then the extension is a virtual direct product.
\end{theorem}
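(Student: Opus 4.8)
The plan is to reduce the statement to Corollary \ref{monofinite}, exactly as in the proofs of Theorems \ref{thefarbs}, \ref{fujithm}, \ref{prasadthm}, and \ref{paulinthm}. Writing $N = \pi_1(M)$, everything hinges on verifying the two hypotheses of that corollary: that $N$ is centerless, and that the monodromy group $\mathcal{M} < \Out(N)$ appearing in diagram \eqref{monocd} is finite.

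First I would dispose of the center. Since $M$ is a closed hyperbolic manifold, $N$ is a torsion-free, discrete, cocompact, and nonelementary group of isometries of $\mathbb{H}^n$. By Theorem \ref{isomh} such a group is normally insoluble; as the center $Z(N)$ is an abelian (hence solvable) normal subgroup, it must therefore be trivial. This is the routine part.

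The substance of the proof is the finiteness of $\Out(N)$, which I would obtain from Mostow rigidity. For $n = \dim M \geq 3$, Mostow--Prasad rigidity identifies $\Out(N)$ with the isometry group $\operatorname{Isom}(M)$ of the closed hyperbolic manifold $M$, and the latter is finite. Consequently the monodromy group $\mathcal{M}$, being a subgroup of $\Out(N)$, is finite. With both hypotheses in hand, Corollary \ref{monofinite} gives at once that $G$ is virtually a direct product having $N$ as a factor, the index being the cardinality of $\mathcal{M}$; this is precisely the assertion that the extension is a virtual direct product.

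The point where care is needed is the dimension. Mostow rigidity fails for $n = 2$: there $\Out(N)$ is the mapping class group of a closed surface, which is infinite, so the argument above breaks down and the conclusion requires instead the hypotheses on $B$ used in Theorems \ref{thefarbs} and \ref{fujithm}. I therefore read the statement as implicitly assuming $\dim M \geq 3$, where the appeal to Mostow rigidity is clean and imposes no condition on $B$; everything else is bookkeeping around the nine-lemma diagram \eqref{monocd}. I expect this single input --- the finiteness of $\Out(N)$ --- to be the crux, with all remaining steps being formal.
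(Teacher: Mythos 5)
Your proof follows essentially the same route as the paper's: finiteness of $\Out(N)$ via Mostow rigidity (the paper additionally cites Gabai--Meyerhoff--Thurston for the three-dimensional case), triviality of $Z(N)$, and then Corollary \ref{monofinite}; your derivation of centerlessness from Theorem \ref{isomh} is a harmless variant of the paper's observation that a torsion-free word-hyperbolic group has trivial center (else it would contain $\mathbb{Z}\times\mathbb{Z}$). Your caveat about dimension two is well taken --- the paper states the theorem without excluding closed hyperbolic surfaces, for which $\Out(N)$ is the (infinite) extended mapping class group, so the implicit assumption $\dim M \geq 3$ that you identify is indeed needed.
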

\begin{proof}
It follows from the Mostow Rigidity Theorem and the results of D. Gabai, R. Meyerhoff, and N. Thurston \cite{gmt} that $\Out(N)$ is finite. On the other hand, any torsion-free word-hyperbolic group has trivial center (since if the center were non-trivial, it would contain at least a $\mathbb{Z},$  which, together with a noncentral cyclic subgroup would generate a $\mathbb{Z} \times \mathbb{Z}.$
\end{proof}
\subsection{Monodromy and signature}
\label{monosig}
The results in this section are classical. The first one is due to S. S. Chern, F. Hirzebruch and J-P. Serre:

\begin{theorem}[\cite{chs}]
\label{sigmult}
Let $F \rightarrow E \rightarrow B$ be a fiber bundle such that:
\begin{itemize}
\item The spaces $E, B, F$ are compact oriented manifolds, with compatible orientations.
\item $\pi_1(B)$ acts trivially on $H^*(F).$
\end{itemize}
Then signatures multiply: $\sigma(E) = \sigma(B) \sigma(F).$
\end{theorem}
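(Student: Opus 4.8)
The plan is to compute $\sigma(E)$ through the multiplicative Leray--Serre spectral sequence of the fibration and to track the cup-product pairing through its pages. With real coefficients the triviality of the $\pi_1(B)$-action gives the $E_2$-page as the bigraded tensor product $E_2^{p,q} = H^p(B;\reals)\otimes H^q(F;\reals)$, with ring structure the graded tensor product of $H^*(B;\reals)$ and $H^*(F;\reals)$. Since $\reals$ is a field the spectral sequence converges and $E_\infty$ is the associated graded of a decreasing filtration $F^\bullet$ on $H^*(E;\reals)$ indexed by base degree $0 \le p \le b := \dim B$; write $n = \dim E = b + \dim F$.

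First I would isolate the linear-algebra input. The signature of a compact oriented $n$-manifold $M$ is visible on the full symmetric form on $H^*(M;\reals)$ given by $(\alpha,\beta)\mapsto \langle \alpha\cup\beta, [M]\rangle$: for $i\neq n/2$ the block pairing $H^i\otimes H^{n-i}$ is the perfect pairing of a space with its dual and so is hyperbolic of signature $0$, leaving only the middle intersection form, so the signature of this graded form equals $\sigma(M)$ (and vanishes unless $4\mid n$). The engine is then the elementary lemma: if $(V,\langle\,,\rangle)$ is a finite-dimensional real space with a nondegenerate symmetric form and $d\colon V\to V$ has $d^2=0$ and is self-adjoint up to sign, then $\operatorname{im} d$ is isotropic inside $\ker d$ and the induced form on $H=\ker d/\operatorname{im} d$ is nondegenerate of the \emph{same} signature as $\langle\,,\rangle$; there is an analogous statement for a self-dual filtration and its associated graded.

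Next I would check the two geometric facts needed to run the lemma. Orientation compatibility makes $[E]$ dual to the top class $[B]\otimes[F]$ of $E_\infty$, and Poincar\'e duality then shows the filtration is self-dual: the annihilator of $F^p$ under the cup pairing is $F^{b+1-p}$. By the filtered lemma, $\sigma(E)$ equals the signature of the induced pairing on $E_\infty = \bigoplus_p F^p/F^{p+1}$. Each differential $d_r$ is a derivation, so applying the Leibniz rule to a product landing in the one-dimensional top piece $E_r^{b,f}$ (where $d_r$ vanishes) gives $\langle d_r x, y\rangle = \mp\langle x, d_r y\rangle$; thus $d_r$ is self-adjoint up to sign for the pairing on $E_r$, and passing to $E_{r+1}=H(E_r,d_r)$ preserves the signature. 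Hence $\sigma(E)=\sigma(E_\infty)=\sigma(E_2)$.

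Finally the pairing on $E_2 = H^*(B;\reals)\otimes H^*(F;\reals)$ is the (Koszul-signed) tensor product of the graded cup-product pairings of $B$ and $F$, and the signature of a tensor product of two graded symmetric forms is the product of their signatures; this yields $\sigma(E_2)=\sigma(B)\,\sigma(F)$ and simultaneously explains why both sides vanish unless $\dim B$ and $\dim F$ are each divisible by $4$. The main obstacle is the sign bookkeeping: one must verify that, under the chosen Poincar\'e-duality and spectral-sequence conventions, the differentials $d_r$ really are $\pm$-self-adjoint and that the graded tensor product multiplies signatures with the correct sign, since a sign error would break both the invariance of the signature under each $d_r$ and the final tensor-product identity.
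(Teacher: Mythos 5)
The paper gives no proof of this theorem, simply citing Chern--Hirzebruch--Serre, and your outline is precisely their original argument: real-coefficient Leray--Serre spectral sequence, invariance of the signature of the cup-product pairing under each page (via the isotropic-subspace lemma and the $\pm$-self-adjointness of the derivations $d_r$), compatibility of the filtration with Poincar\'e duality, and the tensor-product signature formula on $E_2$. Your plan is correct and identifies the right lemmas, including the genuine sign subtleties (e.g.\ the skew $\otimes$ skew case of the middle pairing) that must be checked to complete it.
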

\begin{corollary}
\label{sigmultcor}
If $F, B$ are surfaces, and the hypotheses of Theorem \ref{sigmult} are satisfied, then $\sigma(E) = 0.$
\end{corollary}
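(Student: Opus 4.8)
The plan is to reduce everything to a dimension count, since Theorem \ref{sigmult} already does all the real work. The key observation I would invoke is that the signature of a closed oriented manifold $M$ is defined as the signature of the intersection form on the middle-dimensional cohomology $H^{\dim M/2}(M;\reals)$, and that this cup-product pairing is \emph{symmetric} only when $\dim M \equiv 0 \pmod 4$. When $\dim M \equiv 2 \pmod 4$ the pairing on the middle cohomology is skew-symmetric, so there is no well-defined signature in the usual sense and one sets $\sigma(M) = 0$ by convention; in odd dimensions there is no middle pairing at all and the signature is likewise zero.

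First I would record that a surface is a compact oriented manifold of real dimension $2$, and that $2 \equiv 2 \pmod 4$. Hence, by the remark above, $\sigma(B) = \sigma(F) = 0$, purely for parity reasons, regardless of which surfaces $B$ and $F$ happen to be. Next I would observe that the hypotheses of Theorem \ref{sigmult} are assumed to hold ($E, B, F$ are compact oriented with compatible orientations, and $\pi_1(B)$ acts trivially on $H^*(F)$), so that the multiplicativity of the signature applies directly, yielding
\[
\sigma(E) = \sigma(B)\,\sigma(F) = 0 \cdot 0 = 0.
\]

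I do not expect any genuine obstacle here, as the corollary is an immediate numerical consequence of the theorem it follows. The only point worth emphasizing, to see that the statement is not vacuous, is that the total space $E$ is a $4$-manifold (since $\dim E = \dim B + \dim F = 4$), so its signature is a priori a potentially nonzero invariant; the content of the corollary is precisely that the fibered product structure, together with the trivial monodromy on $H^*(F)$, forces this invariant to vanish because the two-dimensional base and fiber each contribute a zero factor.
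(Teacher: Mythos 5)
Your proposal is correct and takes essentially the same route as the paper: the paper's entire proof is the one-line observation that the signature of a two-dimensional manifold is defined to be $0$, after which multiplicativity gives $\sigma(E) = \sigma(B)\sigma(F) = 0$. Your additional explanation of \emph{why} the convention sets $\sigma = 0$ in dimensions congruent to $2 \bmod 4$ (skew-symmetry of the middle cup-product pairing) is accurate but not needed beyond what the paper records.
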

\begin{proof}
The signature of a two-dimensional manifold is defined to be $0.$
\end{proof}
In the below we use the term monodromy in the algebro-geometric sense, that is, we compose the monodromy representation as above with the action on the abelianization of the kernel.
\begin{corollary}[\cite{kot}]
\label{kotcor}
If we have a surface bundle over a surface such that the monodromy action on homology is \emph{finite}, then the signature of the total space is $0,$ that is, $\sigma(E) = 0.$
\end{corollary}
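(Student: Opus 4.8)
The plan is to reduce to the trivial-monodromy case already handled by Corollary \ref{sigmultcor}, by passing to a finite cover of the base. Write the bundle as $F \to E \to B$ with monodromy representation $\rho \colon \pi_1(B) \to \Aut(H^*(F))$. Since $F$ is a connected oriented surface, the induced action on $H^0(F)$ and on $H^2(F)$ is automatically trivial, so the hypothesis that the monodromy on homology is finite is precisely the statement that $\rho$ has finite image, say of order $n$.

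First I would form the kernel $\Gamma = \ker \rho$, a normal subgroup of index $n$ in $\pi_1(B)$, and let $\pi \colon \tilde B \to B$ be the corresponding degree-$n$ regular cover; a cover of a closed oriented surface is again a closed oriented surface. Pulling back the bundle along $\pi$ yields a surface bundle $F \to \tilde E \to \tilde B$ whose monodromy representation is the restriction of $\rho$ to $\pi_1(\tilde B) = \Gamma$, and is therefore trivial. Hence $\pi_1(\tilde B)$ acts trivially on $H^*(F)$, so Theorem \ref{sigmult} applies, and since both base and fiber are surfaces, Corollary \ref{sigmultcor} gives $\sigma(\tilde E) = 0$.

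The second ingredient is that the pullback square exhibits $q\colon \tilde E \to E$ as a degree-$n$ covering of the closed oriented $4$-manifold $E$: over a point $e \in E$ the fiber of $q$ is the $n$-element set $\pi^{-1}(p(e))$, where $p \colon E \to B$ is the bundle projection. I would then invoke the multiplicativity of the signature under finite coverings to conclude $\sigma(\tilde E) = n\,\sigma(E)$. This follows from Hirzebruch's signature theorem: the $L$-class pulls back under $q$, and $q_*[\tilde E] = n\,[E]$ for a degree-$n$ map, so $\sigma(\tilde E) = \langle q^*L(E), [\tilde E]\rangle = \langle L(E), q_*[\tilde E]\rangle = n\,\sigma(E)$. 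Combining with $\sigma(\tilde E) = 0$ and $n \geq 1$ forces $\sigma(E) = 0$.

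The one point requiring care, and the step I would single out as the main (though standard) obstacle, is the multiplicativity $\sigma(\tilde E) = n\,\sigma(E)$; everything else is bookkeeping with pullbacks of coverings together with the already-established Chern--Hirzebruch--Serre result. In particular one must take the orientation on $\tilde E$ to be the one pulled back from $E$, which makes it compatible with that of $\tilde B$ and of the fiber, so that the Hirzebruch formula is applicable on both $E$ and $\tilde E$ simultaneously.
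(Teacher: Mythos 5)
Your proposal is correct and follows exactly the paper's (very terse) proof: pull the bundle back to the finite cover of $B$ corresponding to the kernel of the monodromy representation, apply the Chern--Hirzebruch--Serre multiplicativity (Corollary \ref{sigmultcor}) to get $\sigma(\tilde E)=0$, and then use multiplicativity of the signature under finite coverings to conclude $\sigma(E)=0$. You have merely filled in the details (the Hirzebruch $L$-class argument for $\sigma(\tilde E)=n\,\sigma(E)$) that the paper leaves implicit.
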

\begin{proof} We pull back the bundle to the finite covering of $B$ corresponding to the kernel of the monodromy representation, and use the multiplicativity of signature.
\end{proof}
\begin{remark} It is a result of Morita \cite{moritabd} that the conclusion of Corollary \ref{kotcor} holds also when the image of the monodromy representation is amenable.
\end{remark}
\bibliographystyle{plain}
\bibliography{CDtest}
\end{document}